\numberwithin{equation}{section}
\newtheorem{theorem}{Theorem}
\newtheorem{proposition}[theorem]{Proposition}
\newtheorem{definition}[theorem]{Definition}
\newtheorem{remark}[theorem]{Remark}
\newtheorem{example}[theorem]{Example}
\newtheorem{corollary}[theorem]{Corollary}
\numberwithin{equation}{section}
\numberwithin{theorem}{section}
\renewcommand{\hat}{\widehat}
\newcommand{\vertiii}[1]{{\left\vert\kern-0.25ex\left\vert\kern-0.25ex\left\vert #1 
\right\vert\kern-0.25ex\right\vert\kern-0.25ex\right\vert}}
\newcommand{\defeq}{:=}
\newcommand{\norm}[1]{\| #1\|}
\newcommand{\abs}[1]{\left|#1\right|}
\newcommand{\eps}{\varepsilon}
\renewcommand{\leq}{\leqslant}
\renewcommand{\geq}{\geqslant}
\renewcommand{\phi}{\varphi}
\let\sp\relax
\newcommand{\sp}[1]{\left\langle #1 \right\rangle}
\newcommand{\st}{\,:\,}
\let\oldsum\sum
\renewcommand{\sum}{\oldsum\nolimits}
\renewcommand{\d}{\,\mathrm{d}}
\let\span\relax
\DeclareMathOperator{\span}{span}
\newcommand{\cl}[1]{{\overline{#1}}}
\DeclareMathOperator{\M}{\mathcal M}
\DeclareMathOperator{\Mg}{\mathfrak M}
\DeclareMathOperator{\Lip}{Lip}
\DeclareMathOperator{\lip}{\mathit{lip}}
\newcommand{\R}{\mathbb R}
\newcommand{\N}{\mathbb N}
\newcommand{\predual}[1]{{#1}^\diamond}
\newcommand{\projtenprod}[2]{#1 \; \hat\otimes_\pi #2}
\newcommand{\symprojtenprod}[1]{
\hat\otimes^{s}_{\pi_s} #1}
\newcommand{\injtenprod}[2]{#1 \, \hat\otimes_\eps \, #2}
\newcommand{\syminjtenprod}[1]{\hat\otimes^{s}_{\eps_s} #1}
\newcommand{\weakstarcomp}[1]{{#1}_{\ast}}
\let\P\relax
\newcommand{\P}{\mathcal P}
\newcommand{\cov}{{\mathcal C}}
\newcommand{\calL}{{\mathcal L}}
\newcommand{\calN}{{\mathcal N}}
\newcommand{\mean}{\mathbb E}
\let\RN\relax
\newcommand{\RN}{Radon-Nikod\`ym}
\begin{document}

\title{Gaussian random fields on non-separable Banach spaces}

\author{Yury Korolev\footnote{Centre for Mathematical Sciences, University of Cambridge, Wilberforce Road, CB3 0WA, Cambridge, UK. \texttt{email:\{yk362,cbs31\}@cam.ac.uk}} \and Jonas Latz\footnote{Maxwell Institute for Mathematical Sciences \& School of Mathematical and Computer Sciences, Heriot-Watt University, EH14 4AS, Edinburgh, UK. \texttt{email:j.latz@hw.ac.uk}} \and Carola-Bibiane Sch\"onlieb\footnotemark[1]}

%\authorrunning{Short form of author list} % if too long for running head

\date{}
% \date{\footnotesize Version: \today}
% The correct dates will be entered by the editor

\maketitle

\begin{abstract}
We study Gaussian random fields on certain Banach spaces and investigate conditions for their existence.  Our results apply inter alia to spaces of Radon measures and H\"older functions. In the former case, we are able to define Gaussian white noise on the space of measures directly, avoiding, e.g., an embedding into a negative-order Sobolev space. In the latter case, we demonstrate how H\"older regularity of the samples is controlled by that of the covariance kernel and, thus, show a connection to the Theorem of Kolmogorov-Chentsov.
\end{abstract}
\noindent\textbf{Keywords: }{Gaussian measures, sample regularity, Radon measures, H\"older spaces, Besov spaces, tensor products of Banach spaces}

% \PACS{PACS code1 \and PACS code2 \and more}
\noindent\textbf{MSC2020: }{60G15, 46N30, 46B26.}

\section{Introduction}
Function-valued Gaussian random variables play a fundamental role in various fields of mathematics, e.g.,  non-parametric statistics \cite{Stuart2010}, stochastic partial differential equations \cite{Hairer:2009}, and function approximation \cite{Stuart18}. We distinguish two kinds of function-valued Gaussian random variables: \emph{Gaussian processes}, which are families of Gaussian random variables with a (general) index set, and \emph{Gaussian random fields}, which are Gaussian random variables defined on certain structured function spaces, equipped with a space of continuous linear functionals. We give rigorous definitions below. Which concept is used depends very much on the field of study: Gaussian processes are well-understood in terms of classical regularity of samples and popular in certain applications, e.g., data science. Gaussian random fields allow one to study random functions from a functional analytic perspective, simplifying, e.g.,  the investigation of conditional distributions and stochastic partial differential equations. So far, the Gaussian random field theory is mainly developed on separable Hilbert spaces that often do not allow to study classical regularity of samples. 
In this work, we aim at closing this gap between the two concepts through investigating Gaussian random fields on certain Banach spaces. We prove their existence under assumptions on the covariance operators and discuss their  construction. In particular, we consider H\"older spaces and spaces of Radon measures. In the former, we are able to investigate classical regularity. From the latter, we obtain a simple theory for Gaussian white noise.

\paragraph{Background.} 
Let $(Y, \mathcal{F}, \mathbb{P})$ be the probability space on which we define random variables throughout this work and $\Omega \subset \mathbb{R}^n$  be some compact set. $\theta$ is a function-valued Gaussian random variable, say $\theta$ is a randomised function of type $f:\Omega \rightarrow \mathbb{R}$. We now give two different definitions, or really frameworks, of such Gaussian random variables that are common in the literature. 

We commence with the \emph{(Gaussian/stochastic) process viewpoint}. Here, $\theta := (\theta(x))_{x \in \Omega}$ is a collection of scalar random variables. That means, $\theta$ is a random element in $\mathbb{R}^\Omega := \{f: \Omega \rightarrow \mathbb{R} \}$ equipped with the cylindrical $\sigma$-algebra.

\begin{definition} \label{def:ürocess}
Let $m: \Omega \rightarrow \mathbb{R}$ be a  function and $c: \Omega \times \Omega \rightarrow \mathbb{R}$ be a continuous, symmetric, positive semi-definite function. We refer to $\theta$ as a \emph{Gaussian process with mean $m$ and covariance $c$}, if for any $k \in \mathbb{N} := \{1,2,\ldots\}$ and any set of points $x_1,...,x_k \in \Omega$, we have
$$
\begin{pmatrix}
\theta(x_1) \\
\vdots \\
\theta(x_k)
\end{pmatrix}
\sim \mathrm{N}\left(\begin{pmatrix}
m(x_1) \\
\vdots \\
m(x_k)
\end{pmatrix}, \begin{pmatrix}
c(x_1, x_1) & \cdots & c(x_1, x_k) \\
\vdots & \ddots & \vdots \\
c(x_k,x_1) & \cdots & c(x_k, x_k)
\end{pmatrix} \right),
$$
where for appropriate $m' \in \mathbb{R}^k, C' \in \mathbb{R}^{k \times k}$, we use $\mathrm{N}(m', C')$ to denote multivariate Gaussian distributions on $(\mathbb{R}^k, \mathcal{B}\mathbb{R}^k)$.
\end{definition}
 One can show existence of such processes through the Kolmogorov extension theorem, see, e.g., Theorem 14.16 in \cite{Klenke2014}. Important properties of Gaussian processes are their regularity, especially continuity and H\"{o}lder continuity of the samples. To this end, first note that another stochastic process $\tilde \theta := (\tilde \theta(x))_{x \in \Omega}$ is a \emph{modification} of $\theta$, if $\mathbb{P}(\theta(x) = \tilde\theta(x)) = 1$ for all $x \in \Omega$. The Theorem of Kolmogorov-Chentsov (e.g., Theorem 4.23 in \cite{Kallenberg}) discusses the existence of regular modifications of a stochastic processes:
 \begin{theorem}[Kolmogorov--Chentsov] \label{thm:kolchentsov}
 Let $a,b,c > 0$ be some constants such that
 $$
 \mathbb{E}[\|\theta(x)- \theta(y)\|^a] \leq c \|x-y \|^{n+b}.
 $$
 Then, there is a modification $\tilde\theta$ of $\theta$ that is $\alpha$-H\"{o}lder continuous with $\alpha \in (0, b/a)$.
 \end{theorem}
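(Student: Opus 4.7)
The plan is to follow the classical dyadic-chaining argument. Without loss of generality assume $\Omega \subset [0,1]^n$ (by translation and rescaling, which only modifies the constant $c$). For each $k \in \mathbb{N}$ set $D_k = \Omega \cap 2^{-k}\mathbb{Z}^n$ and $D = \bigcup_k D_k$, and call $x, y \in D_k$ neighbouring if $\max_i |x_i - y_i| = 2^{-k}$; there are at most $C\, 2^{kn}$ such pairs at scale $k$. Fixing $\alpha \in (0, b/a)$, Markov's inequality applied to the hypothesis yields for every neighbouring pair in $D_k$
$$\mathbb{P}\bigl(\|\theta(x) - \theta(y)\| \geq 2^{-\alpha k}\bigr) \leq c\, 2^{\alpha a k} \|x-y\|^{n+b} \leq c\, 2^{-k(n + b - \alpha a)}.$$
Summing over the $O(2^{kn})$ neighbouring pairs gives $O(2^{-k(b-\alpha a)})$, which is summable in $k$ because $b - \alpha a > 0$.

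Borel--Cantelli then produces an event $A$ with $\mathbb{P}(A)=1$ on which there is a (random) $k_0(\omega)$ such that $\|\theta(x)-\theta(y)\| \leq 2^{-\alpha k}$ for every $k \geq k_0$ and every neighbouring pair in $D_k$. A standard chaining argument connects any two points $x, y \in D$ with $\|x-y\|$ small by a path of neighbouring dyadic steps at scales descending from $k_\ast \asymp -\log_2 \|x-y\|$; telescoping along the chain and summing the geometric series $\sum_{k \geq k_\ast} 2^{-\alpha k}$ produces a bound $\|\theta(x) - \theta(y)\| \leq K(\omega)\, \|x-y\|^{\alpha}$ valid for \emph{all} pairs in $D$, with $K(\omega)$ finite on $A$.

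Because $D$ is dense in $\Omega$, on $A$ the restriction $\theta|_D$ is uniformly continuous and extends to an $\alpha$-H\"older function $\tilde\theta : \Omega \to \mathbb{R}$; off $A$ set $\tilde\theta \equiv 0$. To verify $\tilde\theta$ is a modification, observe that the moment bound gives continuity of $\theta$ in $L^a$ and hence in probability. Fixing $x \in \Omega$ and a sequence $D \ni x_m \to x$, we have $\theta(x_m) \to \theta(x)$ in probability, while $\theta(x_m) \to \tilde\theta(x)$ almost surely on $A$; uniqueness of the limit in probability forces $\mathbb{P}(\theta(x) = \tilde\theta(x)) = 1$.

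The main obstacle is making the chaining estimate uniform on $A$: one must choose the path so that its length is bounded by a geometric sum whose leading term is controlled by the dyadic scale closest to $\|x-y\|$, and one must handle the asymmetry when $x, y$ happen to lie in distinct $D_k$'s by first snapping each of them to a common sufficiently fine grid. Once the chaining constant $K(\omega)$ is seen to depend only on $k_0(\omega)$ and the summable tail, the H\"older regularity and hence the extension are immediate.
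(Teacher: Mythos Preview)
The paper does not actually prove \cref{thm:kolchentsov}; it merely quotes the statement and refers the reader to \cite[Thm.~4.23]{Kallenberg}. There is therefore no in-paper proof to compare against. Your argument is the standard dyadic-chaining proof of Kolmogorov--Chentsov and is correct: the Markov/Borel--Cantelli step controls increments along neighbouring dyadic pairs, chaining upgrades this to a uniform H\"older bound on the countable dense set $D$, and the $L^a$-continuity implied by the moment hypothesis identifies the continuous extension as a modification. One minor caveat: you assume $D = \bigcup_k (\Omega \cap 2^{-k}\mathbb{Z}^n)$ is dense in the compact set $\Omega$, which is automatic for cubes but can fail for pathological compacta; in the general case one simply works on a cube containing $\Omega$ and restricts at the end, so this is harmless.
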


Bogachev~\cite{Bogachev:2007} gives the following definition of a Gaussian measure, which we refer to as the \emph{(Gaussian) random field view point}.
\begin{definition} \label{def:Gauss_boga}
Let $E$ be a locally convex space and $F$ be a space containing continuous linear functionals on $E$. Moreover, let $\langle \cdot, \cdot \rangle_{F,E}$ be the bilinear pairing of $E$ and $F$.  A random variable $\theta: Y \rightarrow E$ is a \emph{Gaussian random field}, if for any $k \in \mathbb{N}$ and $\ell_1,\ldots, \ell_k \in F$ there are $m' \in \mathbb{R}^k$ and $C' \in \mathbb{R}^{k \times k}$ such that
\begin{equation}\label{eq:Gauss}
 \begin{pmatrix} \langle \ell_1, \theta \rangle_{F,E} \\ \vdots \\ \langle \ell_k, \theta \rangle_{F,E} \end{pmatrix} \sim \mathrm{N}(a,C).
\end{equation}
\end{definition}
\begin{remark}
The Gaussian process viewpoint is essentially contained in the random field view point, where $E := \mathbb{R}^\Omega$ and $F$ containing point evaluations of functions in $E$. To simplify the discussion, we still distinguish the two 
\end{remark}
As mentioned before, the Kolmogorov extension theorem gives us a simple way to show existence of a Gaussian process with a certain covariance function. Showing existence of Gaussian random fields is more involved: Indeed, Bogachev~\cite{Bogachev:2007} discusses existence only in the case where $E$ is a separable Hilbert space and $F \cong E$ is its (isomorphic) dual; see also~\cref{sec:Hilbert}. 

We have already mentioned advantages and disadvantages of the different view points.
In this article, we extend the  treatment of Definition~\ref{def:Gauss_boga} to the case where $E$ is a Banach space, possibly non-separable, that has a predual with a basis, and $F \subseteq E^*$ is contained in the dual of $E$. Our results apply inter alia to spaces of Radon measures and H\"{o}lder continuous functions. By studying Gaussian random fields on H\"older spaces we hope to close the gap in between the theories of Gaussian random fields and Gaussian processes by allowing us to study classical regularity of function-valued Gaussian random variables on a structured space without the necessity of continuous versions. Gaussian random fields on Radon spaces gives us a very simple and natural path to the definition of Gaussian white noise.

\paragraph{Selected previous work.} Regularity of Gaussian random variables on function spaces has been an extensive field of study. For further reading, we refer to the works by Adler \cite{Adler2007,Adler2010} and Potthoff \cite{Potthoff1,Potthoff2,Potthoff3}. From an application point of view, we refer to the books by Sullivan \cite{Sullivan2015} and Lord, Powell, and Shardlow \cite{Lord2014}. 
Gaussian white  noise has no spatial correlation and fits 
neither into the setting of Definition~\ref{def:ürocess} nor the seperable Hilbert space setting in Definition~\ref{def:Gauss_boga}. It is treated  as  a `generalised random field'  and, e.g., discussed in the book by Kuo \cite{kuo:1996}.

\paragraph{Our contributions.} Our main contributions are as follows
\begin{itemize}[noitemsep]
    \item we  define Gaussian random fields on dual Banach spaces, bridging the gap between the Gaussian process and Gaussian random field point of view;  
    \item we study random fields on  H\"older spaces and show how H\"older regularity of the samples is controlled by the H\"older regularity of the covariance kernel;
    \item we study random fields on the space of Radon measures and define white noise on the space of measures.
\end{itemize}

\paragraph{Structure of the paper.}
The paper is organised as follows. In \cref{sec:Hilbert}, we outline the random field view point for Hilbert-space valued random variables following~\cite{Bogachev:2007}. Then we present a generalisation of this framework to random variables with values in a dual Banach space. This is the topic of \cref{sec:spaces-with-preduals}. In \cref{sec:relevant-spaces}, we discuss some spaces where the theory can be applied. \cref{sec:measures} is concerned with sampling Radon measures. Here we show, for example, how white noise can be defined on the space of measures. In \cref{sec:Lipschitz}, we consider sampling H\"{o}lder functions and obtain results on the regularity of samples generated by exponential covariance kernels. In \cref{sec:cont}, we  briefly discuss how sampling continuous functions may fit into this framework. 
Necessary results about tensor products of Banach spaces are collected in \cref{app:tensors}. 

\paragraph{Notation.}  Generic Banach spaces will be denoted by $E$ or $F$. The injective and projective tensor products will be denoted by $\injtenprod{E}{F}$ and $\projtenprod{E}{F}$, respectively. Symmetric products will be denoted by $\syminjtenprod{E}$ and $\symprojtenprod{E}$. We will use $(X, d)$ or simply $X$ for a metric space with metric $d$. All our metric spaces are assumed compact. 
If $0<\alpha<1$ and $(X, d)$ is a metric space, we will denote by $X^\alpha$ the space $(X, d^\alpha)$, which is, of course, also a metric space.
We will use $\Omega \subset \R^n$ for a domain in $\R^n$, which will be assumed compact. The Borel $\sigma$-algebra on $\Omega$ will be denoted by $\mathcal B\Omega$. We will use $x,x' \in \Omega$ for spatial variables, $f,g \colon \Omega \to \R$ for generic functions and $\mu,\nu \colon \mathcal B\Omega \to \R$ or $\mathcal B\Omega \to E$ for generic measures.
The space of Lipschitz functions on $\Omega$ will be denoted by $\Lip(\Omega)$. The subspace of functions vanishing at a basepoint will be $\Lip_0(\Omega)$. Its predual Arens-Eells space~\cite{weaver:2018} will be denoted by $\AE(\Omega)$. We will use $\M(\Omega)$ for the space of scalar-valued Radon measures, $\Mg(\Omega,E)$ for the space of $E$-valued measures, and $\Mg_1(\Omega,E)$ for the space of $E$-valued measures with the Radon-Nikod\'ym property.
The Banach space where we would like to sample will be called $U$. Its dual will be denoted by $U^*$. We will also assume that $U$ has a predual, $\predual{U}$. We will assume that this predual has a basis.  
The elements of these spaces will be $\eta,\eta' \in \predual{U}$, $u,u' \in U$, and $v,v' \in U^*$. By the Banach-Alaoglu theorem, the weak* topology on bounded sets in $U$ is metrisable. We will use the symbol $\Delta_*(\cdot,\cdot)$ to denote this ``weak* metric". The completion of $U$ with respect to this metric will be denoted by $\weakstarcomp{U}$. 
Samples will be denoted by $\theta$.
A covariance kernel will be denoted by $c$ and we will assume that $c \in \symprojtenprod{U}$. This kernel corresponds to a nuclear covariance
operator $\cov \colon \predual{U} \to U$. 
The standard normal distribution will be denoted by $\mathrm N(0,1)$ and a scalar sample from it by $\xi \sim  \mathrm N(0,1)$.

%%%%%%%%%%%%
\section{Gaussian random fields on Hilbert spaces}\label{sec:Hilbert}

We outline the basic Hilbert space setting following~\cite{Bogachev:2007}. For illustrative purposes, we will restrict ourselves to the case $H=L^2(\Omega)$, where $\Omega \subset \R^n$ is a compact set. Let $c \in C(\Omega \times \Omega) $ be a covariance kernel, i.e. a continuous, symmetric, and positive semidefinite function $\Omega \times \Omega \to \R$. We consider the following integral operator $\mathcal{C}: L^2(\Omega) \rightarrow L^2(\Omega)$
\begin{equation}\label{eq:covariance-Hilbert}
\cov f \defeq \int_\Omega f(x) c(x,\cdot) \d x, \quad f \in L^2(\Omega),
\end{equation}
which is by definition self-adjoint and positive semidefinite. It can also be shown that it is nuclear.

\begin{definition}[Nuclear operators on Hilbert spaces]
Let $E$ be a separable Hilbert space and $N \colon E \to E$ a compact positive semidefinite self-adjoint operator with eigenvalues $\{\lambda_i\}_{i \in\N} \subset \R_+$. The operator $N$ is called nuclear if
\begin{equation*}
    \norm{N}_{\calN} \defeq \sum_{i=1}^\infty \lambda_i < \infty.
\end{equation*}
\end{definition}

\begin{theorem}[Mercer's theorem, {\cite{Mercer1909}}]\label{thm:mercer}
Let $\Omega \subset \R^d$ be compact and $c \colon \Omega \times \Omega \to \R$ be continuous, symmetric and positive-semidefinite. Then the operator defined in~\eqref{eq:covariance-Hilbert} admits the following eigendecomposition
\begin{equation*}
    \cov f = \sum_{i=1}^\infty \lambda_i \sp{f,\phi_i}_{L^2} \phi_i,
\end{equation*}
where $(\phi_i)_{i=1}^\infty$ is an orthonormal basis of $L^2(\Omega)$, $\lambda_i \geq 0$ and $\sp{\phi,\phi'}_{L^2} \defeq \int_\Omega \phi(x)\phi'(x) \d x$ is the scalar product in $L^2(\Omega)$.  Moreover, we have that
\begin{equation*}
    \sum_{i=1}^\infty \lambda_i < \infty,
\end{equation*}
hence, $\cov$ is a nuclear operator $L^2(\Omega) \rightarrow L^2(\Omega)$.
\end{theorem}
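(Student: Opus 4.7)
The plan is to establish the three claims—eigendecomposition, non-negativity of the eigenvalues, and nuclearity—by combining the spectral theorem for compact self-adjoint operators with the classical Mercer uniform convergence argument.

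First I would verify that $\cov \colon L^2(\Omega) \to L^2(\Omega)$ is compact, self-adjoint, and positive semidefinite. Compactness follows because $c$ is continuous on the compact set $\Omega \times \Omega$, hence bounded, so $c \in L^2(\Omega \times \Omega)$ and $\cov$ is Hilbert-Schmidt. Self-adjointness is immediate from symmetry of $c$, and positive semidefiniteness from the positive semidefiniteness of $c$ by approximating $f \in L^2(\Omega)$ with simple functions and passing to the limit in finite quadratic forms. The spectral theorem for compact self-adjoint operators then yields a countable orthonormal system $(\phi_i)_{i \in \N}$ of eigenfunctions with real eigenvalues $\lambda_i$, which are non-negative by positivity; completing to an orthonormal basis of $L^2(\Omega)$ by an orthonormal basis of $\ker \cov$ (whose contribution to the sum is zero) yields the claimed expansion.

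The main work is the nuclearity claim $\sum_i \lambda_i < \infty$. For this I would follow Mercer's original argument and show that the partial sums $c_N(x,y) \defeq \sum_{i=1}^N \lambda_i \phi_i(x) \phi_i(y)$ converge to $c(x,y)$ uniformly on $\Omega \times \Omega$, and in particular on the diagonal. The key observation is that the residual $c - c_N$ is itself a continuous, symmetric, positive semidefinite kernel (its associated integral operator is $\cov - \sum_{i=1}^N \lambda_i \phi_i \otimes \phi_i$, which is positive semidefinite on the orthogonal complement of $\span\{\phi_1,\ldots,\phi_N\}$ and zero on it), so $(c - c_N)(x,x) \geq 0$ is pointwise monotonically decreasing in $N$. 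Pointwise convergence to zero on the diagonal follows from the spectral expansion and Bessel's inequality applied to $c(\cdot, x) \in L^2(\Omega)$; Dini's theorem then upgrades this to uniform convergence on the compact set $\Omega$. Integrating the identity $c(x,x) = \sum_{i=1}^\infty \lambda_i \phi_i(x)^2$ and using orthonormality yields
\begin{equation*}
    \sum_{i=1}^\infty \lambda_i = \int_\Omega c(x,x) \d x \leq \abs{\Omega} \max_{x \in \Omega} c(x,x) < \infty,
\end{equation*}
which is exactly nuclearity of $\cov$ in the Hilbert space sense.

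The main obstacle is guaranteeing continuity of the eigenfunctions $\phi_i$, which is required for Dini's theorem to apply. I would justify this by noting that, for $\lambda_i > 0$, the identity $\phi_i = \lambda_i^{-1} \cov \phi_i = \lambda_i^{-1} \int_\Omega \phi_i(x) c(x,\cdot) \d x$ expresses $\phi_i$ as the image under $\cov$ of an $L^2$ function; since $c$ is uniformly continuous on the compact set $\Omega \times \Omega$, a standard dominated convergence argument shows that this image is continuous. With the $\phi_i$ continuous and the monotone convergence of the diagonal residuals in hand, the Mercer machinery closes the proof.
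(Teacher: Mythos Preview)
Your outline is the standard Mercer argument and is essentially correct. The paper, however, does not give its own proof of this theorem: it is quoted as a classical result with a citation to Mercer's 1909 paper and serves only as background for the Hilbert space setting, so there is no in-paper proof to compare against.

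One small tightening: for the nuclearity conclusion you do not actually need the pointwise identity $c(x,x)=\sum_i \lambda_i \phi_i(x)^2$ or Dini's theorem. Positive semidefiniteness of the continuous residual kernel $c-c_N$ already gives $\sum_{i=1}^N \lambda_i \phi_i(x)^2 \leq c(x,x)$ for every $N$ and $x$; integrating over $\Omega$ and using orthonormality yields $\sum_{i=1}^N \lambda_i \leq \int_\Omega c(x,x)\,\mathrm{d}x < \infty$ immediately. This also avoids the one slightly loose step in your sketch: ``Bessel's inequality applied to $c(\cdot,x)$'' controls $\sum_i \lambda_i^2 \phi_i(x)^2$, not $\sum_i \lambda_i \phi_i(x)^2$, so it does not by itself give the pointwise diagonal limit. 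The correct route to the equality (needed only if you want the full uniform Mercer expansion, not for nuclearity) is to first establish uniform-in-$y$ convergence of $\sum_i \lambda_i \phi_i(x)\phi_i(y)$ via Cauchy--Schwarz and the diagonal bound, and then evaluate at $y=x$.
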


We can use the eigendecomposition of the covariance operator from \cref{thm:mercer} to obtain a (zero-mean) Gaussian random field on $H$ by letting
\begin{equation}\label{eq:theta-Hilbert}
\theta \defeq \sum_{i=1}^\infty \lambda_i^{1/2} \xi_i \phi_i,
\end{equation}
where $\xi_1, \xi_2, \ldots \sim \mathrm{N}(0,1)$ are independent and identically distributed. The following result shows that if $\mathcal C$ is nuclear then the samples~\eqref{eq:theta-Hilbert} are in $H$ almost surely. 
\begin{proposition}\label{prop:norm-theta-bounded-Hilbert}
Let $\xi_1, \xi_2, \ldots \sim \mathrm{N}(0,1)$ be independent and identically distributed and $\{\lambda_i\}_{i\in\N} \subset \R_+$ such that $\sum_{i=1}^\infty \lambda_i < \infty$. Let $\theta$ be as defined in~\eqref{eq:theta-Hilbert}. Then almost surely $\theta \in H$.
\end{proposition}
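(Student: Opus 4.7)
The plan is to show that the partial sums
\[
S_N \defeq \sum_{i=1}^N \lambda_i^{1/2} \xi_i \phi_i
\]
form a Cauchy sequence in $H = L^2(\Omega)$ for $\mathbb{P}$-almost every $\omega \in Y$, so that the series defining $\theta$ converges in $H$ almost surely. The argument rests on the orthonormality of the eigenbasis $\{\phi_i\}_{i \in \N}$ supplied by Mercer's theorem (\cref{thm:mercer}) and the finiteness of $\sum_i \lambda_i$.

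First I would use orthonormality to compute, for $M > N$,
\[
\|S_M - S_N\|_H^2 \;=\; \Bigl\| \sum_{i=N+1}^M \lambda_i^{1/2} \xi_i \phi_i \Bigr\|_H^2 \;=\; \sum_{i=N+1}^M \lambda_i \xi_i^2,
\]
which is a pointwise identity in $\omega \in Y$. Next, I would introduce the non-negative random series $R \defeq \sum_{i=1}^\infty \lambda_i \xi_i^2$. Since $\mathbb{E}[\xi_i^2] = 1$, Tonelli's theorem gives
\[
\mathbb{E}[R] \;=\; \sum_{i=1}^\infty \lambda_i \mathbb{E}[\xi_i^2] \;=\; \sum_{i=1}^\infty \lambda_i \;<\; \infty
\]
by the nuclearity of $\cov$. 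Hence $R < \infty$ on an event $Y_0 \subset Y$ with $\mathbb{P}(Y_0) = 1$.

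On $Y_0$ the tail $\sum_{i=N+1}^\infty \lambda_i \xi_i^2$ tends to zero as $N \to \infty$, so $(S_N)$ is Cauchy in $H$ by the identity above and therefore converges in $H$ to some limit $\theta(\omega) \in H$. Since $H$ is complete and separable, $\theta$ is a bona fide $H$-valued random variable, and $\theta \in H$ almost surely.

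The proof is essentially routine; there is no serious obstacle, since the orthonormality of $\{\phi_i\}$ collapses the $H$-norm of the partial sums to a scalar series to which Tonelli and the nuclearity hypothesis apply directly. The one point worth flagging is that we obtain genuine almost sure convergence in $H$ (not merely convergence in $L^2(Y, \mathbb{P}; H)$) at no extra cost, because the Cauchy property is verified pathwise on the full-measure event $Y_0$.
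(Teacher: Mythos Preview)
Your proof is correct, and it is in fact more elementary than the paper's. Both arguments reduce to showing that $R \defeq \sum_{i=1}^\infty \lambda_i \xi_i^2 < \infty$ almost surely; you obtain this immediately from Tonelli via $\mean[R] = \sum_i \lambda_i < \infty$, while the paper instead centres the summands and appeals to a Kolmogorov-type convergence criterion for independent series (Billingsley, Thm.~22.6), checking that $\sum_i \lambda_i(\xi_i^2-1)$ converges a.s.\ because $\mathrm{Var}(\lambda_i(\xi_i^2-1)) = 2\lambda_i^2$ is summable. Your route is shorter here because the terms are non-negative; the paper's centring trick, while unnecessary in this Hilbert-space case, is the template it reuses in the Banach-space setting (\cref{prop:bounded-pairings,prop:norm-theta-bounded-Banach}), where the summands are no longer non-negative and Tonelli does not apply directly. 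Your framing in terms of the Cauchy property of partial sums is also slightly more careful about what it means for the series~\eqref{eq:theta-Hilbert} to converge in $H$.
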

\begin{proof}
Since $\{\phi_i\}_{i\in\N}$ are orthonormal, we have
\begin{equation}\label{eq_H_norm_finite?}
    \norm{\theta}_{H}^2 = \sum_{i=1}^\infty \lambda_i \xi_i^2.
\end{equation}
Note that we have  $\sum_{i=1}^\infty \lambda_i < \infty$ and $\sum_{i=1}^\infty \lambda_i^2 < \infty$  by assumption. Then, we study
$$
\sum_{i=1}^\infty \lambda_i (\xi_i^2 - 1). 
$$
We have $\mathbb{E}[\lambda_i (\xi_i^2 - 1)] = 0$ and $\mathrm{Var}(\lambda_i (\xi_i^2 - 1)) = 2\lambda_i^2$, for $i \in \mathbb{N}$. Thus, by \cite[Thm. 22.6]{billingsley:1995}, the series $\sum_{i=1}^\infty \lambda_i (\xi_i^2 - 1)$ converges almost surely. On the other hand, we can write the sum~\eqref{eq_H_norm_finite?} as
$\sum_{i=1}^\infty \lambda_i (\xi_i^2 - 1) + \sum_{i=1}^\infty \lambda_i,$ which is now the sum of two almost surely finite series. Thus,~\eqref{eq_H_norm_finite?} is almost surely finite itself.
\end{proof}

The covariance operator $\cov$ can be identified with the following bilinear form. 
\begin{proposition}[{\cite{Bogachev:2007}}]
For any $f,g \in L^2(\Omega)$ one has
\begin{eqnarray}\label{eq:bilinear-form}
    \sp{f,\mathcal C g} = \mean_\theta(\sp{f,\theta}_H\sp{g,\theta}_H).
\end{eqnarray}
\end{proposition}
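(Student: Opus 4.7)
The plan is to expand both inner products using the Karhunen–Loève–type series \eqref{eq:theta-Hilbert} that defines $\theta$, then take expectation term by term using independence of the $\xi_i$'s. Concretely, since $(\phi_i)_{i\in\N}$ is an orthonormal basis of $L^2(\Omega)$, I would first write $\sp{f,\theta}_H = \sum_{i=1}^\infty \lambda_i^{1/2} \xi_i \sp{f,\phi_i}_H$ and likewise for $\sp{g,\theta}_H$. Multiplying formally and using $\mean[\xi_i\xi_j] = \delta_{ij}$, the cross terms vanish and one is left with $\sum_{i=1}^\infty \lambda_i \sp{f,\phi_i}_H \sp{g,\phi_i}_H$, which by Mercer's decomposition from \cref{thm:mercer} is exactly $\sp{f,\cov g}_H$.

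To turn this sketch into a proof, the main issue is justifying the interchange of expectation with the infinite sums. I would proceed by truncation: set
\begin{equation*}
S_N \defeq \sum_{i=1}^N \lambda_i^{1/2} \xi_i \sp{f,\phi_i}_H, \qquad T_N \defeq \sum_{i=1}^N \lambda_i^{1/2} \xi_i \sp{g,\phi_i}_H.
\end{equation*}
These are scalar zero-mean Gaussians, and a direct computation (using independence of the $\xi_i$'s and Parseval's identity) shows that $\{S_N\}$ is Cauchy in $L^2(\mathbb{P})$ with limit variance $\sum_{i=1}^\infty \lambda_i \sp{f,\phi_i}_H^2 \le \norm{\cov}_{\calN}\,\norm{f}_H^2 < \infty$, and analogously for $T_N$. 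Hence $S_N \to \sp{f,\theta}_H$ and $T_N \to \sp{g,\theta}_H$ in $L^2(\mathbb{P})$, where the identification of the limits uses \cref{prop:norm-theta-bounded-Hilbert}, which guarantees $\theta \in H$ almost surely so that the Hilbert-space inner products on the right are well-defined, together with continuity of $\sp{f,\cdot}_H$ on $H$.

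By Cauchy–Schwarz in $L^2(\mathbb{P})$, $L^2$-convergence of $S_N$ and $T_N$ implies $L^1$-convergence of the products $S_N T_N \to \sp{f,\theta}_H \sp{g,\theta}_H$, so expectations converge:
\begin{equation*}
\mean(\sp{f,\theta}_H \sp{g,\theta}_H) = \lim_{N\to\infty} \mean(S_N T_N) = \lim_{N\to\infty} \sum_{i,j=1}^N \lambda_i^{1/2}\lambda_j^{1/2} \sp{f,\phi_i}_H \sp{g,\phi_j}_H \mean[\xi_i\xi_j].
\end{equation*}
Since $\mean[\xi_i\xi_j] = \delta_{ij}$, only diagonal terms survive, leaving $\sum_{i=1}^N \lambda_i \sp{f,\phi_i}_H \sp{g,\phi_i}_H$. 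Passing to the limit and invoking \cref{thm:mercer} to recognise this as $\sp{f,\cov g}_H$ completes the argument.

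The only delicate step is the interchange of sum and expectation, and the argument above reduces it to $L^2(\mathbb{P})$-convergence of partial sums, which is immediate from the summability of the eigenvalues guaranteed by nuclearity. Everything else is bookkeeping with orthonormality.
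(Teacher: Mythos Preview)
Your argument is correct and follows the same route as the paper's proof: expand $\sp{f,\theta}_H$ and $\sp{g,\theta}_H$ via the series~\eqref{eq:theta-Hilbert}, use $\mean[\xi_i\xi_j]=\delta_{ij}$ to kill the off-diagonal terms, and identify the remaining sum via Mercer's theorem. The only difference is that you supply an explicit justification (truncation and $L^2(\mathbb P)$-convergence) for interchanging expectation and the infinite sums, whereas the paper performs this step without comment; your version is thus slightly more rigorous but not a different approach.
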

\begin{proof}
A short computation gives
\begin{eqnarray*}
    \mean_\theta(\sp{f,\theta}_H\sp{g,\theta}_H) &=& \mean_\theta\left(\sp{f,\sum_{i=1}^\infty \lambda_i^{1/2} \xi_i \phi_i}_H\sp{g,\sum_{j=1}^\infty \lambda_j^{1/2} \xi_j \phi_j}_H\right)
    \\ &=& \sum_{i=1}^\infty \mean_\theta\left( \lambda_i\xi_i^2\sp{f,  \phi_i}_H\sp{g, \phi_i}_H\right)
    \\ &=& \sum_{i=1}^\infty \lambda_i \sp{f,  \phi_i}_H\sp{g,\phi_i}_H = \sp{f,\mathcal C g}.
\end{eqnarray*}
\end{proof}

\paragraph{Characterisation via tensor products.} 

It is also possible to discuss \cref{thm:mercer} in the language of topological tensor products. 
Since all Hilbert spaces have the approximation property~\cite{ryan2002book}, using \cref{thm:projtenprod-nuclear} one can see that the covariance operator $\cov$ is nuclear if and only if the associated bilinear form 
\begin{equation*}
    (f,g) \mapsto \int_{\Omega \times \Omega} f(x) g(y) c(x,y) \d{x} \d{y}
\end{equation*}
 has a finite projective norm. Somewhat abusing notation, we will use the letter $c$ both for the covariance kernel and this bilinear form. If the covariance kernel is symmetric, we have that
 \begin{equation}\label{eq:c-in-projtenprod-L2}
     c \in \symprojtenprod{L^2(\Omega)}.
 \end{equation}
 is necessary and sufficient for the nuclearity of $\cov$. \cref{thm:mercer} gives a sufficient condition for this.

It is also helpful to rewrite~\eqref{eq:covariance-Hilbert} in the language of the inner product on $L^2(\Omega)$
 \begin{equation}\label{eq:covariance-Hilbert-pairing}
     \cov f(x) \defeq \sp{f,c(\cdot,x)} , \quad f \in L^2(\Omega), \,\, x \in \Omega,
 \end{equation}
 where $\sp{\cdot,\cdot}$ denotes the inner product on $L^2(\Omega)$ and the equality  holds almost everywhere. 
 In the sequel we will also use the notation $\sp{\cdot,\cdot}$ for the duality pairing between Banach spaces, sometimes using subscripts to specify these spaces.
 
 Another useful equivalent way of writing~\eqref{eq:covariance-Hilbert} and~\eqref{eq:covariance-Hilbert-pairing} is using pairings with elements of $L^2$
 \begin{equation}\label{eq:covariance-Hilbert-pairing-weak}
     \sp{f,\cov g} \defeq \sp{f,\sp{g,c(\cdot,x)}} = \sp{c,f \otimes_s g}, \quad f,g \in L^2(\Omega),
 \end{equation}
 where the last pairing is between the bilinear form $c$ and the tensor $f \otimes_s g$. Since $c$ is symmetric, we use the symmetric product $f \otimes_s g$.

%%%%%%%%

%%%%%%%%
\section{Gaussian random fields on dual Banach spaces} \label{sec:spaces-with-preduals}
In this section, we will discuss how the framework of \cref{sec:Hilbert} can be extended to (possibly, non-separable) Banach spaces that have a predual which possesses a basis.

 %%%%%%%%%%
 \subsection{General theory} \label{sec:general-theory}
Let $U$ be a Banach space and $\predual{U}$ a predual of $U$. We will assume that $U$ has the approximation property and $\predual{U}$ has a basis.
In order to retain the structure of~\eqref{eq:covariance-Hilbert-pairing} and also ensure that the image of the covariance operator $\cov$ is in $U$, the domain of $\cov$ should be either $U^*$ or $\predual{U}$. We are particularly interested in non-reflexive spaces, in which case the predual usually has `nicer' properties than the dual. Hence, we will consider $\cov \colon \predual{U} \to U$. 

Let $c$ be a bilinear form on $U$, which we will also refer to as the covariance kernel.  
 We generalise~\eqref{eq:covariance-Hilbert-pairing-weak} and define the covariance operator as follows
\begin{equation}\label{eq:covariance-Banach-pairing-weak-star}
     \cov \colon \sp{\eta',\cov \eta} \defeq \sp{c,\eta \otimes_s \eta'} , \quad \eta,\eta' \in \predual{U},
 \end{equation}
where the second pairing is between the bilinear form $c$ and the tensor $\eta \otimes_s \eta'$.

The following result is a consequence of \cref{thm:symprojtenprod-nuclear}.
\begin{proposition}
Suppose that $U$ has the approximation property. Then the covariance operator $\cov \colon \predual{U} \to U$ as defined in~\eqref{eq:covariance-Banach-pairing-weak-star} is symmetric and nuclear if and only if
\begin{equation}\label{eq:c-in-projtenprod}
    c \in \symprojtenprod{U}.
\end{equation}
In this case, there exists a sequence $\{\phi_i\}_{i\in\N} \subset U$ satisfying $\norm{\phi_i}=1$ for all $i$ such that
\begin{equation} \label{eq:c-nuclear-decomposition}
    \cov\eta = \sum_{i=1}^\infty \lambda_i \sp{\eta,\phi_i}_{\predual{U},U} \phi_i, \quad \eta \in \predual{U}
\end{equation}
and $\sum_{i=1}^\infty \abs{\lambda_i}<\infty$. In addition, $\cov$ is positive semidefinite if $\lambda_i \geq 0$ for all $i$. 
\end{proposition}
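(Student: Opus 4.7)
The statement is essentially a concrete reading of the tensor-product machinery collected in the appendix, so the plan is to reduce it to \cref{thm:symprojtenprod-nuclear} and then unpack the resulting nuclear representation.

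First I would observe that the defining relation \eqref{eq:covariance-Banach-pairing-weak-star} is nothing but the canonical identification between bilinear forms on $\predual{U} \times \predual{U}$ and linear operators $\predual{U} \to (\predual{U})^* = U$. Because $c$ is paired against the symmetric tensor $\eta \otimes_s \eta'$, the induced operator automatically satisfies $\sp{\eta', \cov \eta} = \sp{\eta, \cov \eta'}$, i.e.\ it is symmetric in the appropriate sense. Under the hypothesis that $U$ has the approximation property, \cref{thm:symprojtenprod-nuclear} identifies $\symprojtenprod{U}$ isometrically with the space of symmetric nuclear operators $\predual{U} \to U$. This yields the ``iff'' equivalence $c \in \symprojtenprod{U} \Leftrightarrow \cov$ symmetric and nuclear for free.

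Second, for the explicit decomposition I would use the standard description of the symmetric projective tensor product: every $c \in \symprojtenprod{U}$ admits a representation
$$c = \sum_{i=1}^\infty \lambda_i\, \phi_i \otimes_s \phi_i, \quad \phi_i \in U,\ \|\phi_i\|=1,\ \sum_{i=1}^\infty |\lambda_i|<\infty,$$
elements of the a priori more general symmetrised form $u \otimes_s v + v \otimes_s u$ being reduced to diagonal ones via polarisation. Substituting this representation into \eqref{eq:covariance-Banach-pairing-weak-star} gives
$$\sp{\eta', \cov\eta} = \sum_{i=1}^\infty \lambda_i \sp{\eta,\phi_i}_{\predual{U},U}\sp{\eta',\phi_i}_{\predual{U},U}$$
for all $\eta,\eta' \in \predual{U}$. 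Since $|\sp{\eta,\phi_i}|\leq \|\eta\|$ and $\sum_i |\lambda_i|<\infty$, the partial sums $\sum_{i\leq N}\lambda_i \sp{\eta,\phi_i}\phi_i$ form a Cauchy sequence in $U$, so the series converges in norm and defines precisely the element $\cov\eta \in U$ appearing in \eqref{eq:c-nuclear-decomposition}. Positive semidefiniteness under the sign restriction is then immediate from $\sp{\eta,\cov\eta} = \sum_i \lambda_i \sp{\eta,\phi_i}^2 \geq 0$ whenever all $\lambda_i \geq 0$.

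The only point where I expect to have to be careful is justifying the diagonal representation $c = \sum_i \lambda_i\, \phi_i \otimes_s \phi_i$ with scalar coefficients of either sign, as opposed to the more general symmetrised form $\sum_i u_i \otimes_s v_i$ that one reads off directly from the projective norm. This is handled by the polarisation identity $u \otimes_s v = \tfrac{1}{4}\bigl((u+v)\otimes_s(u+v) - (u-v)\otimes_s(u-v)\bigr)$ together with a rescaling to produce unit-norm $\phi_i$; since only absolute summability of the $\lambda_i$ is required, the rescaling is harmless and no compactness or sign issue arises. All of this underlying tensor-product infrastructure is concentrated in \cref{app:tensors}, so the proof of the proposition itself remains short.
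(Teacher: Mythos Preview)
Your proposal is correct and follows exactly the paper's approach: the paper gives no separate proof but simply records the proposition as a consequence of \cref{thm:symprojtenprod-nuclear}. Your worry about polarisation is in fact unnecessary, as the diagonal representation $c = \sum_i \lambda_i\, \phi_i \otimes \phi_i$ with $\|\phi_i\|=1$ and $\sum_i |\lambda_i|<\infty$ is already recorded in the appendix as a stated property of elements of $\symprojtenprod{U}$.
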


Now we can define a Gaussian random field over $U$ analogously to~\eqref{eq:theta-Hilbert}
\begin{equation}\label{eq:theta-Banach}
    \theta \defeq \sum_{i=1}^\infty \lambda_i^{1/2} \xi_i \phi_i,
\end{equation}
where $\xi_i \sim N(0,1)$ are i.i.d. 
It is important to note that, in general, we will not be able to show that $\theta \in U$ with probability $1$, but only that the sum \eqref{eq:theta-Banach} converges weakly-* with probability $1$.

\begin{proposition}\label{prop:bounded-pairings}
Let $\xi_1, \xi_2, \ldots \sim \mathrm{N}(0,1)$ be independent and identically distributed and $\{\lambda_i\}_{i\in\N} \subset \R_+$ such that $\sum_{i=1}^\infty \lambda_i < \infty$. Let $\theta$ be as defined in~\eqref{eq:theta-Banach}. Then, for any $\eta \in \predual{U}$, we have
$$
\abs{\sp{\eta, \theta}_{\predual{U},U}} < \infty
$$
with probability $1$.
\end{proposition}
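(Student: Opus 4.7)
The plan is to reduce the claim to the almost-sure convergence of a scalar series of independent mean-zero Gaussian random variables via the same Kolmogorov-type criterion already invoked in the proof of \cref{prop:norm-theta-bounded-Hilbert}, namely \cite[Thm.~22.6]{billingsley:1995}.

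First, for a fixed $\eta \in \predual{U}$, I would pair $\eta$ termwise with the defining series \eqref{eq:theta-Banach} and consider the scalar series
\begin{equation*}
    S \defeq \sum_{i=1}^\infty \lambda_i^{1/2} \xi_i \, \sp{\eta,\phi_i}_{\predual{U},U}.
\end{equation*}
The task reduces to showing $S$ converges almost surely, since this is precisely $\sp{\eta,\theta}_{\predual{U},U}$ interpreted as a weak$^*$ limit. The summands are independent Gaussian random variables with mean zero and variance $\lambda_i \sp{\eta,\phi_i}_{\predual{U},U}^2$.

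Next I would estimate the variances using the normalisation $\|\phi_i\|_U = 1$ from \eqref{eq:c-nuclear-decomposition}, which gives $|\sp{\eta,\phi_i}_{\predual{U},U}| \leq \|\eta\|_{\predual{U}}$, and hence
\begin{equation*}
    \sum_{i=1}^\infty \mathrm{Var}\!\left(\lambda_i^{1/2} \xi_i \sp{\eta,\phi_i}_{\predual{U},U}\right) = \sum_{i=1}^\infty \lambda_i \sp{\eta,\phi_i}_{\predual{U},U}^2 \leq \|\eta\|_{\predual{U}}^2 \sum_{i=1}^\infty \lambda_i < \infty
\end{equation*}
by the hypothesis $\sum_i \lambda_i<\infty$. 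Applying \cite[Thm.~22.6]{billingsley:1995} to the independent, centred summands with summable variances then yields almost-sure convergence of $S$, and in particular $|S| < \infty$ with probability one.

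There is no serious obstacle: the boundedness $\|\phi_i\|_U = 1$ carries over from the nuclear decomposition, and the argument mirrors (and is in fact simpler than) the one already used for Hilbert spaces in \cref{prop:norm-theta-bounded-Hilbert}, since here we only need convergence of a single scalar series rather than of a norm. The only subtlety worth flagging is that the exceptional null set on which $S$ fails to converge depends on $\eta$, so this proposition establishes pointwise (in $\eta$) well-definedness of the pairing, not joint continuity in $\eta$; this is consistent with the authors' earlier remark that in general \eqref{eq:theta-Banach} converges only weakly$^*$.
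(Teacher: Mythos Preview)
Your proposal is correct and follows essentially the same approach as the paper: expand the pairing termwise, observe the summands are independent centred with variance bounded by $\lambda_i\norm{\eta}_{\predual{U}}^2$, and apply \cite[Thm.~22.6]{billingsley:1995}. Your additional remark about the $\eta$-dependence of the null set is a helpful clarification not present in the paper's proof.
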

\begin{proof}
We can write
\begin{eqnarray*}
\sp{\eta, \theta}_{\predual{U},U} = \sp{\eta, \sum_{i=1}^\infty \lambda_i^{1/2} \xi_i \phi_i}_{\predual{U},U} = \sum_{i=1}^\infty \lambda_i^{1/2} \xi_i\sp{\eta,  \phi_i}_{\predual{U},U}.
\end{eqnarray*}
Then we have $\mathbb{E}[\lambda_i^{1/2} \xi_i\sp{\eta,  \phi_i}_{\predual{U},U}] = 0$ and $\mathrm{Var}(\lambda_i^{1/2} \xi_i\sp{\eta,  \phi_i}_{\predual{U},U}) = \lambda_i\sp{\eta,  \phi_i}_{\predual{U},U}^2 \leq \lambda_i \norm{\eta}_{\predual{U}}^2$. Moreover, $\sum_{i = 1}^\infty \lambda_i  \norm{\eta}_{\predual{U}}^2 < \infty$, by assumption. Hence, by \cite[Thm. 22.6]{billingsley:1995}, we have $\abs{\sp{\eta, \theta}_{\predual{U},U}} < \infty$ with probability $1$.
\end{proof}

Let $\{\eta_i\}_{i\in\N}$ be a countable dense system in $\predual{U}$. The weak* topology on the unit ball of $U$ can be metrised by the following metric~\cite[Thm. V.5.1]{DS1}
\begin{equation*}
    \Delta(u,u') = \sum_{i=1}^\infty \beta_i \,  \frac{\abs{\sp{\eta_i,u-u'}}}{1+\abs{\sp{\eta_i,u-u'}}},
\end{equation*}
where $\beta_i > 0$ are some coefficients such that $\sum_{i=1}^\infty \beta_i = 1$. 
If $\{\eta_i\}_{i\in\N}$ are normalised, we can use the following equivalent metric
\begin{equation}\label{eq:ws-metric}
    \Delta_*(u,u') \defeq \sum_{i=1}^\infty \beta_i \, \abs{\sp{\eta_i,u-u'}}.
\end{equation}
We denote by $\weakstarcomp{U}$ be the completion of $U$ with respect to the metric~\eqref{eq:ws-metric}. It can be turned into a normed space by defining
\begin{equation*}
    \norm{u}_{\weakstarcomp{U}} \defeq \Delta_*(u,0) = \sum_{i=1}^\infty \beta_i \, \abs{\sp{\eta_i,u}}.
\end{equation*}

\begin{theorem}\label{thm:norm-theta-bounded-ws}
Let $\xi_1, \xi_2, \ldots \sim \mathrm{N}(0,1)$ be independent and identically distributed and $\{\lambda_i\}_{i\in\N} \subset \R_+$ such that $\sum_{i=1}^\infty \lambda_i < \infty$. Let $\theta$ be as defined in~\eqref{eq:theta-Banach}. Then $\theta \in \weakstarcomp{U}$ 
with probability $1$.
\end{theorem}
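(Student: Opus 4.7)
The element $\theta$ should be interpreted as the limit in $\weakstarcomp{U}$ of the partial sums $\theta_N \defeq \sum_{j=1}^N \lambda_j^{1/2}\xi_j \phi_j \in U \subset \weakstarcomp{U}$, so the plan is to show that $\{\theta_N\}$ is Cauchy in $\weakstarcomp{U}$ almost surely. Since $\weakstarcomp{U}$ is complete by construction, this delivers an almost-sure limit $\theta \in \weakstarcomp{U}$ and establishes the claim.

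The key step is a bound on the expected norm of an increment. For $M>N$, the definition of the weak-$*$ norm gives
\[\norm{\theta_M - \theta_N}_{\weakstarcomp{U}} = \sum_{i=1}^\infty \beta_i \left|\sum_{j=N+1}^M \lambda_j^{1/2}\xi_j \sp{\eta_i,\phi_j}_{\predual{U},U}\right|.\]
For each fixed $i$ the inner finite sum is a centred Gaussian whose variance $\sum_{j=N+1}^M \lambda_j \sp{\eta_i,\phi_j}^2$ is bounded above by $\sum_{j=N+1}^M \lambda_j$, using the normalisations $\norm{\eta_i}_{\predual{U}} = \norm{\phi_j}_U = 1$. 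Combining the formula $\mathbb{E}|X| = \sqrt{2/\pi}\,\sigma$ for a centred Gaussian of variance $\sigma^2$ with Tonelli's theorem and $\sum_i \beta_i = 1$ yields
\[\mathbb{E}\bigl[\norm{\theta_M - \theta_N}_{\weakstarcomp{U}}\bigr] \leq \sqrt{2/\pi}\,\Bigl(\sum_{j=N+1}^M \lambda_j\Bigr)^{1/2},\]
which tends to zero as $N\to\infty$ uniformly in $M>N$, since $\sum_j \lambda_j<\infty$ by assumption. Hence $\{\theta_N\}$ is Cauchy in $L^1(Y;\weakstarcomp{U})$, and in particular in probability.

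To upgrade this to almost sure convergence, I would invoke the It\^o--Nisio theorem applied to the series of independent symmetric random vectors $\lambda_j^{1/2}\xi_j\phi_j$ in $\weakstarcomp{U}$. This space is separable, since its norm is defined through the countable family $\{\eta_i\}$ and thus embeds isometrically into a weighted $\ell^1(\N)$; together with independence and symmetry of the summands, It\^o--Nisio promotes convergence in probability to almost sure convergence of the partial sums in $\weakstarcomp{U}$. The principal technical hurdle is precisely this $L^1$-to-a.s.\ upgrade. An alternative route avoiding It\^o--Nisio would be to select a sufficiently sparse subsequence $N_k$ with $\sum_k (\sum_{j>N_k}\lambda_j)^{1/2}<\infty$, apply Borel--Cantelli to obtain almost sure convergence along this subsequence, and then use L\'evy's maximal inequality for partial sums of independent symmetric random vectors to control the oscillation between consecutive $N_k$ and $N_{k+1}$, thereby recovering convergence of the full sequence.
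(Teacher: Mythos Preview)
Your proof is correct and a bit more carefully engineered than the paper's. The paper argues more directly: it introduces the discrete probability measure $\mu \defeq \sum_i \beta_i \delta_{\eta_i}$ on the unit ball of $\predual{U}$, rewrites $\norm{\theta}_{\weakstarcomp{U}} = \mean_\mu\abs{\sp{\cdot,\theta}}$, invokes \cref{prop:bounded-pairings} to say each pairing is finite almost surely, and concludes (via an implicit Fubini/Tonelli step) that this $\mu$-expectation is finite almost surely. Your computation of $\mathbb E\bigl[\norm{\theta_M-\theta_N}_{\weakstarcomp{U}}\bigr]$ is in fact the same Tonelli argument applied to an increment rather than to the full series; specialising your bound to $N=0$ and letting $M\to\infty$ recovers exactly the paper's (intended) estimate $\mathbb E\bigl[\norm{\theta}_{\weakstarcomp{U}}\bigr]\leq\sqrt{2/\pi}\,(\sum_j\lambda_j)^{1/2}$.

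The genuine difference is what happens after this shared estimate. The paper stops at ``finite norm almost surely'', which is quick but leaves the almost-sure convergence of the partial sums in $\weakstarcomp{U}$ implicit. You instead establish $L^1$-Cauchyness of $\{\theta_N\}$ and then invoke It\^o--Nisio (or your Borel--Cantelli plus L\'evy-maximal alternative) to obtain honest almost-sure convergence of the series in $\weakstarcomp{U}$. That extra step makes your argument cleaner about what ``$\theta\in\weakstarcomp{U}$'' actually means, at the price of importing some standard machinery; the paper's route is shorter and avoids any named theorem beyond Tonelli, but is correspondingly more informal. Your separability observation (the isometric embedding of $\weakstarcomp{U}$ into weighted $\ell^1$) is correct and is precisely what is needed for It\^o--Nisio to apply.
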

\begin{proof}
Consider the unit ball $B_{\predual{U}}$ and the following probability measure on $B_{\predual{U}}$
\begin{equation*}
    \mu \defeq \sum_{i=1}^\infty \beta_i \delta_{\eta_i},
\end{equation*}
where $\{\eta_i\}_{i\in\N}$ is a countable dense and normalised system in $\predual{U}$ and $\beta_i$'s are positive and sum up to $1$. From \cref{prop:bounded-pairings}, we know that for any $\eta \in \predual{U}$, $\abs{\sp{\eta, \theta}_{\predual{U},U}} < \infty$ with probability $1$. Taking the expectation over $\mu$, we get that
\begin{equation*}
    \norm{\theta}_{\weakstarcomp{U}} = \sum_{i=1}^\infty \abs{\sp{\beta_i \eta_i, \theta}_{\predual{U},U}} = \mean_\mu \abs{\sp{\cdot, \theta}_{\predual{U},U}} < \infty
\end{equation*}
with probability $1$. 
\end{proof}

If the covariance operator is not only nuclear, but $1/2$-nuclear, i.e. $\sum_{i=1}^\infty \lambda_i^{1/2} < \infty$, the we can even ensure that $\theta \in U$ with probability $1$.
\begin{proposition}\label{prop:norm-theta-bounded-Banach}
Let $\xi_1, \xi_2, \ldots \sim \mathrm{N}(0,1)$ be independent and identically distributed and $\{\lambda_i\}_{i\in\N} \subset \R_+$ such that $\sum_{i=1}^\infty \lambda_i^{1/2} < \infty$. Let $\theta$ be as defined in~\eqref{eq:theta-Banach}. Then almost surely $\theta \in U$.
\end{proposition}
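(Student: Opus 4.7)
The plan is to show that the series defining $\theta$ converges absolutely in the norm of $U$ almost surely. Since $U$ is a Banach space, absolute convergence implies norm convergence, and the resulting limit will necessarily coincide with the weak-* limit already guaranteed by \cref{thm:norm-theta-bounded-ws}.

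First, I would exploit that in the nuclear decomposition~\eqref{eq:c-nuclear-decomposition} the vectors $\phi_i \in U$ have unit norm. This gives the pointwise bound
\begin{equation*}
    \sum_{i=1}^\infty \norm{\lambda_i^{1/2} \xi_i \phi_i}_U = \sum_{i=1}^\infty \lambda_i^{1/2} \abs{\xi_i}.
\end{equation*}
Taking expectations and using Tonelli's theorem together with $\mean \abs{\xi_i} = \sqrt{2/\pi}$, one obtains
\begin{equation*}
    \mean \left[\sum_{i=1}^\infty \lambda_i^{1/2} \abs{\xi_i}\right] = \sqrt{2/\pi}\, \sum_{i=1}^\infty \lambda_i^{1/2} < \infty
\end{equation*}
by the $1/2$-nuclearity hypothesis. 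Hence the random series $\sum_{i=1}^\infty \lambda_i^{1/2} \abs{\xi_i}$ is finite with probability $1$.

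On the almost sure event where this bound holds, the partial sums $S_n \defeq \sum_{i=1}^n \lambda_i^{1/2} \xi_i \phi_i$ form a Cauchy sequence in $U$, since for $n<m$ the triangle inequality gives $\norm{S_m - S_n}_U \leq \sum_{i=n+1}^m \lambda_i^{1/2} \abs{\xi_i}$, and the tail of the convergent series on the right tends to zero. Completeness of $U$ then yields a strong limit $\theta_U \in U$. Finally, since strong convergence in $U$ implies weak-* convergence, this $\theta_U$ must agree with the weak-* limit $\theta$ from~\eqref{eq:theta-Banach}, so $\theta \in U$ almost surely.

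There is no genuine obstacle: the argument is short, resting entirely on the observation that $1/2$-nuclearity upgrades the weak-* summability of \cref{thm:norm-theta-bounded-ws} to norm-absolute summability in $U$. The only point worth stating explicitly is the matching of the two notions of limit, which is immediate from the continuity of the duality pairing with any $\eta \in \predual{U}$.
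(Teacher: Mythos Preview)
Your proof is correct and follows the same overall strategy as the paper: both reduce the question to showing that the scalar series $\sum_{i=1}^\infty \lambda_i^{1/2}\abs{\xi_i}$ is almost surely finite, using the normalisation $\norm{\phi_i}_U=1$. The only difference lies in how this finiteness is established. The paper centres the summands, writing $\sum_i \lambda_i^{1/2}(\abs{\xi_i}-\sqrt{2/\pi})$, and appeals to a Kolmogorov-type convergence theorem for independent series \cite[Thm.~22.6]{billingsley:1995}, using that $\sum_i \lambda_i < \infty$ to bound the variances. Your argument is more direct: Tonelli gives $\mean\bigl[\sum_i \lambda_i^{1/2}\abs{\xi_i}\bigr] = \sqrt{2/\pi}\,\sum_i \lambda_i^{1/2} < \infty$, so the sum is finite almost surely. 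This is shorter, uses only the stated hypothesis $\sum_i \lambda_i^{1/2}<\infty$ (rather than the implied $\sum_i \lambda_i<\infty$), and avoids the external reference. Your explicit identification of the strong limit with the weak-* limit from \cref{thm:norm-theta-bounded-ws} is a point the paper leaves implicit.
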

\begin{proof}
Since $\{\phi_i\}_{i\in\N}$ are normalised, the Cauchy-Schwarz inequality yields
\begin{equation}\label{eq_X_norm_finite?}
    \norm{\theta}_U \leq \sum_{i=1}^\infty \lambda_i^{1/2} \abs{\xi_i}.
\end{equation}
We study the sum $\sum_{i=1}^\infty \lambda_i^{1/2} \left(\abs{\xi_i}- \sqrt{2/\pi}\right)$. Note that here $$\mathbb{E}\left[\lambda_i^{1/2} \left(\abs{\xi_i}- \sqrt{2/\pi}\right)\right] = 0, \qquad \mathrm{Var}\left(\lambda_i^{1/2} \left(\abs{\xi_i}- \sqrt{2/\pi}\right)\right) = \lambda_i (\pi - 2)/\pi,$$ where by assumption, $\sum_{i=1}^\infty \lambda_i (\pi - 2)/\pi < \infty$. Thus, by \cite[Thm. 22.6]{billingsley:1995}, $\sum_{i=1}^\infty \lambda_i^{1/2} \left(\abs{\xi_i}- \sqrt{2/\pi}\right)$ is finite with probability $1$. Moreover, by assumption, we have $\sum_{i=1}^\infty \lambda_i^{1/2} \sqrt{2/\pi} < \infty$, giving us $\sum_{i=1}^\infty \lambda_i^{1/2} \abs{\xi_i} < \infty$ with probability $1$.

\end{proof}

Finally, as in the Hilbert space setting, there is a natural bilinear form associated with the covariance operator.
\begin{proposition}
The covariance operator $\mathcal \cov$ can be identified with the following bilinear form
\begin{equation}\label{eq:bilinear-form-Banach}
    \sp{\eta',\mathcal \cov \eta}_{\predual{U},U} = \mean_\theta(\sp{\eta,\theta}_{\predual{U},U}\sp{\eta',\theta}_{\predual{U},U}), \quad \eta,\eta' \in \predual{U}.
\end{equation}
\end{proposition}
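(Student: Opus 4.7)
The plan is to mimic the Hilbert space computation, substituting the expansion \eqref{eq:theta-Banach} of $\theta$ into both pairings and using independence of the $\xi_i$'s. Formally, one writes
\begin{equation*}
    \sp{\eta,\theta}_{\predual{U},U}\sp{\eta',\theta}_{\predual{U},U} = \sum_{i,j=1}^\infty \lambda_i^{1/2}\lambda_j^{1/2}\xi_i\xi_j\sp{\eta,\phi_i}_{\predual{U},U}\sp{\eta',\phi_j}_{\predual{U},U},
\end{equation*}
takes expectation, invokes $\mean[\xi_i\xi_j]=\delta_{ij}$, and matches the result with the nuclear expansion \eqref{eq:c-nuclear-decomposition} of $\cov$ tested against $\eta'$. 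The outcome is
\begin{equation*}
    \mean_\theta(\sp{\eta,\theta}\sp{\eta',\theta}) = \sum_{i=1}^\infty \lambda_i \sp{\eta,\phi_i}\sp{\eta',\phi_i} = \sp{\eta',\cov\eta}_{\predual{U},U},
\end{equation*}
which is exactly \eqref{eq:bilinear-form-Banach}.

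The genuine point to verify is the interchange of expectation and summation, so I would argue first on the level of truncations. Define the partial sums $S_N\defeq\sum_{i=1}^N \lambda_i^{1/2}\xi_i\sp{\eta,\phi_i}_{\predual{U},U}$ and analogously $S_N'$ for $\eta'$. Since $\norm{\phi_i}_U=1$, the bounds $\abs{\sp{\eta,\phi_i}}\leq\norm{\eta}_{\predual{U}}$ hold, so orthogonality in $L^2(Y,\mathbb P)$ yields
\begin{equation*}
    \mean[(S_N-S_M)^2] = \sum_{i=M+1}^N \lambda_i \sp{\eta,\phi_i}^2 \leq \norm{\eta}_{\predual{U}}^2 \sum_{i=M+1}^N \lambda_i,
\end{equation*}
which tends to $0$ as $M,N\to\infty$ by the nuclearity hypothesis $\sum_i\lambda_i<\infty$. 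Hence $S_N\to\sp{\eta,\theta}_{\predual{U},U}$ in $L^2$ (and almost surely by \cref{prop:bounded-pairings}), and likewise for $S_N'$. By the Cauchy--Schwarz inequality in $L^2(Y,\mathbb P)$, the product $S_NS_N'$ converges in $L^1$ to $\sp{\eta,\theta}\sp{\eta',\theta}$, which legitimises $\mean[S_NS_N']\to\mean[\sp{\eta,\theta}\sp{\eta',\theta}]$.

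It remains to compute $\mean[S_NS_N']$ for each finite $N$. This is a finite sum where interchange is trivial, and $\mean[\xi_i\xi_j]=\delta_{ij}$ collapses the double sum to $\sum_{i=1}^N \lambda_i\sp{\eta,\phi_i}\sp{\eta',\phi_i}$. The right-hand side converges absolutely as $N\to\infty$ since $\sum_i\lambda_i\abs{\sp{\eta,\phi_i}\sp{\eta',\phi_i}}\leq\norm{\eta}_{\predual{U}}\norm{\eta'}_{\predual{U}}\sum_i\lambda_i<\infty$, and equals $\sp{\eta',\cov\eta}_{\predual{U},U}$ by pairing the expansion \eqref{eq:c-nuclear-decomposition} of $\cov\eta$ with $\eta'$. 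The main (and essentially only) obstacle is the $L^2$-convergence argument above; everything else is bookkeeping.
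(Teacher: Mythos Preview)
Your proposal is correct and follows essentially the same approach as the paper: expand $\theta$ via \eqref{eq:theta-Banach}, use $\mean[\xi_i\xi_j]=\delta_{ij}$ to collapse the double sum, and identify the result with the nuclear expansion \eqref{eq:c-nuclear-decomposition}. The paper's proof carries out only the formal computation; your additional $L^2$-convergence argument via partial sums and Cauchy--Schwarz is a legitimate justification of the expectation/summation interchange that the paper leaves implicit.
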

\begin{proof}
Let $\eta, \eta' \in \predual{U}$. Then,
\begin{eqnarray*}
   \mean_\theta(\sp{\eta,\theta}_{\predual{U},U}\sp{\eta',\theta}_{\predual{U},U}) &=& \mean_\theta\left(\sp{\eta,\sum_{i=1}^\infty \lambda_i^{1/2} \xi_i \phi_i}_{\predual{U},U}\sp{\eta',\sum_{j=1}^\infty \lambda_j^{1/2} \xi_j \phi_j}_{\predual{U},U}\right) \\
   &=& \sum_{i=1}^\infty\sum_{j=1}^\infty\mean_\xi(\lambda_i^{1/2} \lambda_j^{1/2}\xi_i  \xi_j\sp{\eta,  \phi_i}_{\predual{U},U}\sp{\eta',  \phi_j}_{\predual{U},U})
   \\    &=& \sum_{i=1}^\infty\mean_\xi(\lambda_i \sp{\eta,  \phi_i}_{\predual{U},U}\sp{\eta',  \phi_j}_{\predual{U},U})
   \\ &=& \sp{\eta',\sum_{i=1}^\infty \lambda_i \sp{\eta,  \phi_i}_{\predual{U},U} \phi_j}_{\predual{U},U} = \sp{\eta',\mathcal \cov \eta}_{\predual{U},U}.
\end{eqnarray*}
\end{proof}

%%%%%%%%%%%
\subsection{Finding the tensor decomposition} \label{sec:finding-tensor}
Let $\{\eta_i\}_{i\in\N}$ be a basis of $\predual{U}$ and $\{u_i\}_{i\in\N}\subset U$ the corresponding coefficient functionals satisfying $\norm{\eta_i} = \norm{u_i} = 1$. 
 Then, by \cref{thm:product-basis-sym}, the system $\{\eta_{i(k)} \otimes_s \eta_{j(k)}\}_{k\in\N}$, where the sequence of indices $\{i(k),j(k)\}_{k\in\N}$ corresponds to the ordering~\eqref{diag:square-ordering-sym}, is a basis in $\syminjtenprod{\predual{U}}$ and  $\{u_{i(k)} \otimes_s u_{j(k)}\}_{k\in\N}$ are the corresponding coefficient functionals~\cite{grecu:2005}.

Since, by \cref{thm:tensor-dual}, the projective product $\symprojtenprod{U}$ is isometrically embedded into $(\syminjtenprod{\predual{U}})^*$ and $c \in \symprojtenprod{U}$, it can be expanded in the weak-* sense as follows
\begin{equation}
    c = \sum_{k=1}^\infty \sp{\eta_{i(k)} \otimes_s \eta_{j(k)}, c} u_{i(k)} \otimes_s u_{j(k)} =: \sum_{k=1}^\infty c_{i(k)j(k)} \, u_{i(k)} \otimes_s u_{j(k)} \quad \text{weakly-*}.
\end{equation}
Treating the pairing~\eqref{eq:covariance-Banach-pairing-weak-star} as the pairing between $\syminjtenprod{\predual{U}}$ and $(\syminjtenprod{\predual{U}})^*$, we get
\begin{eqnarray*}
    \sp{\eta',\cov \eta}_{\predual{U},U} &=& \sum_{k=1}^\infty c_{i(k)j(k)} 
    \sp{\eta \otimes_s \eta', u_{i(k)} \otimes_s u_{j(k)}} \\
    &=& \frac12 \sum_{k=1}^\infty c_{i(k)j(k)} \left( \sp{\eta,u_{j(k)}}\sp{\eta',u_{i(k)}} + \sp{\eta,u_{i(k)}}\sp{\eta',u_{j(k)}} \right).
\end{eqnarray*}
This can be also written as follows
\begin{equation} 
  \cov \eta =  \sum_{k=1}^\infty c_{i(k)j(k)}  \frac{\sp{\eta,u_{j(k)}}u_{i(k)} + \sp{\eta,u_{i(k)}}u_{j(k)}}{2} \quad \text{weakly-*}.
\end{equation}

To obtain a diagonal representation~\eqref{eq:c-nuclear-decomposition} 
of the covariance operator $\cov$, we need to find a biorthogonal system $\{\tilde \eta_i, \cov\tilde\eta_i\}_{i \in \N}$ satisfying $\norm{\tilde\eta_i}=1$ such that
\begin{equation*}
    \sp{\tilde\eta_i, \cov \tilde\eta_j}_{\predual{U},U} = \begin{cases}
    \norm{\cov \tilde\eta_i} \quad & \text{if $i = j$}, \\
    0 \quad & \text{otherwise}.
    \end{cases}
\end{equation*}
In other words, we need to biorthogonalise the basis $\{\eta_i\}_{i\in\N}$ with respect to the symmetric positive semidefinite operator $\cov$, which can be done using Gram-Schmidt biorthogonalisation (e.g.,~\cite{kohaupt:2014}). Letting 
\begin{equation*}
\phi_i \defeq \frac{\cov\tilde\eta_i}{\norm{\cov\tilde\eta_i}}  \quad \text{and} \quad \lambda_i \defeq   \norm{\cov\tilde\eta_i},
\end{equation*}
we obtain the desired representation~\eqref{eq:c-nuclear-decomposition}.

%%%%%%%%%%
\section{Relevant spaces} \label{sec:relevant-spaces}
In this section, we apply the above framework to two particular non-separable spaces, the space of Radon measures and the space of H\"{o}lder continuous functions.  

%%%%%%%%%%%%
\subsection{Sampling Radon measures} \label{sec:measures}
We let $U = \M(\Omega)$ be the space of Radon measures on $(\Omega, \mathcal{B}\Omega)$, where  $\Omega$ is compact, and $\predual{U} = C(\Omega)$, the space of continuous functions on $\Omega$.

By~\cite[Thm. 5.25]{ryan2002book}, the covariance operator $\cov$ can be written using a \emph{representing measure} $c \in \Mg(\Omega,\M(\Omega))$, where $\Mg(\Omega,\M(\Omega))$ is the space of vector-valued with values in $\M(\Omega)$
\begin{equation}
    \cov f = \int_\Omega f \, dc \in \M(\Omega), \quad f \in C(\Omega),
\end{equation}
The following result holds~\cite[Prop. 5.30]{ryan2002book}.
\begin{theorem}
The operator $\cov$ defined above is nuclear if and only if its representing measure $c$ has the \RN{} property, i.e. $c \in \Mg_1(\Omega,\M(\Omega))$. In this case
\begin{equation*}
    \norm{\cov}_{\calN} = \norm{c}_{\Mg}.
\end{equation*}
\end{theorem}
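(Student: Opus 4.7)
The plan is to reduce the statement directly to the cited characterisation of nuclear operators out of $C(K)$-spaces (Ryan, Prop.~5.30), which is the natural tool here because $C(\Omega)$ is a classical $\mathcal{L}_\infty$-space and, via Riesz representation, every bounded operator $T \colon C(\Omega) \to X$ into a Banach space $X$ arises from a unique weak*-representing measure $m$ with values a priori in $X^{**}$, via $Tf = \int_\Omega f \, dm$. Applied to $X = \M(\Omega)$, this representing measure is exactly $c \in \Mg(\Omega,\M(\Omega))$ appearing in the definition of $\cov$. Ryan's result then asserts that $T$ is nuclear if and only if $m$ actually takes values in $X$, has bounded variation, and is Bochner representable against its variation $|m|$, i.e.\ has the \RN{} property; moreover in this case $\|T\|_{\calN} = |m|(\Omega)$. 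Translating this to our setting yields precisely the claim that $\cov$ is nuclear iff $c \in \Mg_1(\Omega,\M(\Omega))$, with $\|\cov\|_{\calN} = \|c\|_{\Mg}$.

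For the \emph{if} direction, the argument I would spell out is as follows. Suppose $c \in \Mg_1(\Omega,\M(\Omega))$. By definition of RNP there exists a Bochner-integrable density $\psi \colon \Omega \to \M(\Omega)$ with $dc = \psi\, d|c|$ and $\int_\Omega \|\psi\|_{\M}\, d|c| = |c|(\Omega)$. Approximate $\psi$ in $L^1(|c|;\M(\Omega))$ by simple functions $\psi_n = \sum_{k=1}^{N_n} \mu_{n,k} \, \one_{A_{n,k}}$. Each such simple approximation yields a finite-rank operator $T_n f = \sum_k \bigl(\int_{A_{n,k}} f \, d|c|\bigr)\, \mu_{n,k}$ with nuclear norm bounded by $\sum_k |c|(A_{n,k})\|\mu_{n,k}\|_{\M} \leq \int_\Omega \|\psi_n\|_{\M}\, d|c|$; passing to the limit gives $\cov$ as a limit in nuclear norm with $\|\cov\|_{\calN} \leq |c|(\Omega) = \|c\|_{\Mg}$.

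For the \emph{only if} direction, assume $\cov$ is nuclear with a representation $\cov f = \sum_i \lambda_i \sp{\nu_i,f}_{\M,C}\, \mu_i$ where $\nu_i \in \M(\Omega)$, $\mu_i \in \M(\Omega)$ are normalised and $\sum_i |\lambda_i| < \infty$. Defining $c_N \defeq \sum_{i=1}^N \lambda_i \mu_i \otimes \nu_i \in \Mg(\Omega,\M(\Omega))$ (where $\mu_i \otimes \nu_i$ is the $\M(\Omega)$-valued measure $A \mapsto \nu_i(A)\mu_i$) gives a Cauchy sequence in total variation with $|c_N|(\Omega) \leq \sum_i |\lambda_i|$, whose limit is the unique representing measure of $\cov$, so $c \in \Mg(\Omega,\M(\Omega))$ has bounded variation with $\|c\|_{\Mg} \leq \|\cov\|_{\calN}$. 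The RNP follows because each $\mu_i \otimes \nu_i$ has a trivial Bochner density $\mu_i$ w.r.t.\ $|\nu_i|$, and absolute summability upgrades this to an RNP density for $c$ w.r.t.\ $|c|$. Combined with the previous inequality we obtain the claimed equality of norms.

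The main obstacle is being careful about which version of the RNP is meant: here the relevant property is that of the vector measure $c$ itself (Bochner representability of $c$ against its total variation $|c|$), rather than an RNP property of the target space $\M(\Omega)$, which in fact fails. The correct reading makes the equivalence a clean application of Ryan's proposition; the rest is bookkeeping between the projective tensor $c \in \symprojtenprod{\M(\Omega)}$ used in \cref{sec:spaces-with-preduals} and its interpretation as a vector measure in $\Mg_1(\Omega,\M(\Omega))$.
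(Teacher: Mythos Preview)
Your proposal is correct and matches the paper's approach exactly: the paper does not give a proof at all but simply cites \cite[Prop.~5.30]{ryan2002book}, which is precisely the reduction you identify in your first paragraph. Your additional sketch of the two directions of Ryan's result is accurate (and your remark that the relevant \RN{} property is that of the individual measure $c$, not of the target space $\M(\Omega)$, is a useful clarification), but it goes beyond what the paper itself provides.
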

\noindent By \cref{thm:projtenprod-measures-RNP} we have that $\Mg_1(\Omega,\M(\Omega)) = \projtenprod{\M(\Omega)}{\M(\Omega)}$, hence this is just another way of writing~\eqref{eq:c-in-projtenprod}.

The \RN{} property can be ensured by construction. Let $\nu \in \P(\Omega)$ be a probability measure on $\Omega$ and $g \in L^1_\nu (D, \M(\Omega))$ a Bochner integrable function. Then 
\begin{equation}
    \d c \defeq g\d\nu \in \Mg_1(\Omega,\M(\Omega))
\end{equation}
has the \RN{} property and $\norm{\cov}_{\calN} = \norm{g}_{L^1}$.

There are many ways to construct a basis in $C(\Omega)$. For a cube $\Omega = [0,1]^n$, we mention the basis of \emph{Faber–Schauder functions}~\cite{schauder:1927}, see also \cite{ryll:1973}. Let
\begin{equation}\label{eq:mother-wavelet}
    \psi(x) \defeq 
\begin{cases}
    1 - \abs{x}, \quad & x \in [-1,1], \\
    0 & \text{otherwise}
\end{cases}
\end{equation}
be the distance function of the interval $[-1,1]$. Consider the following dyadic system on the interval~$[0,1]$
\begin{equation*}
    D_0 \defeq \{0,1\}; \quad D_k \defeq \{(2p-1)2^{-k}\}_{p=1,...,2^{k-1}}; \quad D \defeq \bigcup\nolimits_{k=0}^\infty D_k,
\end{equation*}
and let
\begin{equation*}
    D^n_k \defeq \{\tau = (\tau_1,...,\tau_n) \in D^n \colon \tau_i \in \bigcup\nolimits_{j=0}^k D_j \text{ and $\exists \, i_0$ s.t. $\tau_{i_0} \in D_k$} \}.
\end{equation*}
That is, $k$ is the highest resolution in $D^n_k$. For any multi-index $\tau \in D_k^n$ define
\begin{equation}\label{eq:Schauder-functions}
    f_\tau(x) \defeq \prod\nolimits_{i=1}^n \psi(2^k(x_i - \tau_i)), \quad x=(x_1,...,x_n) \in [0,1]^n.
\end{equation}
By \cite[Prop. 7.1]{ryll:1973}, the system $\{f_\tau\}_{\tau \in D^n}$ is a basis in $C([0,1]^n)$. It is, in fact, a system of wavelets whose mother wavelet is the distance function~\eqref{eq:mother-wavelet}. 
This basis will play an important role in \cref{sec:Lipschitz}, where we will consider sampling in Lipschitz and H\"{o}lder spaces.

The coefficient functionals corresponding to this basis are as follows~\cite[Prop. 7.1]{ryll:1973}. Let ${\eps=(\eps_1,...,\eps_n) \in \{-1,1\}^n}$, $\tau = (\tau_1,...,\tau_n) \in D_k^n$ and $\tau^\eps = (\tau_1^\eps,...,\tau_n^\eps)$, where
\begin{equation*}
    \tau_i^\eps \defeq 
    \begin{cases}
        \tau + \eps_i \, 2^{-k}, \quad & \tau_i \in D_k, \\
        \tau_i, \quad & \tau_i \in \bigcup\nolimits_{j=0}^{k-1} D_j.
    \end{cases}
\end{equation*}
The coefficient functionals $\{\mu_\tau\}_{\tau \in D^n} \subset \M(\Omega)$ corresponding to the basis functions~\eqref{eq:Schauder-functions} are
\begin{equation}\label{eq:Schauder-coeff-functionals}
\mu_\tau \defeq 
\begin{cases}
\delta_\tau, \quad & \tau \in D_0^n, \\
2^{-n} \sum\limits_{\eps \in \{-1,1\}^n} (\delta_\tau - \delta_{\tau^\eps}), \quad & \tau \in \bigcup\nolimits_{j=1}^\infty D_j^n,
\end{cases}
\end{equation}
where $\delta_x$ is the Dirac measure at $x \in \Omega$. Hence, the coefficient functionals correspond to either point evaluations or differences of point evaluations.

We now consider some examples for Gaussian measures on the space of Radon measures.

\begin{example}[Gaussian covariance]
A very widely used covariance operator for Gaussian measures is the so-called \emph{Gaussian (or square-exponential) covariance} $\mathcal{C}$, which is usually defined by
$$
\mathcal{C}f(x') = \int_\Omega f(x) \exp\left(-\frac{1}{2}\|x - x'\|^2 \right) \d x.
$$

In the terms of this section, we can write it in the following way.
Let $\mu$ be the Lebesgue measure on $(D, \mathcal{B}D)$ and let $g \in L^1_\mu (D, \M(\Omega))$ be given by
$$
D \ni z \mapsto k \cdot \mathrm{N}(z, \mathrm{Id}_d)(\cdot \cap D),
$$
for some constant $k > 0$.
Hence, the measure-valued density $g$ can be written as a Gaussian measure truncated on $\Omega$ multiplied by a prefactor with variable mean.
\end{example}
The samples of the Gaussian measure with Gaussian covariance are highly regular and, thus, may not be the best example for sampling a Radon measure. Instead, we can consider the following example, which gives a definition of Gaussian white noise on the space of Radon measures.
\begin{example}[Gaussian white noise]
Let $\mu \in \M(\Omega)$ be a positive, finite measure on $\Omega$.  We consider the covariance kernel $c \in \Mg_1(\Omega,\M(\Omega))$, where $c$ is given by
$$
\mathcal{B}\Omega \times \mathcal{B}\Omega \ni (A,B) \mapsto  \mu(A \cap B). 
$$
When testing the corresponding operator with predual functions $\eta_1, \eta_2 \in \predual{U}$, of course, we obtain
$$
\sp{\eta_2, \cov \eta_1}_{\predual{U},U} = \int_{\Omega} \eta_1 \eta_2 \mathrm{d}\mu.
$$
One can easily see that  $c$ satisfies the Radon-Nikod\`ym property, by setting $\nu := \mu$ and 
\begin{equation*}
    g := (\Omega \ni \omega \mapsto (\mathcal{B}\Omega \ni A \mapsto \delta_\omega(A))).    
\end{equation*}

If either $\Omega$ is countable and $\mu$ is the counting measure or $\Omega$ contains an open set and $\mu$ is the $n$-dimensional Lebesgue measure, we refer to a random field with covariance kernel $c$ as \emph{Gaussian white noise}. Otherwise, we speak of \emph{spatially  inhomogeneous Gaussian white noise}. Gaussian white noise can also be defined on other function spaces, see \cite{kuo:1996}.
\end{example}

%%%%%%%%%%%%%%%%%%%
\subsection{Sampling H\"{o}lder functions}  \label{sec:Lipschitz}

We start this section by recalling some facts from the theory of Lipschitz spaces. 
Let $(X,d)$ or simply $X$ be a compact metric space of diameter at most $2$. 
Let $\Lip(X) \subset C(X)$ be the space of Lipschitz continuous functions $X \to \R$ equipped with the following norm
\begin{equation*}
    \norm{f}_{\Lip} \defeq \max \{\norm{f}_{\infty}, L(f)\},
\end{equation*}
where $\norm{\cdot}_\infty$ is the supremum norm and
\begin{equation*}
    L(f) \defeq \sup_{x,x' \in X} \frac{\abs{f(x) - f(x')}}{d(x,x')}
\end{equation*}
is the Lipschitz constant of $f$. If $0<\alpha<1$ then the space $\Lip(X^\alpha)$ is the space of $\alpha$-H\"{o}lder continuous functions on $X$ with respect to the original metric $d$ (as well as the space of Lipschitz functions with respect to $d^\alpha$). 

If $e \in X$ is a distinguished \emph{base point} (that is, $X$ is a pointed metric space), then the subspace of all functions that vanish at $e$ is denoted by
\begin{equation*}
    \Lip_0(X) \defeq \{f \in \Lip(X) \colon f(e) = 0\}.
\end{equation*}
An equivalent norm on $\Lip_0$ is given by the Lipschitz constant
\begin{equation*}
    \norm{f}_{\Lip_0} \defeq L(f).
\end{equation*}

The next result shows that $\Lip$ spaces can be thought of as a certain special case of $\Lip_0$ spaces.
\begin{theorem}[{\cite[Prop. 2.13]{weaver:2018}}]
Let $(X,d)$ be a complete metric space whose diameter is at most~$2$ and let $X^e$ be a pointed metric space consisting of $X$ together with, as base point, a new element~$\{e\}$ 
\begin{equation*}
    X^e \defeq X \cup \{e\}
\end{equation*}
and a new metric $d'$  such that $d'(x,x') = d(x,x')$ for all $x,x' \in X$ and
$d'(x,e) = 1$ for all $x \in X$. 
Then $\Lip(X,d)$ can be naturally identified with $\Lip_0(X^e,d')$.
\end{theorem}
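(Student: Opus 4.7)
The plan is to exhibit an explicit natural linear map $\Phi \colon \Lip(X,d) \to \Lip_0(X^e,d')$ and show it is an isometric bijection (with respect to $\|\cdot\|_{\Lip}$ on the domain and $L(\cdot) = \|\cdot\|_{\Lip_0}$ on the codomain). Before that, I would briefly verify that $d'$ is actually a metric on $X^e$: symmetry and positive-definiteness are immediate, and the only non-trivial triangle inequality is $d'(x,x') \leq d'(x,e) + d'(e,x') = 2$ for $x,x' \in X$, which is exactly where the diameter-at-most-$2$ assumption is used.

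The map is the obvious extension-by-zero: for $f \in \Lip(X,d)$ set $(\Phi f)(x) := f(x)$ for $x \in X$ and $(\Phi f)(e) := 0$. Linearity and the property $(\Phi f)(e) = 0$ are clear. To show $\Phi f$ is Lipschitz on $(X^e,d')$ and compute its constant, I would estimate the difference quotient in two cases: for $x,x' \in X$ one has $|(\Phi f)(x) - (\Phi f)(x')|/d'(x,x') = |f(x)-f(x')|/d(x,x') \leq L(f)$, and for the pair $(x,e)$ with $x \in X$ one has $|(\Phi f)(x) - (\Phi f)(e)|/d'(x,e) = |f(x)| \leq \|f\|_\infty$. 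Taking the supremum gives $L(\Phi f) \leq \max\{L(f),\|f\|_\infty\} = \|f\|_{\Lip}$, and the reverse inequality follows from the same two cases, yielding $L(\Phi f) = \|f\|_{\Lip}$.

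For surjectivity, given $g \in \Lip_0(X^e,d')$, define $f := g|_X$. Then $L(f) \leq L(g)$ trivially, and $\|f\|_\infty = \sup_{x \in X}|g(x)-g(e)| \leq L(g)\cdot 1$, so $f \in \Lip(X,d)$ with $\|f\|_{\Lip} \leq L(g)$; since $\Phi f$ agrees with $g$ on $X$ and both vanish at $e$, we have $\Phi f = g$. Combined with the isometry computed above, this shows $\Phi$ is a surjective linear isometry, hence the desired natural identification.

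There is no serious obstacle: the argument is a routine verification. The only subtle point worth highlighting is the role of the diameter bound — without it $d'$ would fail the triangle inequality, and the identification would break at exactly the step where the choice $d'(x,e)=1$ needs to be compatible with distances within $X$. Everything else reduces to a direct calculation of $L(\Phi f)$ and inversion on $X$.
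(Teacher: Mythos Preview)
Your argument is correct and is precisely the natural proof: define the extension-by-zero map, compute $L(\Phi f)=\max\{L(f),\|f\|_\infty\}=\|f\|_{\Lip}$ by splitting into the cases $x,x'\in X$ and $(x,e)$, and invert by restriction. Note, however, that the paper does not supply its own proof of this theorem; it is simply quoted from \cite[Prop.~2.13]{weaver:2018}, where the argument is essentially the one you wrote (including the observation that the diameter bound is exactly what makes $d'$ satisfy the triangle inequality). So there is nothing to compare against beyond saying that your proof matches the standard one referenced by the paper.
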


Lipschitz spaces are always dual spaces, and in many cases the predual is unique. Our case where the underlying metric space has finite diameter is one of such cases~\cite[Sec. 3.4]{weaver:2018}.

The predual of $\Lip(X)$ is known as the Arens-Eells space or the Lipschitz-free space and can be seen as the completion of the space of zero-mean Radon measures $\M_0(X)$ with respect to the Kantorovich-Rubinstein norm
\begin{equation*}
    \norm{\mu}_{KR} \defeq \sup\left\lbrace \int_X u \d\mu \st u \in \Lip(X),\; L(u) \leq 1, \;\norm{u}_\infty\leq 1\right\rbrace.
\end{equation*}
Another expression for this norm and more details  can be found in~\cite[Ch. 3]{weaver:2018}.

Weak* convergence in $\Lip(X)$ can be characterised as follows.
\begin{theorem}[{\cite[Thm. 2.37 and Prop. 2.39]{weaver:2018}}] \label{thm:weak-star-lip}
Let $X$ be a pointed metric space of finite diameter. Then on bounded sets in $\Lip(X)$ its weak* topology coincides with the topology of pointwise convergence. If $X$ is compact, then it also coincides with the topology  of uniform convergence.
\end{theorem}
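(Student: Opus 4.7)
The plan is to exploit the duality $\Lip(X) \cong \AE(X)^*$ (or, via the embedding of $X$ into $X^e$, $\Lip_0(X^e) \cong \AE(X^e)^*$) and to reduce weak* convergence to testing against a suitably dense subsystem of the predual.

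First I would establish the implication \emph{weak* convergence $\Rightarrow$ pointwise convergence}, which is the easier direction and does \emph{not} require boundedness. Every evaluation functional $\delta_x$ (for $x\in X$ in the $\Lip$ setting) and every difference $\delta_x-\delta_e$ (in the $\Lip_0$ setting) defines an element of the Arens--Eells space of norm at most $\mathrm{diam}(X)$, indeed these are the prototypical ``elementary molecules" that generate $\AE(X)$. Testing weak* convergence against these functionals gives $f_n(x)\to f(x)$ for every $x\in X$.

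Next I would prove the converse direction \emph{on bounded sets}: assume $\sup_n \|f_n\|_{\Lip}\leq M$ and $f_n\to f$ pointwise. The key fact (established in Weaver's book) is that the linear span of the elementary molecules $\{\delta_x-\delta_{x'}:x,x'\in X\}$ is \emph{norm-dense} in $\AE(X)$. Fix $\eta\in \AE(X)$ and $\varepsilon>0$, and pick a finite linear combination $\eta_\varepsilon=\sum_{i=1}^N a_i(\delta_{x_i}-\delta_{x'_i})$ with $\|\eta-\eta_\varepsilon\|_{\AE}<\varepsilon$. Then
\begin{equation*}
\bigl|\sp{\eta,f_n-f}\bigr| \leq \bigl|\sp{\eta-\eta_\varepsilon,f_n-f}\bigr|+\bigl|\sp{\eta_\varepsilon,f_n-f}\bigr| \leq 2M\varepsilon+\sum_{i=1}^N |a_i|\,|(f_n-f)(x_i)-(f_n-f)(x'_i)|,
\end{equation*}
and the second term tends to $0$ by pointwise convergence (applied at the finitely many points $x_i,x'_i$), while $\varepsilon$ was arbitrary. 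This standard $\varepsilon/2$-argument yields $\sp{\eta,f_n}\to\sp{\eta,f}$ for every $\eta\in \AE(X)$, i.e.\ $f_n\wsto f$.

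Finally, for compact $X$, I would upgrade pointwise to uniform convergence via Arzelà--Ascoli. A weak*-bounded net in $\Lip(X)$ is uniformly bounded in $\|\cdot\|_\infty$ and has a common Lipschitz constant, hence is equicontinuous on the compact space $X$. Equicontinuity plus pointwise convergence on a compact set gives uniform convergence by the standard $\varepsilon/3$ argument (cover $X$ by finitely many balls on which each $f_n-f$ varies by less than $\varepsilon/3$, then use pointwise convergence at the centers). The converse implication (uniform $\Rightarrow$ pointwise) is immediate, closing the chain of equivalences.

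I expect the main subtlety to be the density of the space of finitely-supported molecules in $\AE(X)$, which is the crux of the argument in the first two paragraphs. This is a structural fact about the Arens--Eells space rather than a soft abstract-nonsense statement, and in Weaver's book it is obtained through the explicit description of the Kantorovich--Rubinstein norm; here I would simply cite it from \cite[Ch.~3]{weaver:2018}. The rest is routine bounded/unbounded bookkeeping and a classical compactness argument.
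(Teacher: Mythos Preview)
The paper does not give its own proof of this statement; it is quoted from Weaver's book \cite[Thm.~2.37 and Prop.~2.39]{weaver:2018} without argument. Your proposal is a correct reconstruction of the standard argument---density of elementary molecules in $\AE(X)$ for the weak*/pointwise equivalence, followed by Arzel\`a--Ascoli (via the $\varepsilon/3$ trick) for the upgrade to uniform convergence on compact $X$---and is essentially what one finds in Weaver. One minor remark: since the statement concerns coincidence of \emph{topologies} (not just sequential convergence), your argument should be phrased for nets throughout; you do this implicitly in the Arzel\`a--Ascoli step but write ``$f_n$'' in the first two paragraphs, so just make sure the $\varepsilon/2$ argument is stated for nets as well (it goes through verbatim, since only finitely many test points are involved).
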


For $0<\alpha<1$, we denote by $X^\alpha$ the space $(X,d^\alpha)$. It can be easily seen that $\Lip(X^\alpha)$ is the space of $\alpha$-H\"{o}lder functions on $X$ with respect to the original metric $d$. Since $X$ has a finite diameter, $\Lip(X^\alpha) \subset \Lip(X^{\alpha'})$ for $0<\alpha<\alpha'\leq1$.

The space $\Lip(X^\alpha)$, $0<\alpha<1$, has a second predual known as the \emph{little Lipschitz space} $\lip(X^\alpha)$~\cite[Ch. 4]{weaver:2018}, which consists of Lipschitz functions $f \in \Lip(X^\alpha)$ such that 
\begin{equation}
    \sup_{\substack{x,x' \in X \\ 0 < d(x,x') < \delta}} \frac{\abs{f(x)-f(x')}}{d^\alpha(x,x')} \to 0 \quad \text{as $\delta \to 0$}.
\end{equation}
Such functions are called \emph{locally flat}. For $\alpha=1$, constants are the only functions satisfying this condition. For $0<\alpha<1$ many functions satisfy this condition; for example, all piecewise linear functions on the interval $[0,1]$ are locally flat. The norm in $\lip(X^\alpha)$ coincides with that of $\Lip(X^\alpha)$.

The following result holds.
\begin{theorem}[{\cite[Thm. 8.49]{weaver:2018}}] \label{thm:AE-l1}
Suppose that there exists a bi-Lipschitz embedding of the metric space $X^\alpha$ into $\R^n$ for some $n$. Then the spaces $\lip(X^\alpha)$, $\AE(X^\alpha)$ and $\Lip(X^\alpha)$ are linearly homeomorphic to the sequence spaces $c_0$, $\ell^1$ and $\ell^\infty$, respectively.
\end{theorem}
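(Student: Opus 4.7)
The plan is to exhibit an explicit Schauder basis of $\lip(X^\alpha)$ whose coefficient map gives a linear homeomorphism onto $c_0$, and then to recover the other two isomorphisms by taking successive duals via the predual relations $\lip(X^\alpha)^\ast = \AE(X^\alpha)$ and $\AE(X^\alpha)^\ast = \Lip(X^\alpha)$ recalled above.

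First I would use the assumed bi-Lipschitz embedding to reduce (up to a norm equivalence) to the case $X^\alpha \subset [0,1]^n$, which lets me restrict the Faber--Schauder wavelets $f_\tau$ of~\eqref{eq:Schauder-functions} to $X$ and reuse the coefficient measures $\mu_\tau$ of~\eqref{eq:Schauder-coeff-functionals}. A direct computation shows that the H\"older seminorm of $f_\tau$ scales like $2^{k\alpha}$, where $k = k(\tau)$ is the dyadic resolution level. Accordingly, I would introduce the renormalised system $\tilde f_\tau \defeq 2^{-k\alpha}(f_\tau - f_\tau(e))$ in $\Lip_0(X^\alpha)$ together with its dual functionals $\tilde \mu_\tau \defeq 2^{k\alpha} \mu_\tau$; these remain biorthogonal, and both sides are uniformly bounded in their respective norms. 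The latter is visible from \eqref{eq:Schauder-coeff-functionals}: $\tilde \mu_\tau$ is a signed sum of Dirac masses at points of mutual distance $\lesssim 2^{-k}$ scaled by the factor $2^{k\alpha}$, which keeps the Kantorovich--Rubinstein norm of order one.

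The core step is then to prove that the map $f \mapsto (\sp{\tilde \mu_\tau, f})_{\tau}$ is a linear homeomorphism $\lip(X^\alpha) \to c_0$. For any $f \in \Lip(X^\alpha)$, the coefficient $\sp{\tilde\mu_\tau, f}$ is controlled by the H\"older oscillation of $f$ on a ball of radius $\sim 2^{-k}$ around $\tau$; this immediately yields the $\ell^\infty$-bound $\sup_\tau \abs{\sp{\tilde\mu_\tau,f}} \lesssim \norm{f}_{\Lip(X^\alpha)}$, and the local flatness condition upgrades this to $\sp{\tilde\mu_\tau,f} \to 0$, i.e.\ to a $c_0$-bound when $f \in \lip(X^\alpha)$. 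Conversely, given $(c_\tau) \in c_0$, the multilevel disjoint-support structure of the $\tilde f_\tau$ makes the series $\sum_\tau c_\tau \tilde f_\tau$ converge in $\Lip(X^\alpha)$ to an element of $\lip(X^\alpha)$ with norm comparable to $\sup_\tau \abs{c_\tau}$.

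Finally, two dualisations close the argument. Since $\{\tilde f_\tau\}$ is total in $\lip(X^\alpha)$, the Hahn--Banach theorem shows that the $\ell^1$-span of $\{\tilde \mu_\tau\}$ is norm-dense in $\AE(X^\alpha) = \lip(X^\alpha)^\ast$, so the identification $\lip(X^\alpha) \cong c_0$ dualises to $\AE(X^\alpha) \cong c_0^\ast = \ell^1$. Applying $\AE(X^\alpha)^\ast = \Lip(X^\alpha)$ once more yields $\Lip(X^\alpha) \cong (\ell^1)^\ast = \ell^\infty$. The main obstacle will be the reverse norm bound $\norm{f}_{\lip(X^\alpha)} \lesssim \sup_\tau \abs{\sp{\tilde\mu_\tau,f}}$: unlike the easy upper bound, this requires a telescoping argument across dyadic scales, showing that an arbitrary two-point H\"older quotient of $f$ can be dominated by the largest local coefficient. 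This is where the geometry of the embedding into $\R^n$ and the precise localisation of the distance-function wavelets are really used, and it is also where the restriction $0<\alpha<1$ matters, since for $\alpha=1$ the little-Lipschitz space contains only constants and the statement collapses.
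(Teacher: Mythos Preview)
The paper does not prove this theorem at all: it is quoted verbatim from Weaver's book (\cite[Thm.~8.49]{weaver:2018}) and used as a black box, so there is no ``paper's own proof'' to compare your proposal against. Your outline is a plausible sketch in the spirit of the Faber--Schauder machinery that the paper deploys elsewhere (e.g.\ in \cref{prop:expansion-lip}), but it is not something the authors carry out.

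That said, one point in your sketch would need care if you actually pursued it. You reduce via the bi-Lipschitz embedding to a subset of $[0,1]^n$ and then restrict the Faber--Schauder system, but the coefficient functionals $\mu_\tau$ in~\eqref{eq:Schauder-coeff-functionals} are point evaluations at dyadic nodes that typically do not lie in $X$; the biorthogonality relation $\sp{\tilde\mu_\tau,\tilde f_{\tau'}} = \delta_{\tau\tau'}$ is therefore lost upon restriction, and with it the clean coefficient map. Weaver's own proof does not go through an explicit wavelet basis on $X$ but rather through a more indirect argument (a retraction/extension construction combined with known structural results for $\lip$ spaces on subsets of $\R^n$), precisely to avoid this issue.
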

\begin{remark}
In particular, \cref{thm:AE-l1} implies that both $\lip(X^\alpha)$ and $\AE(X^\alpha)$ have bases, and all three spaces have the metric approximation property (perhaps, upon switching to an equivalent norm).
\end{remark}

%%%%%%%%%%
\subsubsection{H\"{o}lder functions on a unit cube}

If $X=[0,1]^n$ equipped with the Euclidean metric, then by~\cite[Prop. 7.1]{ryll:1973} (see also~\cite{bonic:1969}), Faber\-–Schauder functions~\eqref{eq:Schauder-functions} form a basis of $\lip(X^\alpha)$, while~\eqref{eq:Schauder-coeff-functionals} are the corresponding coefficient functionals. Note that as written in~\eqref{eq:Schauder-functions}, Faber–Schauder functions are normalised in $C([0,1]^n)$ but not in $\lip(X^\alpha)$. 

Each function $t \mapsto \psi(2^k t)$ is linear on the intervals $[-2^{-k},0]$ and $[0,2^{-k}]$ and zero outside these intervals, hence we have the following estimate for its norm in $\lip([0,2^{-k}]^\alpha)$, where $\alpha \in (0,1)$,
\begin{equation*}
    \norm{\psi(2^k \cdot)}_{\lip([0,2^{-k}]^\alpha)} = \sup_{t,t' \in [0,2^{-k}]} \frac{2^k \abs{t'-t}}{\abs{t'-t}^\alpha} = 2^k \sup_{t,t' \in [0,2^{-k}]} \abs{t'-t}^{1-\alpha} = 2^{\alpha k}.
\end{equation*}
Due to the product structure of~\eqref{eq:Schauder-functions} we have that 
\begin{equation*}
    \norm{f_\tau}_{\lip(X^\alpha)} = 2^{\alpha k}, \quad \tau \in D^n_k,
\end{equation*}
where $k$ is the highest resolution in $D^n_k$. 
Thus, the renormalised Faber–Schauder functions are given by
\begin{equation}\label{eq:Schauder-functions-renormalised}
    \tilde f_\tau(x) \defeq 2^{-\alpha k} \prod\nolimits_{i=1}^n \psi(2^k(x_i - \tau_i)), \quad \tau \in D^n_k, \;\; x=(x_1,...,x_n) \in [0,1]^n.
\end{equation}
and the renormalised coefficient functionals by

\begin{equation}\label{eq:Schauder-coeff-functionals-renormalised}
\tilde \mu_\tau \defeq 
\begin{cases}
\delta_\tau, \quad & \tau \in D_0^n, \\
2^{ \alpha k - n} \sum\limits_{\eps \in \{-1,1\}^n} (\delta_\tau - \delta_{\tau^\eps}), \quad & \tau \in \bigcup\nolimits_{j=1}^\infty D_j^n,
\end{cases} \quad \tau \in D^n_k.
\end{equation}

We have the following
\begin{proposition} \label{prop:expansion-lip}
Let $f \in \Lip(X^\alpha)$ and $\tilde f_\tau$ and $\tilde \mu_\tau$ as defined in~\eqref{eq:Schauder-functions-renormalised} and~\eqref{eq:Schauder-coeff-functionals-renormalised}, respectively. Then
\begin{equation}\label{eq:weak-star-series-Lip}
    f = \sum_\tau \sp{\tilde \mu_\tau,f} \tilde f_\tau \quad \text{weakly-* in $\Lip(X^\alpha)$},
\end{equation}
which by \cref{thm:weak-star-lip} is equivalent to uniform convergence. If, in addition, $f \in \lip(X^\alpha)$, the convergence is in the Lipschitz norm.
\end{proposition}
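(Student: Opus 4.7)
The plan is to split the statement into two cases---the ``in addition'' case $f\in\lip(X^\alpha)$, handled by a direct appeal to \cite[Prop.~7.1]{ryll:1973}, and the weak-* case $f\in\Lip(X^\alpha)$, handled by dualising the basis on the predual $\lip(X^\alpha)$---and then to pass from weak-* to uniform convergence via \cref{thm:weak-star-lip}.

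For $f\in\lip(X^\alpha)$ I would argue that \cite[Prop.~7.1]{ryll:1973} gives that the unnormalised pair $\{f_\tau,\mu_\tau\}$ of~\eqref{eq:Schauder-functions} and~\eqref{eq:Schauder-coeff-functionals} is a Schauder basis and coefficient system for $\lip(X^\alpha)$. Rescaling $f_\tau$ by $2^{-\alpha k}$ and $\mu_\tau$ by $2^{\alpha k}$ simultaneously preserves biorthogonality, so the renormalised system $\{\tilde f_\tau,\tilde\mu_\tau\}$ from~\eqref{eq:Schauder-functions-renormalised}--\eqref{eq:Schauder-coeff-functionals-renormalised} is still a Schauder pair, and convergence of the expansion in the Lipschitz norm is then the very definition of a Schauder basis.

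For general $f\in\Lip(X^\alpha)$ I would dualise. The functionals $\tilde\mu_\tau$ in~\eqref{eq:Schauder-coeff-functionals-renormalised} are finite linear combinations of Diracs of total mass zero, so they belong to $\M_0(X)\subset\AE(X^\alpha)=\lip(X^\alpha)^*$. Let
\[
Q_N:\lip(X^\alpha)\to\lip(X^\alpha),\qquad Q_Ng\defeq\sum_{|\tau|\le N}\sp{\tilde\mu_\tau,g}\tilde f_\tau,
\]
be the basis projection, uniformly bounded by the basis constant. By \cref{thm:AE-l1}, $\lip(X^\alpha)\cong c_0$ has separable dual, so $\{\tilde f_\tau\}$ is automatically shrinking, equivalently $\{\tilde\mu_\tau\}$ is a Schauder basis of $\AE(X^\alpha)$ whose basis projections are exactly $Q_N^*$. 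Hence $Q_N^*\eta\to\eta$ in $\AE(X^\alpha)$ for every $\eta$. A direct pairing calculation identifies the bidual $Q_N^{**}$ on $\Lip(X^\alpha)=\AE(X^\alpha)^*$ with the partial-sum operator $T_Nf\defeq\sum_{|\tau|\le N}\sp{\tilde\mu_\tau,f}\tilde f_\tau$, so that for every $\eta\in\AE(X^\alpha)$
\[
\sp{\eta,T_Nf}=\sp{Q_N^*\eta,f}\longrightarrow\sp{\eta,f},
\]
which is exactly the weak-* convergence~\eqref{eq:weak-star-series-Lip}. Since $\|T_N\|=\|Q_N\|$ is uniformly bounded in $N$, the partial sums remain in a fixed ball of $\Lip(X^\alpha)$, on which \cref{thm:weak-star-lip} identifies the weak-* topology with uniform convergence on the compact space $X$; this yields the uniform reformulation.

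The only non-bookkeeping step is the passage ``basis of $\lip(X^\alpha)\;\Rightarrow\;$basis of $\AE(X^\alpha)$'' via the shrinking property, which leans on separability of $\AE(X^\alpha)\cong\ell^1$ from \cref{thm:AE-l1}. This is, I expect, the only place where the restriction $0<\alpha<1$ enters essentially, since for $\alpha=1$ the little Lipschitz space is trivial and the whole predual argument collapses.
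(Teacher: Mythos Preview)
Your proof is correct and follows the same overall architecture as the paper's: handle $f\in\lip(X^\alpha)$ directly from the Schauder basis property, then for general $f\in\Lip(X^\alpha)$ establish that $\{\tilde\mu_\tau\}$ is a Schauder basis of the predual $\AE(X^\alpha)$ and dualise via the pairing $\sp{\eta,T_Nf}=\sp{Q_N^*\eta,f}\to\sp{\eta,f}$.

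The only real difference is in how the basis property of $\{\tilde\mu_\tau\}$ in $\AE(X^\alpha)$ is justified. The paper appeals to \cite[Thm.~5.21]{Heil_basis_primer}---the coefficient functionals of any Schauder basis form a basis of their closed linear span---and then argues separately that $\overline{\span}\{\tilde\mu_\tau\}=\AE(X^\alpha)$. You instead invoke \cref{thm:AE-l1} to get $\lip(X^\alpha)\cong c_0$ with separable dual $\ell^1$, and use the classical fact that any basis of a space with separable dual is shrinking. Both routes are standard; yours is more conceptual and sidesteps the density verification, while the paper's is a touch more self-contained. Your explicit remark that $\|T_N\|=\|Q_N\|$ stays uniformly bounded---needed because \cref{thm:weak-star-lip} identifies weak-* with uniform convergence only on bounded sets---is a detail the paper leaves implicit.
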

\begin{proof}
The second statement is trivial, since we already know that $\{\tilde f_\tau\}_\tau$ is a basis in $\lip(X^\alpha)$ and $\{\tilde \mu_\tau\}_\tau$ are the corresponding coefficient functionals. 
By~\cite[Thm. 5.21]{Heil_basis_primer}, the system $\{\tilde \mu_\tau\}_\tau$ is a basis of  $\cl{\span}\{\tilde \mu_\tau\}_\tau$. It is easy to see that the system $\{\tilde \mu_\tau\}_\tau$ is dense in $\AE(X^\alpha)$
\begin{equation*}
    \cl{\span}\{\tilde \mu_\tau\}_\tau = \AE(X^\alpha),
\end{equation*}
hence $\{\tilde \mu_\tau\}_\tau$ is a basis of $\AE(X^\alpha)$, whose coefficient functionals are given by $\{\tilde f_\tau\}_\tau$. Therefore, for any $\eta \in \AE(X)$ and any $f \in \Lip(X^\alpha)$ we have
\begin{equation*}
    \sp{\eta,f} = \sp{\sum_\tau \sp{\eta,\tilde f_\tau} \tilde \mu_\tau,f} = \sum_\tau \sp{\eta,\tilde f_\tau} \, \sp{\tilde \mu_\tau,f} =  \sp{\eta, \sum_\tau \sp{\tilde \mu_\tau,f} \tilde f_\tau},
\end{equation*}
which proves~\eqref{eq:weak-star-series-Lip}.
\end{proof}

We now turn to the projective tensor product $\projtenprod{\Lip(X^\alpha)}{\Lip(X^\alpha)}$. 
Using the canonical identification of $\Lip(X^\alpha \times X^\alpha)$ with the space of vector-valued functions $\Lip(X^\alpha;\Lip(X^\alpha))$ as well as the identification $\Lip(X^\alpha;\Lip(X^\alpha)) \cong \calL(\AE(X^\alpha),\Lip(X^\alpha))$ (e.g.,~\cite{garcia-lirola:2016}), we get that
\begin{eqnarray*}
    \projtenprod{\Lip(X^\alpha)}{\Lip(X^\alpha)} &=& \calN(\AE(X^\alpha),\Lip(X^\alpha)) \\
    &\subset& \calL(\AE(X^\alpha),\Lip(X^\alpha)) = \Lip(X^\alpha;\Lip(X^\alpha)) = \Lip(X^\alpha \times X^\alpha).
\end{eqnarray*}
It is well known that H\"{o}lder spaces are isomorphic to certain Besov spaces~\cite{triebel:2006}. More precisely,
\begin{equation*}
    \Lip(X^\alpha \times X^\alpha) \cong B_{\infty,\infty}^\alpha(X \times X).
\end{equation*}
Therefore, $\projtenprod{\Lip(X^\alpha)}{\Lip(X^\alpha)} \subset B_{\infty,\infty}^\alpha(X \times X)$. 
A concrete description of the projective product does not seem to be known, but the following result provides a subspace.

\begin{proposition}\label{prop:suff-cond-proj}
Let $g \in B_{1,1}^\alpha(X \times X)$. Then $g \in \projtenprod{\Lip(X^\alpha)}{\Lip(X^\alpha)}$.
\end{proposition}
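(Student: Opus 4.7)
The strategy is to construct an explicit decomposition of $g$ as a sum of rank-one tensors of Lipschitz functions with summable product of norms, using the Faber--Schauder machinery developed in \cref{sec:Lipschitz}.

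First, I would set up the hyperbolic tensor product Faber--Schauder system $\{\tilde f_\tau \otimes \tilde f_{\tau'}\}_{(\tau, \tau') \in D^n \times D^n}$ on $X \times X$, where each $\tilde f_\tau$ is the Lipschitz-normalised function of~\eqref{eq:Schauder-functions-renormalised} with $\norm{\tilde f_\tau}_{\Lip(X^\alpha)} = 1$. Each rank-one tensor $\tilde f_\tau \otimes \tilde f_{\tau'}$ then has projective tensor norm at most $1$, so it is enough to expand $g$ in this system and show that the coefficients
\[
    c_{\tau, \tau'} \defeq \sp{\tilde \mu_\tau \otimes \tilde \mu_{\tau'},\, g},
\]
built from the tensor products of the coefficient functionals~\eqref{eq:Schauder-coeff-functionals-renormalised}, are absolutely summable. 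A direct unfolding using~\eqref{eq:Schauder-coeff-functionals-renormalised} shows that, for $\tau \in D^n_{k_1}$ and $\tau' \in D^n_{k_2}$ with $k_1, k_2 \geq 1$, $c_{\tau, \tau'}$ is a weighted mixed first difference of $g$ at scales $2^{-k_1}$ and $2^{-k_2}$ in the two variable blocks, with prefactor $2^{\alpha(k_1 + k_2) - 2n}$.

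Second, I would estimate $\sum_{\tau,\tau'} |c_{\tau,\tau'}|$ by $\norm{g}_{B^\alpha_{1,1}}$. Summing $|c_{\tau,\tau'}|$ over the dyadic lattice at a fixed level $(k_1, k_2)$ produces a Riemann-type sum comparable to the mixed $L^1$-modulus of $g$ at those scales. Combined with the integral characterisation $\norm{g}_{B^\alpha_{1,1}} \sim \norm{g}_{L^1} + \int_0^\infty \omega_1(g,t)\, t^{-1-\alpha}\, dt$ and its iteration in each block of variables, this produces the required bound $\sum_{\tau,\tau'}|c_{\tau,\tau'}| \lesssim \norm{g}_{B^\alpha_{1,1}}$. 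As an alternative, one could invoke the Faber--Schauder characterisation of Besov spaces (e.g.\ Triebel, \emph{Bases in Function Spaces}) after matching the Lipschitz-normalisation used here with the standard formulations.

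Third, once this summability is established, the series
\[
    \sum_{\tau, \tau'} c_{\tau, \tau'}\, \tilde f_\tau \otimes \tilde f_{\tau'}
\]
converges absolutely in $\projtenprod{\Lip(X^\alpha)}{\Lip(X^\alpha)}$ with projective norm at most $\sum_{\tau,\tau'}|c_{\tau,\tau'}|$, and its sum equals $g$ by an argument analogous to \cref{prop:expansion-lip} applied to the two-variable setting; this yields $g \in \projtenprod{\Lip(X^\alpha)}{\Lip(X^\alpha)}$. The main obstacle is the second step: the standard Besov wavelet characterisations are stated for the isotropic Faber--Schauder basis on $[0,1]^{2n}$, whereas the natural basis here is the hyperbolic one on $X \times X$. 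A careful bookkeeping of scale-dependent normalisation constants, or a direct estimate of $|c_{\tau,\tau'}|$ via mixed $L^1$-differences of $g$ and the integral definition of $\norm{\cdot}_{B^\alpha_{1,1}}$, is needed to obtain the precise inequality with the Lipschitz normalisation adopted in the paper.
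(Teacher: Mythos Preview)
Your approach is essentially the same as the paper's: expand $g$ in the tensor Faber--Schauder system $\tilde f_\tau \otimes \tilde f_{\tau'}$, observe that each rank-one tensor has projective norm $1$, and reduce to showing that the coefficients $\sp{\tilde\mu_\tau \otimes \tilde\mu_{\tau'},g}$ lie in $\ell^1$. The only difference is in how the $\ell^1$ bound is justified: the paper goes straight to your ``alternative'' route, invoking Triebel's atomic characterisation of $B^\alpha_{1,1}$ to identify the renormalised Faber--Schauder functions $\tilde f_\tau$ as Besov atoms, so that the $\ell^1$ sum of coefficients is equivalent to $\norm{g}_{B^\alpha_{1,1}}$.

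Your concern about the hyperbolic versus isotropic basis on the product space is legitimate; the paper's proof is terse on exactly this point and simply asserts that the $\ell^1$ condition on the tensor coefficients is equivalent to $g \in B^\alpha_{1,1}(X\times X)$ without explicitly reconciling the tensor (hyperbolic) expansion with the isotropic atomic decomposition on $[0,1]^{2n}$. Your proposed direct estimate via mixed $L^1$-differences would make this step more transparent, at the cost of more bookkeeping.
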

\begin{proof}
Similarly to~\eqref{eq:weak-star-series-Lip}, we have the following expansion for any $g \in \Lip(X^\alpha \times X^\alpha)$
\begin{equation}\label{eq:weak-star-series-Lip-2d}
    g = \sum_{\tau,t} \sp{\tilde \mu_\tau \otimes \tilde \mu_t,g} \tilde f_\tau \otimes \tilde f_t \quad \text{weakly-* in $\Lip(X^\alpha \times X^\alpha)$.}
\end{equation}
Since $\norm{\tilde f_\tau \otimes \tilde f_t}_\pi = \norm{\tilde f_\tau}_{\Lip} \norm{\tilde f_t}_{\Lip} = 1$, we have
\begin{equation}\label{eq:coefs-l1}
    \norm{g}_\pi \leq \sum_{\tau,t} \abs{\sp{\tilde \mu_\tau \otimes \tilde \mu_t,g}} < \infty,
\end{equation}
where we applied H\"{o}lder's inequality to~\eqref{eq:weak-star-series-Lip-2d}.

Up to a multiplication by $(1-\alpha)$, the functions $\tilde f_\tau$ are atoms of the Besov space $B_{1,1}^\alpha(X)$ in the sense of~\cite[Def. 2.17]{triebel:2006} (with $p=1$ and $s= \sigma = \alpha$; see also Remark 2.14 in the same book). 
Therefore,~\eqref{eq:coefs-l1} is equivalent to the condition $g \in B^\alpha_{1,1} (X \times X)$.
\end{proof}

\subsubsection{Exponential covariance kernels} 
 We consider the following family of exponential kernels  on a the unit cube $X=[0,1]^n$
 \begin{equation}\label{eq:exp-kernels}
    c_\alpha(x,x') \defeq e^{-\frac{d^{2\alpha}(x,x')}{2}} = e^{-\frac{\norm{x-x'}^{2\alpha}}{2}}, \quad 0<\alpha<1,
\end{equation}
where $d^\alpha \defeq \norm{x-x'}^\alpha$ defines a metric. The kernels~\eqref{eq:exp-kernels} are Gaussian kernels on metric spaces $X^\alpha \defeq (X,d^\alpha)$. 
The kernels~\eqref{eq:exp-kernels} are symmetric and, by~\cite[Cor. 3]{schoenberg:1938}, positive definite. 
Furthermore, the following result holds.

\begin{proposition} \label{prop:g-alpha}
Let $X = [0,1]^n$ and $c_\alpha \colon X \times X \to \R$, $0<\alpha<1$, a family of functions as defined in~\eqref{eq:exp-kernels}. 
Then 
\begin{enumerate}[(i)]
    \item $c_\alpha \in \Lip(X^\alpha \times X^\alpha)$ for all $0<\alpha<1$ and $c_\alpha \in \lip(X^\gamma \times X^\gamma)$  for all $0<\gamma<\alpha$;
    \item $c_\alpha \notin \Lip(X^\beta \times X^\beta)$ for any $\alpha < \beta \leq 1$ and $c_\alpha \notin \lip(X^\alpha \times X^\alpha)$.
\end{enumerate}
\end{proposition}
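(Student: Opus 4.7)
The key analytic reduction is the elementary Lipschitz estimate $\abs{e^{-s}-e^{-t}} \leq \abs{s-t}$ for $s,t \geq 0$, which bounds the variation of $c_\alpha$ by the variation of the power-distance function $(x,y) \mapsto \tfrac12 \|x-y\|^{2\alpha}$. For part (i), I would fix two product points $(x_1,y_1),(x_2,y_2) \in X \times X$, set $r_i \defeq \|x_i-y_i\|$, and invoke the reverse triangle inequality $|r_1-r_2| \leq \|x_1-x_2\| + \|y_1-y_2\|$. It then remains to control $|r_1^{2\alpha}-r_2^{2\alpha}|$, which splits into two regimes. If $2\alpha \leq 1$, the concavity inequality $|a^{2\alpha}-b^{2\alpha}| \leq |a-b|^{2\alpha}$ for $a,b \geq 0$ is sharp; if $2\alpha > 1$, the mean value theorem together with the compactness of $X$ (to bound $\max(r_1,r_2)^{2\alpha-1}$ uniformly) yields a linear estimate. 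Combining with the bounded-diameter trick to downgrade higher Hölder exponents where needed, one arrives at an estimate of the shape $|c_\alpha(x_1,y_1) - c_\alpha(x_2,y_2)| \leq L(\|x_1-x_2\|^\alpha + \|y_1-y_2\|^\alpha)$, which is precisely the Lipschitz bound on $X^\alpha \times X^\alpha$.

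The second assertion of (i), namely $c_\alpha \in \lip(X^\gamma \times X^\gamma)$ for every $0<\gamma<\alpha$, follows from the $\alpha$-Hölder estimate above without further work: substituting the bound into the quotient defining local flatness introduces an extra factor $d^{\alpha-\gamma}$ in the numerator, which tends to zero with $d$, so the supremum over short pairs vanishes as $\delta \to 0$.

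For part (ii), I would establish the non-membership claims by exhibiting explicit test pairs along which the Hölder quotient fails to be bounded (for the $\Lip$ claim) or fails to vanish (for the $\lip$ claim). The natural candidates sit on or near the diagonal of $X \times X$, where $c_\alpha$ exhibits its worst regularity because of the cusp of $r \mapsto r^{2\alpha}$ at $r=0$. Concretely, fix $x_0$ in the interior of $X$ and, for small $h \in \R^n$, take $p = (x_0,x_0)$ and $q = (x_0+h,x_0)$. Then $c_\alpha(p) = 1$ and $c_\alpha(q) = e^{-\|h\|^{2\alpha}/2}$, so a Taylor expansion yields $|c_\alpha(p) - c_\alpha(q)| = \tfrac12 \|h\|^{2\alpha}(1+o(1))$ as $\|h\| \to 0$, while the product-metric distance equals $d_{X^\beta \times X^\beta}(p,q) = \|h\|^\beta$. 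Substituting into the appropriate Hölder quotient and reading off the exponent on $\|h\|$ delivers both the $\Lip$ non-membership for $\beta$ above the threshold and the $\lip$ non-membership at the critical exponent, the latter because the ratio stays bounded away from zero along this family.

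The main technical obstacle I anticipate is the precise bookkeeping of the exponents produced by the cusp at the diagonal against those dictated by the product-metric geometry on $X^\beta \times X^\beta$; if a single-coordinate diagonal perturbation does not saturate the Hölder threshold to exactly the value required, I would supplement the construction by perturbing both coordinates simultaneously along well-chosen directions, exploiting the distinct rates at which $c_\alpha$ varies parallel and transverse to the diagonal. The upper bounds in (i) are essentially classical; it is the sharpness assertions in (ii), where the interplay between the exponential kernel and the metric structure of $X^\beta$ is most delicate, that will require the greatest care.
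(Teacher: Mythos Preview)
Your treatment of part (i) is correct but organised differently from the paper's. You work directly on the product $X\times X$, bound $\abs{e^{-s}-e^{-t}}\le\abs{s-t}$, and then control $\abs{r_1^{2\alpha}-r_2^{2\alpha}}$ by splitting on whether $2\alpha\le1$. The paper instead fixes the second variable, proves a one-variable estimate, and lifts via the identification $\Lip(X^\alpha\times X^\alpha)\cong\Lip(X^\alpha;\Lip(X^\alpha))$; more notably, it avoids your case split by writing $d^{2\alpha}(x',y)-d^{2\alpha}(x,y)=(d^\alpha(x',y)+d^\alpha(x,y))(d^\alpha(x',y)-d^\alpha(x,y))$ and applying the reverse triangle inequality for the metric $d^\alpha$ itself (rather than for $d$) to the second factor. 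That factorisation is cleaner and makes the extra small prefactor $(d^\alpha+d^\alpha)$ explicit, but your route is equally valid and more elementary.

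Part (ii), however, has a genuine gap. Your diagonal pair $p=(x_0,x_0)$, $q=(x_0+h,x_0)$ yields $\abs{c_\alpha(p)-c_\alpha(q)}\sim\tfrac12\norm{h}^{2\alpha}$ against a product-metric distance $\norm{h}^\beta$, so the H\"older quotient scales like $\norm{h}^{2\alpha-\beta}$: this blows up only for $\beta>2\alpha$ and stays bounded away from zero only at $\beta=2\alpha$, not at the thresholds $\beta>\alpha$ and $\beta=\alpha$ asserted in the statement. Your proposed repair by perturbing both coordinates does not improve the exponent: with $q=(x_0+h,x_0+h')$ the numerator is of order $\norm{h-h'}^{2\alpha}$ while the denominator is of order $\max(\norm{h},\norm{h'})^\beta$, and optimising over $h,h'$ still leaves the exponent at $2\alpha-\beta$. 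In fact your own upper bound from part (i), namely $\abs{c_\alpha(p_1)-c_\alpha(p_2)}\le\tfrac12\abs{r_1^{2\alpha}-r_2^{2\alpha}}$ together with the concavity/MVT estimate, shows that $c_\alpha$ is globally $\min(2\alpha,1)$-H\"older on $X\times X$; hence no family of test pairs can produce the thresholds claimed in (ii). The paper's argument for (ii) proceeds differently---it posits intermediate points $x'$ with $d^\alpha(x,y)=d^\alpha(x,x')+d^\alpha(x',y)$, i.e.\ metric midpoints for the snowflaked metric $d^\alpha$---but for $0<\alpha<1$ the space $([0,1]^n,d^\alpha)$ admits no such intermediate points, so that route runs into the same obstruction.
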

\begin{proof}
A proof can be found in \cref{app:proof}.  
\end{proof}

\begin{corollary}
    Combining this result with \cref{prop:expansion-lip}, we conclude that the following series converges in the $\gamma$-H\"{o}lder norm for any $0<\gamma<\alpha$
    \begin{equation*}
        c_\alpha = \sum_{\tau,t} \sp{\tilde \mu_\tau \otimes \tilde \mu_t,c_\alpha} \tilde f_\tau \otimes \tilde f_t \quad \text{strongly in $\lip(X^\gamma \times X^\gamma)$.}
    \end{equation*}
\end{corollary}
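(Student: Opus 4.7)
The plan is to apply \cref{prop:expansion-lip} to $c_\alpha$, viewed as an element of the appropriate little-Lipschitz space on the product domain. By \cref{prop:g-alpha}(i), we have $c_\alpha \in \lip(X^\gamma \times X^\gamma)$ for every $0 < \gamma < \alpha$, after identifying $X^\gamma \times X^\gamma$ (with the product $\gamma$-H\"older metric) with the $2n$-dimensional unit cube $[0,1]^{2n}$ equipped with a $\gamma$-H\"older metric, since equivalent metrics give rise to the same H\"older class.

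I would then apply \cref{prop:expansion-lip} in a two-dimensional tensor product Faber--Schauder setting. As already used tacitly in the proof of \cref{prop:suff-cond-proj}, the system $\{\tilde f_\tau \otimes \tilde f_t\}_{\tau,t \in D^n}$, ordered via the square ordering introduced in \cref{sec:finding-tensor}, is a Schauder basis of $\lip(X^\gamma \times X^\gamma)$ with coefficient functionals $\{\tilde\mu_\tau \otimes \tilde\mu_t\}$. This follows from the product basis theorem applied to the one-dimensional Faber--Schauder basis of $\lip(X^\gamma)$ furnished by \cref{prop:expansion-lip}, together with the density of tensor combinations of Faber--Schauder atoms in $\lip(X^\gamma \times X^\gamma)$ with respect to the $\gamma$-H\"older norm. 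Since $c_\alpha \in \lip(X^\gamma \times X^\gamma)$, the corresponding basis expansion converges to $c_\alpha$ in the Lipschitz norm of this space, which is exactly the $\gamma$-H\"older norm.

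A minor bookkeeping point is that the functions $\tilde f_\tau$ and functionals $\tilde\mu_\tau$ in~\eqref{eq:Schauder-functions-renormalised}--\eqref{eq:Schauder-coeff-functionals-renormalised} are normalised with respect to the exponent $\alpha$, whereas the natural Schauder basis of $\lip(X^\gamma \times X^\gamma)$ uses $\gamma$-normalisation. The two differ by scaling factors $2^{(\alpha-\gamma)k}$ and $2^{(\alpha-\gamma)l}$ on $\tilde f_\tau$ and $\tilde\mu_\tau$, respectively, for $\tau \in D_k^n$ and $t \in D_l^n$. These factors appear with opposite signs on the basis vector and on its dual functional, so they cancel term-by-term in the product $\sp{\tilde\mu_\tau \otimes \tilde\mu_t, c_\alpha}\, \tilde f_\tau \otimes \tilde f_t$, leaving the stated series unchanged.

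The main obstacle is the product-basis step: one must identify the two-dimensional tensor product Faber--Schauder system as a genuine Schauder basis of $\lip$ on the product space (rather than only of a strictly smaller tensor product completion). This reduces to showing density of the finite linear span of Faber--Schauder tensors in $\lip(X^\gamma \times X^\gamma)$, which one can establish by approximating in each variable separately via the one-dimensional basis property and using the locally flat character of elements of $\lip$.
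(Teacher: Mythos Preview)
Your proposal is correct and takes essentially the same approach as the paper, which offers no separate proof and simply states the corollary as an immediate combination of \cref{prop:g-alpha}(i) (giving $c_\alpha \in \lip(X^\gamma \times X^\gamma)$) with the strong-convergence clause of \cref{prop:expansion-lip}. You are merely being more explicit than the paper about two details it glosses over---the $\alpha$-vs-$\gamma$ normalisation bookkeeping and the identification of the tensor product Faber--Schauder system with a Schauder basis of $\lip$ on the product cube, the latter already used tacitly in~\eqref{eq:weak-star-series-Lip-2d}.
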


Next we shall investigate whether the kernels~\eqref{eq:exp-kernels} satisfy the assumptions of \cref{prop:suff-cond-proj}.
\begin{proposition} \label{prop:exp-kernels-besov}
The kernels~\eqref{eq:exp-kernels} satisfy
\begin{equation*}
    c_\alpha \in B^\gamma_{1,1}(X \times X), \quad 0<\gamma < \alpha,
\end{equation*}
and $c_\alpha \notin B^\alpha_{1,1}(X \times X)$.
\end{proposition}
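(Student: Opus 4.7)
The plan is to use the atomic characterisation of $B^s_{1, 1}(X \times X)$ underlying the proof of \cref{prop:suff-cond-proj}: a function $g \in \Lip(X^s \times X^s)$ belongs to $B^s_{1, 1}(X \times X)$ if and only if its coefficients
\begin{equation*}
    C^{(s)}_{\tau, t} \defeq \sp{\tilde\mu^{(s)}_\tau \otimes \tilde\mu^{(s)}_t, g}
\end{equation*}
in the $s$-normalised Faber--Schauder tensor product basis are absolutely summable. Since the $\gamma$- and $\alpha$-normalised functionals differ by $\tilde\mu^{(\gamma)}_\tau = 2^{(\gamma - \alpha)k}\,\tilde\mu^{(\alpha)}_\tau$ for $\tau \in D_k^n$ with $k \geq 1$, we have $C^{(\gamma)}_{\tau, t} = 2^{(\gamma - \alpha)(k + j)}\, C^{(\alpha)}_{\tau, t}$, which converts the question of membership into a question about scale-weighted decay of the $\alpha$-normalised coefficients.

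For the negative claim, I would show that the diagonal terms $C^{(\alpha)}_{\tau, \tau}$ alone force divergence. Substituting the Taylor expansion $c_\alpha(x, x') = 1 - \tfrac{1}{2}\abs{x - x'}^{2\alpha} + O(\abs{x-x'}^{4\alpha})$ into \eqref{eq:Schauder-coeff-functionals-renormalised} at scale $h = 2^{-k}$, the prefactor $2^{2\alpha k}$ cancels $h^{2\alpha}$, leaving
\begin{equation*}
    C^{(\alpha)}_{\tau, \tau} \longrightarrow c_0(\alpha, \ell) \defeq \ell^\alpha - 2^{2\alpha - 1}\, \mean[M^\alpha], \qquad M \sim \mathrm{Bin}(\ell, \tfrac{1}{2}), \quad k \to \infty,
\end{equation*}
where $\ell = \#\{i : \tau_i \in D_k\} \in \{1, \dots, n\}$ counts the coordinates of $\tau$ at the finest dyadic level. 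Strict concavity of $x \mapsto x^\alpha$ for $0 < \alpha < 1$, combined with the non-degeneracy of $M$, yields $c_0(\alpha, \ell) \geq \ell^\alpha (1 - 2^{\alpha - 1}) > 0$ via Jensen's inequality. Since $\abs{D_k^n} \sim 2^{nk}$, the diagonal contribution already exceeds $c\sum_k 2^{nk} = \infty$, so $c_\alpha \notin B^\alpha_{1, 1}(X \times X)$.

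For the positive claim, the rescaling factor $2^{-(\alpha - \gamma)(k+j)}$ must be balanced against the combinatorial growth in the number of pairs via a two-regime estimate on $\abs{C^{(\alpha)}_{\tau, t}}$. In the near-diagonal regime $\norm{\tau - t} \leq C \, 2^{-\min(k,j)}$, \cref{prop:g-alpha}(i) bounds the $\alpha$-normalised coefficients uniformly. In the far-from-diagonal regime, the $C^\infty$ smoothness of $c_\alpha$ on $\{x \neq x'\}$ permits a second-order Taylor expansion in both variables, giving $\abs{C^{(\alpha)}_{\tau, t}} \leq C\, 2^{-(2-\alpha)(k+j)}\,\norm{\tau - t}^{2\alpha - 4}$. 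The main obstacle is the bookkeeping here: neither estimate alone suffices across all dimensions $n$, and they must be combined with a dyadic summation in $\norm{\tau - t}$ to extract absolute summability for every $\gamma < \alpha$. The negative claim is conceptually cleaner, as the diagonal divergence renders it immediate.
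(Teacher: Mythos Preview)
Your approach via the Faber--Schauder atomic decomposition is genuinely different from the paper's, which uses the $L^1$-modulus-of-continuity characterisation
\[
    \norm{g}_{B^\gamma_{1,1}} \;=\; \norm{g}_{L^1} + \int_0^1 t^{-\gamma}\,\omega(g,t)\,\frac{dt}{t}.
\]
Since $c_\alpha \in \Lip(X^\alpha \times X^\alpha)$ by \cref{prop:g-alpha}(i), one has $\omega(c_\alpha,t) \leq Ct^\alpha$, and the integral $\int_0^1 t^{\alpha-\gamma-1}\,dt$ is finite for $\gamma<\alpha$ and infinite for $\gamma=\alpha$. This dispatches the positive claim in two lines, whereas your two-regime near/far splitting of the off-diagonal coefficients is substantially harder --- and, as you yourself note, not carried out: the ``bookkeeping'' you flag is real work, particularly the dyadic summation in $\norm{\tau-t}$ across dimensions, and without it the positive direction remains a plan rather than a proof.

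For the negative claim your diagonal-coefficient computation is a legitimate and arguably cleaner argument than the paper's. The paper infers divergence of the Besov norm from $c_\alpha \notin \Lip(X^\beta \times X^\beta)$ for $\beta>\alpha$, which morally says $\omega(c_\alpha,t)$ cannot decay faster than $t^\alpha$; your explicit lower bound $C^{(\alpha)}_{\tau,\tau} \to c_0(\alpha,\ell)>0$ via Jensen on $x \mapsto x^\alpha$ makes the divergence completely concrete. So: the negative half of your proposal is fine, but for the positive half you should either complete the coefficient estimates or --- far more economically --- switch to the modulus-of-continuity definition, which turns \cref{prop:g-alpha}(i) directly into the result.
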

\begin{proof}
We use the formula~\cite[Def. 9.12]{triebel:2006} (see also Remarks 9.13 and 9.27 in the same book). Since $c_\alpha \in \Lip(X^\alpha \times X^\alpha)$, we obtain the following estimate for modulus of continuity of $c_\alpha$
\begin{eqnarray*}
  \omega(c_\alpha,t) &=& \sup_{\norm{(h,h')} \leq t} \int_{X \times X} \abs{c_\alpha(x+h,x'+h') - c_\alpha(x,x')} \,dx\,dx' \\
  &\leq& \sup_{\norm{(h,h')} \leq t} \int_{X \times X} C\norm{(h,h')}^\alpha \,dx\,dx' =  Ct^\alpha
\end{eqnarray*}
for some $C>0$. Now we get
\begin{eqnarray*}
    \norm{c_\alpha}_{B_{1,1}^\gamma} &=& \norm{c_\alpha}_{L^1} + \int_0^1 t^{-\gamma} \omega(c_\alpha,t) \frac{dt}{t} \leq \norm{c_\alpha}_{L^1} + C \int_0^1 t^{\alpha-1-\gamma} \, dt \\
    &=& \norm{c_\alpha}_{L^1} + \frac{C}{\alpha-\gamma},
\end{eqnarray*}
which is finite for any $\gamma < \alpha$. If $\gamma=\alpha$, the integral diverges and, since $c_\alpha \notin \Lip(X^\beta \times X^\beta)$ for any $\beta > \alpha$, we conclude that ${\norm{c_\alpha}_{B_{1,1}^\alpha} = \infty}$.
\end{proof}

Combining \cref{prop:exp-kernels-besov} and \cref{prop:suff-cond-proj} with the results of \cref{sec:general-theory}, we conclude that exponential covariance kernels $c_\alpha$ defined in~\eqref{eq:exp-kernels} produce samples that are $\gamma$-H\"{o}lder continuous for all $\gamma \in (0, \alpha)$.

\begin{remark} Let $\Omega \subseteq \mathbb{R}$. For $\alpha = 1/2$, the kernel~\eqref{eq:exp-kernels} is the exponential covariance kernel $c_{1/2}(x,x')=e^{-\frac{\norm{x-x'}}{2}}$, which describes the covariance of a stationary Ornstein--Uhlenbeck process. Using Theorem~\ref{thm:kolchentsov}, one can show that a path of an Ornstein--Uhlenbeck process admits a modification that is $\gamma$-H\"{o}lder continuous for $\gamma \in (0, 1/2)$, which is in agreement with our result. 
\end{remark}

%%%%%%%%%%
\subsection{A note on sampling continuous functions} \label{sec:cont}
In this section, we briefly discuss how sampling in the space of continuous functions fits into the framework  presented above.

Let $U = C(\Omega)$, where $\Omega \subset \R^n$ is compact, and $c \in \symprojtenprod{C(\Omega)}$ a covariance kernel. Since $C(\Omega)$ does not have a predual, we need to choose $U^*$ as the domain of the the covariance operator. By \cref{thm:projtenprod-nuclear} we have that $\calN(U^*,U) \cong \projtenprod{U^{**}}{U} \subset \projtenprod{U^{**}}{U^{**}}$. To retain the symmetry of the covariance kernel $c$, we therefore  need to consider it as an element of larger space, $c \in \symprojtenprod{C^{**}(\Omega)}$. As a result, the covariance operator will act  as  $\cov \colon U^* \to U^{**}$. 
In accordance with this, we need to modify~\eqref{eq:covariance-Banach-pairing-weak-star} as follows
\begin{equation}\label{eq:covariance-Banach-pairing-weak}
     \cov \colon \sp{v', \cov v} \defeq \sp{c, v \otimes_s v'} , \quad v,v' \in U^*,
 \end{equation}
where the second pairing is between the bilinear form $c$ and the tensor $v \otimes_s v'$.

From now on, we proceed similarly to \cref{sec:Lipschitz}. Since $c \in \symprojtenprod{C(\Omega)} \subset C(\Omega \times \Omega)$, we can expand the covariance kernel in the basis of Faber-Schauder functions~\eqref{eq:Schauder-functions}
\begin{equation*}
    c = \sum_{\tau,t} \sp{c, \mu_\tau \otimes \mu_t} f_\tau \otimes f_t \quad \text{strongly in $C(\Omega \times \Omega)$},
\end{equation*}
where $\mu_{\tau,t}$ are the coefficient functionals from~\eqref{eq:Schauder-coeff-functionals}. Proceeding as in \cref{sec:finding-tensor}, we can diagonalise this tensor representation and obtain Gaussian samples $\theta$ as in~\eqref{eq:theta-Banach}.

\begin{remark}
We emphasise that samples obtained in this way will not necessarily lie in $C(\Omega)$. Indeed, by \cref{thm:norm-theta-bounded-ws} the sum~\eqref{eq:theta-Banach} converges only weakly-* in $C^{**}(\Omega)$ (with probability $1$). 
A representation theorem for $C^{**}([0,1])$ can be found in~\cite{mauldin:1973}.
\end{remark}

%%%%%%%%%%
\section{Outlook} 

 We finish with a few open questions and possible directions for future research. 
 
\paragraph{Second preduals.} First, we go back to \cref{sec:Lipschitz}. Here, we have encountered a situation where the space $U=\Lip(\Omega)$ has a second predual, the little Lipschitz space. The same happens in \cref{sec:cont} with $U=C^{**}(\Omega)$. A natural question is now, whether it is it possible in this case to strengthen \cref{prop:bounded-pairings,thm:norm-theta-bounded-ws} and show that $\theta \in U$ with probability $1$.

\paragraph{Random fields with jumps.}
In this work we have focused on Gaussian measures and the generalisation to other probability distributions is interesting. Of special interest are random samples with jumps, e.g. piecewise continuous or piecewise constant samples where the subdomains on which the sampled functions are continuous/constant are also random. Chada et al. \cite{Chada2021}, for instance, discuss Cauchy random fields. It would be highly interesting to study such non-Gaussian random fields on, e.g., spaces of functions of bounded variation.

\paragraph{Conditioning.} Gaussian processes are of particular interest in Bayesian statistics and data science as it is possible in linear settings to determine conditional mean and covariance in closed form. Scovel and Owhadi \cite{Scovel18} have studied the conditioning of Gaussian random fields on Hilbert spaces. A natural next goal is to generalise their theory to our setting of Banach spaces.

%%%%%%%%%%%%%%%%%%%
\section*{Acknowledgements}
The authors thank Onur Oktay (Usak University) for suggesting the idea of \cref{prop:suff-cond-proj} and pointing us to the reference~\cite{triebel:2006}, and Nik Weaver (Washington University in St. Louis) for pointing out Theorem 8.49 in~\cite{weaver:2018}. 

The authors would like to thank the Isaac Newton Institute for Mathematical Sciences, Cambridge, for support and hospitality during the programme ``Mathematics of deep learning'' where part of this work was undertaken. This work was supported by the EPSRC grant  EP/R014604/1.

YK acknowledges  support of the EPSRC (Fellowship EP/V003615/1) and the Cantab Capital Institute for the Mathematics of Information. 
CBS acknowledges support from the Philip Leverhulme Prize, the Royal Society Wolfson Fellowship, the EPSRC advanced career fellowship EP/V029428/1, EPSRC grants EP/S026045/1 and EP/T003553/1, EP/N014588/1, EP/T017961/1, the Wellcome Innovator Award RG98755, the European Union Horizon 2020 research and innovation programme under the Marie Skodowska-Curie grant agreement No. 777826 NoMADS, the Cantab Capital Institute for the Mathematics of Information and the Alan Turing Institute.

%%%%%%%%%%
\section*{Rights retention and data access}
There is no research data associated with this paper. For the purpose of open access, the authors have submitted a preprint to arxiv and will update it as appropriate.

\printbibliography

\appendix
\section{A few facts about tensor products of Banach spaces} \label{app:tensors}
Our approach relies on tensor products of Banach spaces. We will briefly recall some important definitions and facts. In our exposition, we will follow~\cite{ryan2002book} and~\cite{floret:1997}.

Let $E$ and $F$ be Banach spaces. By $E \otimes F$ we denote the algebraic tensor product of $E$ and $F$, i.e.
the space of linear functionals on the space of bilinear forms on $E \times F$. For every $e \in E$, $f \in F$ we denote by $e \otimes f$ the following functional
\begin{equation*}
    (e \otimes f)(\mathcal A) =\sp{\mathcal A,e \otimes f} \defeq  \mathcal A(e,f),
\end{equation*}
where $\mathcal A$ is an arbitrary bilinear form  on $E \times F$. A typical tensor in $E \otimes F$ has the form
\begin{equation}\label{eq:tensor-representation}
    \omega = \sum_{i=1}^n \lambda_i e_i \otimes f_i,
\end{equation}
where $e_i \in E$ and $f_i \in F$ satisfy $\norm{e_i} = \norm{f_i} = 1$, $i=1,...,n$, and $\lambda_i$ are scalars.

There are many ways, in which the tensor product $E \otimes F$  can inherit the Banach space structure of $E$ and $F$, giving rise to different topological tensor products. We will need the following one.

\begin{definition}[Projective tensor product] \label{def:projtenprod}
Let $E$ and $F$ be Banach spaces and $E \otimes F$ their algebraic tensor product. For every tensor $\omega \in E \otimes F$ let
\begin{equation*}
    \pi(\omega) \defeq \inf \left\{\sum_{i=1}^n \abs{\lambda_i} \colon \omega = \sum_{i=1}^n \lambda_i e_i \otimes f_i \right\},
\end{equation*}
be the \emph{projective norm} of $\omega$, where the infimum is taken over all possible representations of $\omega$ in the form \eqref{eq:tensor-representation}. The completion of $E \otimes F$ with respect to this norm is called the \emph{projective tensor product} of $E$ and $F$ and denoted by
\begin{equation*}
    \projtenprod{E}{F}.
\end{equation*}
\end{definition}

\begin{definition}[Injective tensor product] \label{def:injtenprod}
Let $E$ and $F$ be Banach spaces and $E \otimes F$ their algebraic tensor product. For every tensor $\omega \in E \otimes F$ let
\begin{equation*}
    \eps(\omega) \defeq \sup \left\{\abs{\sum_{i=1}^n \lambda_i \sp{e_i,e^*} \, \sp{f_i,f^*}} \colon e^* \in B_{E^*}, \, f^* \in B_{F^*} \right\},
\end{equation*}
satisfy $\norm{e_i} = \norm{f_i} = 1$ be the \emph{injective norm} of $\omega$, where $\omega = \sum_{i=1}^n \lambda_i e_i \otimes f_i$ is any representation of $\omega$ and $B_{E^*},B_{F^*}$ are the unit balls in $E^*$ and $F^*$, respectively. The completion of $E \otimes F$ with respect to this norm is called the \emph{injective tensor product} of $E$ and $F$ and denoted by
\begin{equation*}
    \injtenprod{E}{F}.
\end{equation*}
\end{definition}

The following result gives a useful representation of the elements of a projective tensor product.
\begin{theorem}[{\cite[Prop. 2.8]{ryan2002book}}]
Let $E$ and $F$ be Banach spaces. Let $\omega \in \projtenprod{E}{F}$. Then there exist sequences $\{e_i\}_{i\in\N} \subset E$ and $\{f_i\}_{i\in\N} \subset F$ satisfying $\norm{e_i} = \norm{f_i} = 1$ such that $\omega = \sum_{i=1}^\infty \lambda_i e_i \otimes f_i$ and
\begin{equation*}
    \pi(\omega) = \inf \left\{\sum_{i=1}^n \abs{\lambda_i} \colon \omega = \sum_{i=1}^\infty e_i \otimes f_i \right\},
\end{equation*}
where the infimum is taken over all possible representations of $\omega$.
\end{theorem}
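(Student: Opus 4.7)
The plan is to establish the two inequalities defining the infimum formula for $\pi(\omega)$ separately, handling the easy direction by absolute convergence and continuity of the norm, and the hard direction by a telescoping density argument.

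\emph{The easy inequality $\pi(\omega) \leq \inf\{\cdot\}$.} Given any representation $\omega = \sum_{i=1}^\infty \lambda_i e_i \otimes f_i$ with $\norm{e_i}=\norm{f_i}=1$ and $\sum_i \abs{\lambda_i}<\infty$, I would first note that $\pi(e \otimes f) = \norm{e}\norm{f}$ on elementary tensors: the inequality $\leq$ is immediate from \cref{def:projtenprod}, and $\geq$ follows by pairing against a norm-one bilinear form $e^* \otimes f^* \in B_{E^*} \otimes B_{F^*}$ attaining $\sp{e,e^*} = \norm{e}$ and $\sp{f,f^*} = \norm{f}$ via Hahn--Banach (equivalently, using that $(\projtenprod{E}{F})^*$ contains such functionals). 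Hence the series is absolutely $\pi$-convergent in the Banach space $\projtenprod{E}{F}$, and the triangle inequality together with continuity of $\pi$ give $\pi(\omega) \leq \sum_i \abs{\lambda_i}$. Taking the infimum over all such representations yields the inequality.

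\emph{The hard inequality $\inf\{\cdot\} \leq \pi(\omega)$.} Fix $\varepsilon > 0$. Exploiting that $E \otimes F$ is dense in $\projtenprod{E}{F}$, I would first pick $\omega_1 \in E \otimes F$ so close to $\omega$ that $\pi(\omega_1) < \pi(\omega) + \varepsilon/4$ and $\pi(\omega - \omega_1) < \varepsilon/4$, and then iteratively choose $\omega_n \in E \otimes F$ for $n \geq 2$ with $\pi(\omega - \omega_n) < \varepsilon \, 2^{-(n+1)}$. The telescoping identity $\omega = \omega_1 + \sum_{n \geq 2}(\omega_n - \omega_{n-1})$ then holds in $\projtenprod{E}{F}$, and for $n \geq 2$ one has
\begin{equation*}
  \pi(\omega_n - \omega_{n-1}) \leq \pi(\omega - \omega_n) + \pi(\omega - \omega_{n-1}) < \varepsilon \, 2^{-(n-1)}.
\end{equation*}
Each $\omega_n - \omega_{n-1}$ (and $\omega_1$) lies in the algebraic tensor product, so by the very \cref{def:projtenprod} it admits a finite representation $\sum_j \lambda_{n,j} \, e_{n,j} \otimes f_{n,j}$ with $\norm{e_{n,j}}=\norm{f_{n,j}}=1$ and $\sum_j \abs{\lambda_{n,j}} < \pi(\omega_n - \omega_{n-1}) + \varepsilon \, 2^{-n}$. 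Concatenating and re-indexing these finite blocks yields a single series $\omega = \sum_i \lambda_i e_i \otimes f_i$ of normalised rank-one tensors. Summing the geometric-type bookkeeping errors gives $\sum_i \abs{\lambda_i} \leq \pi(\omega) + C\varepsilon$ for an absolute constant $C$; the resulting bound on the $\ell^1$-mass simultaneously guarantees absolute $\pi$-convergence, so the concatenated sum is indeed $\omega$. Letting $\varepsilon \to 0$ shows $\inf\{\cdot\} \leq \pi(\omega)$.

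\emph{Main obstacle.} The substantive point is the bookkeeping in the hard direction: one must arrange simultaneously that (i) the telescoping gaps $\pi(\omega_n - \omega_{n-1})$ form a summable sequence, (ii) the additional slack introduced when replacing each block by a near-optimal finite rank-one decomposition is also summable, and (iii) the very first block $\omega_1$ absorbs $\pi(\omega)$ itself to within $\varepsilon$. Once this geometric scheme is set up, the remaining arguments—absolute $\pi$-convergence of the concatenated series and the identification of its limit via the telescoping identity—are routine consequences of the completeness of $\projtenprod{E}{F}$ and the elementary-tensor norm identity used in the easy direction.
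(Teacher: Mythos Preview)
The paper does not prove this theorem; it merely cites it from Ryan's book \cite[Prop.~2.8]{ryan2002book} and states it without proof in \cref{app:tensors}. Your argument is correct and is essentially the standard proof (indeed the one in Ryan): the easy inequality follows from $\pi(e\otimes f)=\norm{e}\,\norm{f}$ and the triangle inequality, while the hard inequality is obtained by approximating $\omega$ by a telescoping sum of algebraic tensors with geometrically controlled gaps and choosing near-optimal finite rank-one decompositions for each block.
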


To every bilinear form $\mathcal A$ on $E \times F$ corresponds a linear operator $A \colon E \to F^*$ defined as follows
\begin{equation*}
    \sp{f,Ae} = \mathcal A(e,f), \quad e \in E, \,\, f \in F.
\end{equation*}
Hence, one can also speak of tensor products in terms of linear operators. If $E^*$ is a dual space, then under certain conditions the projective tensor product $\projtenprod{E^*}{F}$ can be identified with the space of \emph{nuclear operators} $\calN(E,F)$.

\begin{definition}[Nuclear operators on Banach spaces]
Let $E$ and $F$ be Banach spaces. An operator $N \colon E \to F$ is called nuclear if it can be written in the following form
\begin{equation}\label{eq:nuclear-representation}
    Ne = \sum_{i=1}^\infty \lambda_i \sp{e,e^*_i} f_i, \quad e \in E,
\end{equation}
where $\{e^*_i\}_{i\in\N} \subset E^*$ and $\{f_i\}_{i\in\N} \subset F$ satisfy $\norm{e^*_i} = \norm{f_i}=1$ and $\{\lambda_i\}_{i\in\N} \subset \R$. The nuclear norm of $N$ is given by 
\begin{equation}\label{eq:nuclear-norm-Banach}
    \norm{N}_{\calN} \defeq \inf \{\norm{\lambda}_{\ell^1} \colon Nx = \sum_{i=1}^\infty \lambda_i \sp{x,e^*_i} f_i \;\; \forall x \in E\},
\end{equation}
where the infimum is taken over all representations of the form~\eqref{eq:nuclear-representation}.
\end{definition}

\begin{definition}[Approximation property]
Let $E$ be a Banach space. If for any compact set $K \subset E$ and every $\eps>0$  there exists a finite rank operator $S \colon E \to E$ such that for every $e \in K$ it holds that $\norm{e - Se} \leq \eps$, then $E$ is said to have the approximation property. If, in addition, $\norm{S} \leq 1$ then $E$ is said to have the metric approximation property.
\end{definition}

\begin{example}
The spaces $C(K)$ (continuous functions on a compact $K$), $\M(K)$ (Radon measures on $K$), sequence spaces $\ell^p$ for $1 \leq p \leq \infty$, Lebesgue spaces $L^p(\mu)$ for $1 \leq p \leq \infty$ have the metric approximation property~\cite{ryan2002book}. All Banach spaces with a basis can be equipped with an equivalent norm, under which they will have  the metric approximation property.
\end{example}

\begin{theorem}[{\cite[Cor. 4.8]{ryan2002book}}] \label{thm:projtenprod-nuclear}
Let $E$ and $F$ be Banach spaces. If either $E^*$ or $F$ has the approximation property, then
\begin{equation*}
    \calN(E,F) = \projtenprod{E^*}{F}.
\end{equation*}
\end{theorem}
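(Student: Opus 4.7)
The plan is to establish the isometric identification $\calN(E,F) \cong \projtenprod{E^*}{F}$ by constructing an explicit quotient map, then promoting it to an isometric isomorphism using the approximation property. I would first define the canonical linearisation $J \colon \projtenprod{E^*}{F} \to \calL(E,F)$ on elementary tensors by $J(e^* \otimes f)(e) \defeq \sp{e,e^*} f$, then extend by continuity. For any representation $\omega = \sum_i \lambda_i e^*_i \otimes f_i$ with $\sum_i \abs{\lambda_i} < \infty$ and $\norm{e^*_i}=\norm{f_i}=1$, the dominated convergence / triangle inequality yields $\norm{J\omega}_\calN \leq \sum_i \abs{\lambda_i}$, and taking the infimum over representations gives $\norm{J\omega}_\calN \leq \pi(\omega)$. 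By the very definition of a nuclear operator (cf.~\eqref{eq:nuclear-representation}--\eqref{eq:nuclear-norm-Banach}), the image of $J$ is precisely $\calN(E,F)$ and $\norm{J\omega}_\calN$ realises the infimum on the right-hand side of~\eqref{eq:nuclear-norm-Banach}, so $J$ is a norm-$1$ quotient map onto $\calN(E,F)$.

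Showing that $J$ is in fact an isometric isomorphism is equivalent to showing $J$ is injective. This is the step where the approximation property enters, and it will be the main obstacle. By Hahn--Banach, injectivity is equivalent to the assertion that every $\phi \in (\projtenprod{E^*}{F})^* \cong \calL(E^*,F^*)$ vanishes on $\ker J$. So I would fix $\omega \in \projtenprod{E^*}{F}$ with $J\omega = 0$ and a bounded operator $T \colon E^* \to F^*$, and show $\sp{T,\omega} = 0$. The strategy is to approximate via finite-rank operators: writing $\omega = \sum_i \lambda_i e^*_i \otimes f_i$, the vanishing condition $\sum_i \lambda_i \sp{e,e^*_i} f_i = 0$ for all $e \in E$ says that evaluations of $\omega$ on all rank-one operators $E \to \R$ (equivalently on all $e \in E$ viewed in $E^{**}$) give zero in $F$.

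The core use of the approximation property is then to reduce the pairing with an arbitrary bounded operator to pairings one can already evaluate. If $F$ has the approximation property, for every $\eps>0$ there is a finite-rank $S \colon F \to F$ with $\norm{f_i - Sf_i} \leq \eps$ uniformly on the compact set $\{f_i\} \cup \{0\}$; composing with $T$ and using that $S \circ $ (anything) has a finite tensor representation, one computes $\sp{T, \omega} = \lim_\eps \sp{T \circ S^*, \omega}$, and the latter reduces to finite sums of scalar pairings $\sp{e,e^*_i}$ that all vanish by hypothesis. The case where $E^*$ has the approximation property is handled symmetrically, approximating the identity on $E^*$ on the compact set $\{e^*_i\}$. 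In either case we conclude $\sp{T,\omega}=0$ for every $T$, hence $\omega = 0$.

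Finally, combining injectivity with $\norm{J\omega}_\calN \leq \pi(\omega)$ and the fact that $J$ is a quotient onto $\calN(E,F)$, the open mapping theorem (or a direct infimum argument using that every nuclear representation of $J\omega$ lifts to a tensor representation of $\omega$) upgrades $J$ to an isometric isomorphism $\projtenprod{E^*}{F} \to \calN(E,F)$. The main obstacle is the approximation-property step: making the finite-rank approximation converge in the right dual topology so that the pairing $\sp{T,\omega}$ is genuinely continuous under the approximation, since absent the approximation property the kernel of $J$ can be nontrivial (this is precisely Grothendieck's characterisation of the approximation property via nuclear operators), so the argument must use the hypothesis in an essential way rather than as a convenience.
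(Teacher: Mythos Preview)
The paper does not give its own proof of this result; it is simply quoted from Ryan's book (Corollary~4.8), so there is no in-paper argument to compare against. Your outline is essentially the standard proof one finds in Ryan: define the canonical linearisation $J$, observe that its range is $\calN(E,F)$ with $\norm{J\omega}_{\calN}\leq\pi(\omega)$ by definition of the nuclear norm, and then use the approximation property to show $\ker J=\{0\}$, which upgrades the quotient map to an isometry.

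There is one genuine technical slip in your injectivity step. You fix a representation $\omega=\sum_i \lambda_i e_i^*\otimes f_i$ with $\norm{e_i^*}=\norm{f_i}=1$ and then speak of approximating the identity on ``the compact set $\{f_i\}\cup\{0\}$''. But a sequence of unit vectors in an infinite-dimensional Banach space need not be relatively compact, so the approximation property gives you nothing here as stated. The fix is to use the stronger representation (also in Ryan, Proposition~2.8): every $\omega\in\projtenprod{E^*}{F}$ can be written as $\sum_i e_i^*\otimes f_i$ with $\sum_i\norm{e_i^*}<\infty$ and $f_i\to 0$ in $F$ (or symmetrically). Then $\{f_i\}\cup\{0\}$ \emph{is} compact, the finite-rank $S$ exists, and your computation $(\mathrm{id}\otimes S)\omega=\sum_j\bigl(\sum_i\langle f_i,g_j^*\rangle e_i^*\bigr)\otimes h_j=0$ goes through, since each inner sum vanishes as an element of $E^*$ by the hypothesis $J\omega=0$. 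With that correction the argument is complete and matches the textbook proof.
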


If $E$ and $F$ possess bases, then the tensor products $\projtenprod{E}{F}$ and $\injtenprod{E}{F}$ naturally inherit them. Let $\{e_i\}_{i\in\N}$ and $\{f_j\}_{j\in\N}$ be the bases of $E$ and $F$, respectively. Let us order the tensor products $e_i \otimes f_j$ as shown in the following diagram (see~\cite{ryan2002book,grecu:2005}) 

\begin{equation}\label{diag:square-ordering}
\begin{tabular}{c c c c c c c}
     $e_1 \otimes f_1$  &               & $e_1 \otimes f_2$ &               & $e_1 \otimes f_3$ &               &   \\
                   &               & $\uparrow$      &               & $\uparrow$      &               &  \\
     $e_2 \otimes f_1$  & $\rightarrow$  & $e_2 \otimes f_2$ &               & $e_2 \otimes f_3$ &               & $\dots$   \\
                        &               &                   &               & $\uparrow$      &               & $\uparrow$ \\
     $e_3 \otimes f_1$ & $\rightarrow$   & $e_3 \otimes f_2$ & $\rightarrow$  & $e_3 \otimes f_3$ &               & $e_3 \otimes f_4$  \\
                        &               &                   &               &                   &               & $\uparrow$ \\
     $e_4 \otimes f_1$ & $\rightarrow$  & $e_4 \otimes f_2$ &  $\rightarrow$ & $e_4 \otimes f_3$ & $\rightarrow$  & $e_4 \otimes f_4$ \\
\end{tabular}
\end{equation}
This ordering is called the \emph{square ordering} and can be written s follows
\begin{equation*}  
    {e_1 \otimes f_1}, \; {e_2 \otimes f_1, \; e_2 \otimes f_2, \; e_1 \otimes f_2}, \; {e_3 \otimes f_1, \; e_3 \otimes f_2, \; e_3 \otimes f_3, \; e_2 \otimes f_3}, \; \dots
\end{equation*}

\begin{theorem}[{\cite[Prop. 4.25]{ryan2002book}}]\label{thm:product-basis}
Let $E$ and $F$ be Banach spaces with bases $\{e_i\}_{i\in\N}$ and $\{f_j\}_{j\in\N}$, respectively. Then the sequence $e_i \otimes f_j$ with square ordering is a basis for both $\projtenprod{E}{F}$ and $\injtenprod{E}{F}$, and is referred to as the \emph{tensor product basis}. 
\end{theorem}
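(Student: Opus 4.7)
The plan is to apply the standard criterion for being a Schauder basis: density of the linear span of $\{e_i\otimes f_j\}_{i,j\in\N}$ combined with uniform boundedness of the partial-sum projections $\mathcal{P}_k$ associated with the square ordering~\eqref{diag:square-ordering}. A key technical input, which I would establish first, is that for any bounded operators $A\colon E\to E$ and $B\colon F\to F$ the tensor operator $A\otimes B$ satisfies $\|A\otimes B\|_{\alpha\to\alpha}\le\|A\|\,\|B\|$ for $\alpha\in\{\pi,\eps\}$. This follows directly from Definitions~\ref{def:projtenprod} and~\ref{def:injtenprod}: for $\pi$ by applying $A\otimes B$ termwise to any representation $\omega=\sum\lambda_i e'_i\otimes f'_i$ and using $\|Ae'_i\|\,\|Bf'_i\|\le\|A\|\|B\|$; for $\eps$ by transposing into the duality pairing, where the supremum is over the unit balls of $E^*$ and $F^*$ and $\|A^*e^*\|\le\|A\|\|e^*\|$. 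With this in hand, the projective and injective cases admit parallel proofs.

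Next I would identify the partial-sum operators explicitly. Let $P_n\colon E\to E$ and $Q_n\colon F\to F$ be the canonical basis projections onto $\span\{e_1,\dots,e_n\}$ and $\span\{f_1,\dots,f_n\}$, with uniformly bounded basis constants $C_E,C_F$; in particular $\|P_{n'}-P_n\|\le 2C_E$ and similarly in $F$. Reading off the square ordering, the $n^2$-th partial sum is the product projection $\mathcal{P}_{n^2}=P_n\otimes Q_n$. For indices $k$ lying inside the $(n+1)$-th L-shape, one either adds part of the ``bottom leg'', giving
\begin{equation*}
\mathcal{P}_{n^2+j} \;=\; P_n\otimes Q_n \;+\; (P_{n+1}-P_n)\otimes Q_j,\qquad 1\le j\le n+1,
\end{equation*}
or additionally adds part of the ``right leg'', contributing a further summand $(P_n-P_{n-l})\otimes(Q_{n+1}-Q_n)$ for $1\le l\le n$. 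Hence every $\mathcal{P}_k$ is a sum of at most three operators of the form $A\otimes B$ with $\|A\|\le 2C_E$ and $\|B\|\le 2C_F$, and the tensor-operator bound yields the uniform estimate $\|\mathcal{P}_k\|\le 12\,C_E C_F$ in either norm. I would verify the decomposition on an elementary tensor $u\otimes v$ by tracing which $(i,j)$ slots have been processed at step $k$.

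Density of $\span\{e_i\otimes f_j\}$ would then follow easily: since $\pi(u\otimes v)=\eps(u\otimes v)=\|u\|\,\|v\|$, the elementary tensor $P_n u\otimes Q_n v=(P_n\otimes Q_n)(u\otimes v)$ converges to $u\otimes v$ in both norms, so $\span\{e_i\otimes f_j\}$ is dense in $E\otimes F$, which is dense in both completions by definition. A standard approximation argument---uniformly bounded operators that converge on a dense subset converge everywhere---then gives $\mathcal{P}_k\omega\to\omega$ for every $\omega$ in $\projtenprod{E}{F}$ or $\injtenprod{E}{F}$. Uniqueness of the basis coefficients would follow by testing against the biorthogonal functionals $e^*_i\otimes f^*_j$, which define bounded functionals on each tensor product via the tensor-operator bound applied to rank-one maps.

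I expect the main obstacle to be the combinatorial bookkeeping for the L-shaped intermediate partial sums: verifying that they split into a bounded number of product operators $A\otimes B$ whose component norms stay controlled by $C_E$ and $C_F$ uniformly in $k$, rather than producing an unbounded error as the indices grow. Once this decomposition is pinned down, everything else reduces to the sub-multiplicativity $\|A\otimes B\|\le\|A\|\,\|B\|$ and the standard dense-plus-uniformly-bounded approximation principle, both of which are essentially free.
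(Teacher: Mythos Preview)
Your proposal is correct and is essentially the standard argument (and, indeed, the one given in \cite[Prop.~4.25]{ryan2002book}, from which the paper merely quotes the statement without reproducing a proof). The key points---sub-multiplicativity of $A\otimes B$ in both tensor norms, the decomposition of each partial-sum operator in the square ordering as a sum of at most three product projections with component norms controlled by the basis constants, and the density/uniform-boundedness criterion---are all correctly identified, and your explicit formulas for $\mathcal{P}_{n^2+j}$ and for the right-leg contribution $(P_n-P_{n-l})\otimes(Q_{n+1}-Q_n)$ match the square ordering as written in~\eqref{diag:square-ordering}.
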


Let $\Mg(\Omega,E)$ denote the space of vector measures on $\Omega$ with values in $E$. For every $\mu \in \Mg(\Omega,E)$, let
\begin{equation*}
    \abs{\mu}_1(D) \defeq \sup \left\{\sum_{i=1}^n \norm{\mu(D_i)}_E \colon \{D_1,...,D_n\} \text{ is a partition of $D$} \right\},\quad D \in \mathcal B\Omega,
\end{equation*}
denote the \emph{variation} of $\mu$. 
The norm on $\Mg(\Omega,E)$, referred to as the \emph{variation norm}, is given by 
\begin{equation*}
    \norm{\mu}_{\Mg} \defeq \abs{\mu}_1(\Omega).
\end{equation*}
\begin{definition}[\RN{} property]
Let $E$ be a Banach space. A measure $\mu \in \Mg(\Omega,E)$ is said to have the \RN{} property if $\mu$ has bounded variation and there exists a Bochner-integrable function $g \colon \Omega \to E$ with respect to $\abs{\mu}$, called the \RN{} derivative $\frac{\d \mu}{\d\abs{\mu}}$, such that
\begin{equation*}
    \mu(D) = \int_D g \, d\abs{\mu} \quad \forall D \in \mathcal B \Omega.
\end{equation*}
\end{definition}
Let $\Mg_1(\Omega,E) \subset \Mg(\Omega,E)$ be the subspace of all measures with the \RN{} property. By~\cite[Lem. 5.21]{ryan2002book}, $\Mg_1(\Omega,E)$ is complete under the variation norm.
\begin{theorem}[{\cite[Thm. 5.22]{ryan2002book}}] \label{thm:projtenprod-measures-RNP}
Let $E$ be a Banach space. Then the projective tensor product $\projtenprod{\M(\Omega)}{E}$ is isometrically isomorphic to the Banach space $\Mg_1(\Omega,E)$ of vector measures with the \RN{} property
\begin{equation*}
    \projtenprod{\M(\Omega)}{E} = \Mg_1(\Omega,E).
\end{equation*}
\end{theorem}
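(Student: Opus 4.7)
The plan is to construct the canonical map $T \colon \projtenprod{\M(\Omega)}{E} \to \Mg_1(\Omega,E)$ and show it is an isometric isomorphism, using the Radon-Nikod\'ym property to recover a tensor decomposition from any vector measure.

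First, I would define the bilinear map $B \colon \M(\Omega) \times E \to \Mg_1(\Omega,E)$ by $B(\mu,e)(D) \defeq \mu(D)\,e$ for $D \in \mathcal B\Omega$. Each $B(\mu,e)$ is a rank-one vector measure whose variation equals $\norm{\mu}_\M \norm{e}_E$ and which trivially has the Radon-Nikod\'ym property with density $g \equiv e$ relative to $\abs{\mu}$. By the universal property of the algebraic tensor product, $B$ lifts to a linear map $T \colon \M(\Omega) \otimes E \to \Mg_1(\Omega,E)$, sending $\sum_i \lambda_i \mu_i \otimes e_i$ to the vector measure $D \mapsto \sum_i \lambda_i \mu_i(D) e_i$. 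Contractivity with respect to the projective norm follows immediately from the triangle inequality: for any representation $\omega = \sum_i \lambda_i \mu_i \otimes e_i$ with $\norm{\mu_i}_\M = \norm{e_i}_E = 1$, one has $\norm{T\omega}_\Mg \leq \sum_i \abs{\lambda_i}$, and taking the infimum over representations yields a contraction $\bar T \colon \projtenprod{\M(\Omega)}{E} \to \Mg_1(\Omega,E)$ after extending by continuity.

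The heart of the proof is showing that $\bar T$ is a surjective isometry. Given $\nu \in \Mg_1(\Omega,E)$ with Radon-Nikod\'ym derivative $g \colon \Omega \to E$ relative to $|\nu|$, Bochner's theorem provides simple approximants $g_n = \sum_{i=1}^{k_n} \chi_{D_i^n} e_i^n$, with the $D_i^n$ pairwise disjoint, such that $g_n \to g$ in $L^1_{|\nu|}(\Omega,E)$. Setting
\begin{equation*}
    \omega_n \defeq \sum_{i=1}^{k_n} |\nu|(\cdot \cap D_i^n) \otimes e_i^n \in \M(\Omega) \otimes E,
\end{equation*}
the disjointness of the $D_i^n$ gives $\pi(\omega_n) \leq \sum_i |\nu|(D_i^n) \norm{e_i^n} = \norm{g_n}_{L^1_{|\nu|}}$, while $T\omega_n$ is easily checked to have density $g_n$ with respect to $|\nu|$, so $\norm{\nu - T\omega_n}_\Mg = \norm{g - g_n}_{L^1_{|\nu|}} \to 0$. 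To get a Cauchy estimate in the projective norm I would arrange the $g_n$ to arise from a nested sequence of finite $\sigma$-algebras (a standard conditional expectation / martingale construction), which then yields $\pi(\omega_{n+m} - \omega_n) \leq \norm{g_{n+m} - g_n}_{L^1_{|\nu|}}$. The limit $\omega \in \projtenprod{\M(\Omega)}{E}$ therefore exists, satisfies $\bar T\omega = \nu$, and
\begin{equation*}
    \pi(\omega) = \lim_n \pi(\omega_n) \leq \lim_n \norm{g_n}_{L^1_{|\nu|}} = \norm{\nu}_\Mg,
\end{equation*}
which combined with contractivity establishes both surjectivity and the isometry.

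The main obstacle is the projective-norm control of $\omega_{n+m} - \omega_n$: a naive choice of simple approximants produces overlapping blocks and an unwanted factor in $\pi$ that breaks the Cauchy estimate. The remedy sketched above, namely insisting that $g_{n+1}$ refines $g_n$ via conditional expectation on an increasing filtration, makes $\omega_{n+m} - \omega_n$ expressible on the finer partition with coefficients exactly matching those of $g_{n+m} - g_n$, so the $\pi$-bound follows at once from disjointness. The remaining measure-theoretic details, in particular the construction of a filtration whose conditional expectations converge to $g$ in $L^1_{|\nu|}$, are standard once one invokes separability of the range of $g$ (a consequence of Bochner integrability).
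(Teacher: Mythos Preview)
The paper does not prove this theorem at all: it is stated in the appendix as a quoted result from \cite[Thm.~5.22]{ryan2002book} with no accompanying argument. So there is nothing in the paper to compare your proposal against.

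That said, your sketch is essentially the standard proof one finds in Ryan's book. The map $\bar T$ is built exactly as you describe, and surjectivity is obtained by approximating the Bochner-integrable density $g$ by simple functions supported on disjoint sets. Your identification of the delicate point---controlling $\pi(\omega_{n+m}-\omega_n)$---is accurate, and the filtration/conditional-expectation fix you propose is one clean way to handle it. An alternative, which avoids the martingale machinery, is to factor through the isometry $L^1_{\abs{\nu}}(\Omega,E) \cong \projtenprod{L^1_{\abs{\nu}}(\Omega)}{E}$ (this is \cite[Ex.~2.19]{ryan2002book}): once you know that identification, the map $f \mapsto f\,d\abs{\nu}$ from $L^1_{\abs{\nu}}(\Omega)$ into $\M(\Omega)$ is an isometric embedding, hence so is its projective tensor with $\mathrm{id}_E$, and the preimage of $\nu$ is simply $g$ viewed as an element of $\projtenprod{L^1_{\abs{\nu}}(\Omega)}{E} \hookrightarrow \projtenprod{\M(\Omega)}{E}$. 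This route sidesteps the Cauchy-sequence bookkeeping entirely but requires the $L^1$ tensor identification as a prerequisite; your direct approach is more self-contained.
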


Now we turn to symmetric tensor products. We will follow~\cite{floret:1997}. Consider symmetric bilinear forms on $E \times E$, i.e. such that $\mathcal A(e,e') = \mathcal A (e',e)$ for all $e,e' \in E$. The algebraic dual of this space is called the symmetric tensor product of $E$ with itself and will be denoted by
\begin{equation*}
    \otimes^{2,s} E.
\end{equation*}
Every element $\omega \in \otimes^{2,s} E$ has the following representation
\begin{equation}\label{eq:sym-tensor-representation}
    \omega = \sum_{i=1}^n \lambda_i e_i \otimes e_i
\end{equation}
where $e_i \in E$ satisfies $\norm{e_i}=1$, $i \in \N$, and $\lambda_i$ are scalars.

The symmetric projective product $\symprojtenprod{E}$ and symmetric injective product $\syminjtenprod{E}$ are defined analogously to \cref{def:projtenprod,def:injtenprod}.

The following representation holds.
\begin{theorem}[{\cite[Prop. 2.2]{floret:1997}}]
Let $E$ be a Banach space and $\omega \in \symprojtenprod{E}$. Then there exists a sequence $\{e_i\}_{i\in\N} \subset E$ satisfying $\norm{e_i}=1$ such that $\omega = \sum_{i=1}^\infty \lambda_i e_i \otimes e_i$ and
\begin{equation*}
    \pi_s(\omega) = \inf\left\{\sum_{i=1}^\infty \abs{\lambda_i} \colon \omega = \sum_{i=1}^\infty \lambda_i e_i \otimes e_i \right\},
\end{equation*}
where the infimum is taken over all possible representations of $\omega$.
\end{theorem}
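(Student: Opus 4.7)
The plan is to adapt the representation theorem for the non-symmetric projective tensor product (the proposition attributed to Proposition~2.8 of \cite{ryan2002book}, stated earlier in this appendix), inserting a polarisation step so that all representations stay in diagonal form $e_i \otimes e_i$ rather than the mixed form $e_i \otimes f_i$. First I would recall that $\pi_s$ is defined on $\otimes^{2,s} E$ precisely as the infimum of $\sum_i |\lambda_i|$ over finite diagonal representations $\sum_i \lambda_i e_i \otimes e_i$ with $\|e_i\|=1$, and that $\symprojtenprod{E}$ is by definition the completion of $\otimes^{2,s} E$ with respect to $\pi_s$. The fact that every element of $\otimes^{2,s} E$ admits \emph{some} finite diagonal representation rests on the polarisation identity
\[
e \otimes e' + e' \otimes e \;=\; \tfrac{1}{2}\bigl[(e+e')\otimes(e+e') \;-\; (e-e')\otimes(e-e')\bigr],
\]
which reduces any symmetrisation of mixed elementary tensors to a linear combination of squares, at the price of a bounded constant inflation of the coefficients.

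Given $\omega \in \symprojtenprod{E}$ and $\varepsilon > 0$, I would exploit density of $\otimes^{2,s} E$ in the completion to pick a sequence $\omega_n \in \otimes^{2,s} E$ with $\omega_0 \defeq 0$, $\pi_s(\omega - \omega_1) \leq \varepsilon$, and $\pi_s(\omega - \omega_n) \leq \varepsilon\, 2^{-n}$ for $n \geq 2$, and then telescope $\omega = \sum_{n \geq 1} \delta_n$ with $\delta_n \defeq \omega_n - \omega_{n-1}$. Each $\delta_n \in \otimes^{2,s} E$ is a finite symmetric tensor with $\pi_s(\delta_n) \leq 3\varepsilon\,2^{-n}$ for $n \geq 2$ and $\pi_s(\delta_1) \leq \pi_s(\omega) + \varepsilon$; by the first step it admits a finite diagonal representation $\delta_n = \sum_{j=1}^{k_n} \mu_j^{(n)} g_j^{(n)} \otimes g_j^{(n)}$ with $\|g_j^{(n)}\|=1$, chosen so that $\sum_j |\mu_j^{(n)}| \leq \pi_s(\delta_n) + \varepsilon\, 2^{-n}$. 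Concatenating all these finite sums across $n$ yields a single sequence $\{\lambda_i, e_i\}$ with $\sum_i |\lambda_i|$ finite and the partial sums of $\sum_i \lambda_i e_i \otimes e_i$ converging to $\omega$ in $\pi_s$. This establishes existence of the series representation and also gives $\sum_i |\lambda_i| \leq \pi_s(\omega) + C\varepsilon$ for a fixed constant $C$; letting $\varepsilon \to 0$ yields $\inf \sum_i |\lambda_i| \leq \pi_s(\omega)$. The reverse inequality $\pi_s(\omega) \leq \sum_i |\lambda_i|$ is immediate for any convergent diagonal representation, by the triangle inequality and continuity of the norm together with the bound $\|e \otimes e\|_{\pi_s} \leq \|e\|^2 = 1$.

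The main obstacle will be the bookkeeping in the last step: the polarisation identity, the telescoping construction, and the $\varepsilon\,2^{-n}$ slack allowed in each diagonal expansion all contribute small losses, and one must organise them so that the total excess of $\sum_i |\lambda_i|$ above $\pi_s(\omega)$ remains $O(\varepsilon)$. The standard device is to spend the bulk of the $\varepsilon$-budget on the first approximation $\omega_1$ and leave only a geometrically decaying tail for subsequent corrections, so that every non-principal contribution collapses into a single controllable $O(\varepsilon)$ term; once this is set up cleanly, both the existence and the norm-equality statements follow in parallel.
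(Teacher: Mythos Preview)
The paper does not prove this theorem; it is quoted verbatim from \cite[Prop.~2.2]{floret:1997} and stated without proof in the appendix. There is therefore no ``paper's own proof'' to compare against.

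That said, your sketch is the standard Grothendieck-type argument and is correct. It mirrors exactly the proof of the non-symmetric analogue \cite[Prop.~2.8]{ryan2002book} (also quoted without proof in this paper), with the only new ingredient being the polarisation identity to ensure that the diagonal-representation definition of $\pi_s$ is not vacuous. One small remark on your final paragraph: the polarisation step does \emph{not} contribute any loss in the bookkeeping. Since $\pi_s$ is by definition the infimum over diagonal representations, each $\delta_n \in \otimes^{2,s} E$ already admits a diagonal representation with $\sum_j |\mu_j^{(n)}| \leq \pi_s(\delta_n) + \varepsilon\,2^{-n}$; no conversion from mixed to diagonal form, and hence no polarisation constant, is needed at that stage. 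The only slack comes from the telescoping bounds and the $\varepsilon\,2^{-n}$ near-optimality of each finite expansion, and these sum to $O(\varepsilon)$ exactly as you say.
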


\begin{remark}
By~\cite[Prop 2.3]{floret:1997}, the symmetric projective product $\symprojtenprod{E}$ is a complemented subspace of the ``full'' projective product $\projtenprod{E}{E}$. 
\end{remark}

Similarly as the full projective product can be identified with the space of nuclear operators, the  symmetric projective product can be identified with a subspace of this space.
\begin{theorem}[similar to {\cite[Prop. 4.3]{floret:1997}}] \label{thm:symprojtenprod-nuclear}
Let $E$ be a Banach space and suppose that its dual $E^*$ has the approximation property. Then the symmetric projective tensor product $\symprojtenprod{E^*}$ can be identified with the following subspace of the space of nuclear operators $\calN(E,E^*)$
\begin{equation}\label{eq:nuclear-op-sym}
    \calN^s (E,E^*) \defeq \{N \in \calN(E,E^*) \colon Ne = \sum_{i=1}^\infty \lambda_i \sp{e,e^*_i} e^*_i, \quad e \in E\},
\end{equation}
where $\{e^*_i\}_{i\in\N} \subset E^*$ is some sequence that satisfies $\norm{e^*_i}=1$ and $\sum_{i=1}^n\abs{\lambda_i} < \infty$. 
\end{theorem}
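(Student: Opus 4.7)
The plan is to transport the identification $\calN(E,E^*) \cong \projtenprod{E^*}{E^*}$ supplied by \cref{thm:projtenprod-nuclear} to the symmetric setting, and then read off the description \eqref{eq:nuclear-op-sym} from the canonical series representation of elements of $\symprojtenprod{E^*}$.

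First, I would invoke \cref{thm:projtenprod-nuclear} with $F = E^*$: since by assumption $E^*$ has the approximation property, the map
\begin{equation*}
  \Phi \colon \projtenprod{E^*}{E^*} \longrightarrow \calN(E,E^*), \qquad
  \Phi\bigl(\textstyle\sum_i \lambda_i\, e^*_i \otimes f^*_i\bigr)(e) \defeq \sum_i \lambda_i \sp{e, e^*_i}\, f^*_i,
\end{equation*}
is an isometric isomorphism. The second step is to recall, from the Remark following the identification of $\symprojtenprod{E^*}$ (Floret's Prop.~2.3, cited just above the theorem), that $\symprojtenprod{E^*}$ embeds as a (complemented) subspace of $\projtenprod{E^*}{E^*}$, so that $\Phi$ restricts to an injective bounded linear map from $\symprojtenprod{E^*}$ into $\calN(E,E^*)$.

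The third step is to identify the image. By the representation theorem stated immediately before the statement (Floret's Prop.~2.2), any $\omega \in \symprojtenprod{E^*}$ admits a series expansion of the form $\omega = \sum_{i=1}^\infty \lambda_i\, e^*_i \otimes e^*_i$ with $\norm{e^*_i}=1$ and $\sum_i \abs{\lambda_i} < \infty$, and the infimum of such $\ell^1$-norms equals $\pi_s(\omega)$. Applying $\Phi$ to this representation yields
\begin{equation*}
  \Phi(\omega)(e) \;=\; \sum_{i=1}^\infty \lambda_i \sp{e, e^*_i}\, e^*_i, \qquad e \in E,
\end{equation*}
which is exactly an operator of the form \eqref{eq:nuclear-op-sym}. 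Hence $\Phi(\symprojtenprod{E^*}) \subseteq \calN^s(E,E^*)$. For the reverse inclusion, given $N \in \calN^s(E,E^*)$ with expansion $Ne = \sum_i \lambda_i \sp{e,e^*_i} e^*_i$ and $\sum_i |\lambda_i| < \infty$, the tensor $\sum_i \lambda_i\, e^*_i \otimes e^*_i$ lies in $\symprojtenprod{E^*}$ (since it is a convergent series in the projective norm of diagonal symmetric rank-one tensors), and it is mapped to $N$ by $\Phi$. Thus $\Phi$ furnishes the claimed identification $\symprojtenprod{E^*} \cong \calN^s(E,E^*)$.

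The main subtlety, and what I expect to be the delicate step, is ensuring that the identification is well-defined on \emph{all} of $\symprojtenprod{E^*}$ rather than only on those tensors already written in diagonal form $\sum \lambda_i e^*_i \otimes e^*_i$. Without the diagonal representation theorem, a symmetric tensor in $\projtenprod{E^*}{E^*}$ only comes with a non-diagonal expansion $\sum_i \lambda_i (e^*_i \otimes f^*_i + f^*_i \otimes e^*_i)/2$, whose image under $\Phi$ is not manifestly in $\calN^s$. This is why the cited Floret result (which uses a polarisation argument to rewrite every element of $\symprojtenprod{E^*}$ as a sum of ``squares'' $e^*_i \otimes e^*_i$ while controlling the projective norm) is essential, and it is the only nontrivial ingredient in the proof. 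The rest is bookkeeping: checking that the norm estimate $\norm{N}_{\calN} \leq \pi_s(\omega)$ follows from \eqref{eq:nuclear-norm-Banach} together with the diagonal representation, and conversely that every $N \in \calN^s$ yields a symmetric tensor of the same projective norm.
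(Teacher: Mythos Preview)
The paper does not supply its own proof of this theorem; it is stated with the attribution ``similar to \cite[Prop.~4.3]{floret:1997}'' and no further argument. Your proposal is correct and is precisely the natural way to assemble the result from the ingredients already recorded in the appendix: the isometry $\calN(E,E^*)\cong\projtenprod{E^*}{E^*}$ from \cref{thm:projtenprod-nuclear}, the complemented embedding $\symprojtenprod{E^*}\hookrightarrow\projtenprod{E^*}{E^*}$ from Floret's Prop.~2.3, and the diagonal series representation of elements of $\symprojtenprod{E^*}$ from Floret's Prop.~2.2. Your diagnosis that the diagonal representation (via polarisation) is the only genuinely nontrivial step is accurate; the rest is, as you say, bookkeeping.
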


A nuclear operator $N \in \calN^s(E,E^*)$ is called positive semidefinite if
\begin{equation*}
    \sp{e,Ne}_{E,E^*} \geq 0, \quad e \in E.
\end{equation*}
It is clear that $N \in \calN^s(E,E^*)$ is positive semidefinite if and only if the expansion coefficients $\lambda_i$ in~\eqref{eq:nuclear-op-sym}  satisfy $\lambda_i \geq 0$ for all $n$.

\begin{theorem}[{\cite[Thm. 4.6]{floret:1997}}] \label{thm:tensor-dual}
Let $E$ be a Banach space such that its dual $E^*$ has the metric approximation property. Then the following embedding
\begin{equation*}
    \symprojtenprod{E^*} \hookrightarrow (\syminjtenprod{E})^*
\end{equation*}
is a metric injection.
\end{theorem}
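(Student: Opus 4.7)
The goal is to construct an isometric embedding $J\colon \symprojtenprod{E^*} \hookrightarrow (\syminjtenprod{E})^*$. My plan mirrors the proof strategy for the asymmetric analog, namely that $\projtenprod{E^*}{E^*} \hookrightarrow (\injtenprod{E}{E})^*$ is a metric injection under the MAP on $E^*$ (the standard reference being \cite[Thm. 5.33]{ryan2002book}), and then specialises to the symmetric setting using the fact noted in the paper (via \cite[Prop. 2.3]{floret:1997}) that $\symprojtenprod{E^*}$ is a complemented subspace of $\projtenprod{E^*}{E^*}$ via the norm-one symmetrisation projection.

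First I would define the canonical bilinear pairing of elementary symmetric tensors by
\[
\langle e_1^* \otimes_s e_2^*, \, f_1 \otimes_s f_2 \rangle \defeq \tfrac{1}{2}\bigl(\langle e_1^*, f_1\rangle \langle e_2^*, f_2\rangle + \langle e_1^*, f_2\rangle \langle e_2^*, f_1\rangle\bigr)
\]
and extend by bilinearity to the algebraic symmetric tensor products. Using the representation $\omega = \sum_i \lambda_i e_i^* \otimes_s e_i^*$ and the definition of the injective symmetric norm, one checks directly that
$|\langle \omega, \tau\rangle| \leq \bigl(\sum_i |\lambda_i|\bigr) \, \eps_s(\tau)$
for any representation of $\omega$, and taking the infimum gives $|\langle \omega, \tau\rangle| \leq \pi_s(\omega)\,\eps_s(\tau)$. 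This yields a well-defined continuous map $J$ with $\|J\| \leq 1$.

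The hard direction is the reverse inequality $\pi_s(\omega) \leq \|J(\omega)\|_{(\syminjtenprod{E})^*}$, where the metric approximation property of $E^*$ enters. Given $\omega \in \symprojtenprod{E^*}$ with representation $\omega = \sum_{i=1}^\infty \lambda_i e_i^* \otimes_s e_i^*$ and $\delta>0$, I would use the MAP to choose a finite-rank contraction $T\colon E^* \to E^*$ approximating the identity on the compact set $\{e_i^*\}$ within $\delta$. The induced map $T \otimes_s T$ on $\symprojtenprod{E^*}$ has norm at most $1$ and its image $(T \otimes_s T)\omega$ lies in a finite-dimensional symmetric tensor product $\symprojtenprod{F}$ with $F \subset E^*$. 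On such finite-dimensional symmetric tensor products the duality $\symprojtenprod{F} \cong (\syminjtenprod{F_*})^*$ holds isometrically by a direct compactness argument, so there exists $\tau \in \syminjtenprod{F_*}$ with $\eps_s(\tau) \leq 1$ nearly witnessing $\pi_s((T\otimes_s T)\omega)$. Lifting $\tau$ to $\syminjtenprod{E}$ via a Hahn--Banach extension of its factors yields $\tau' \in \syminjtenprod{E}$ with $\eps_s(\tau') \leq 1$ such that $\langle J(\omega), \tau'\rangle$ is within $O(\delta)$ of $\pi_s(\omega)$. Letting $\delta \to 0$ completes the estimate, and injectivity of $J$ then follows from the isometry.

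The main obstacle is the third step: carrying out the finite-dimensional reduction while preserving symmetry. The subtle point is that one must ensure the symmetrisation projection commutes with the MAP-induced approximation $T \otimes T$ (which it does, since $T \otimes T$ acts identically on both tensor factors), and that the Hahn--Banach lift preserves the injective symmetric norm. Both facts are standard but require careful book-keeping; once established, the reduction to the finite-dimensional duality $\symprojtenprod{F} = (\syminjtenprod{F_*})^*$ makes the isometry transparent.
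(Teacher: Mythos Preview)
The paper does not supply its own proof of this theorem; it is quoted as background from \cite[Thm.~4.6]{floret:1997} in the appendix, so there is no in-paper argument to compare against. I therefore comment only on the soundness of your sketch.

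Your easy direction $\|J\|\leq 1$ is fine. The gap is in the ``Hahn--Banach lift'' of the witness $\tau\in\syminjtenprod{F_*}$ back to $\syminjtenprod{E}$. With $F=T(E^*)\subset E^*$, a functional on $F$ extended by Hahn--Banach lands in $E^{**}$, not in $E$; your lift therefore naturally produces an element of $\syminjtenprod{E^{**}}$. If instead you try to realise $F^*$ as the quotient $E/F_\perp$ (which is isometric, since $F$ is finite-dimensional and hence weak*-closed), you can pick representatives $g_k\in E$ for each factor $\phi_k\in F^*$ with $\|g_k\|\leq 1+\delta$, but this does \emph{not} control $\eps_s$ of the resulting tensor in $\syminjtenprod{E}$: the injective norm is a supremum over $B_{E^*}$, which dominates the supremum over $B_F$ that defines $\eps_s$ on $\syminjtenprod{F^*}$. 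In short, the injective tensor product respects isometric embeddings but not quotient maps, so the norm of the lifted witness can blow up and the inequality $\pi_s(\omega)\leq\|J(\omega)\|$ does not follow.

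Floret's actual argument does not attempt such a lift. It instead exploits the trace-duality description of $(\syminjtenprod{E})^*$ as the space of integral symmetric bilinear forms and shows, under the metric approximation property on $E^*$, that the integral and nuclear (i.e.\ $\pi_s$) norms coincide on symmetric tensors in $E^*$. If you want to repair your finite-dimensional reduction, you would need an additional ingredient such as the principle of local reflexivity applied carefully to the finitely many $Te_i^*$ involved in $(T\otimes_s T)\omega$, together with a separate argument that the symmetric injective norm is stable under that replacement; this is substantially more delicate than the one-line ``Hahn--Banach extension of its factors'' you propose.
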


Let $e \otimes_s e'$ denote the symmetric tensor product of $e,e' \in E$
\begin{equation*}
  e \otimes_s e' \defeq \frac12(e \otimes e' + e' \otimes e). 
\end{equation*}
If $\{e_i\}_{i\in\N}$ is a basis in $E$ then a basis in $\symprojtenprod{E}$ and $\syminjtenprod{E}$ is obtained with the following ordering of the symmetric tensor products $e_i \otimes_s e_j$, compare with~\eqref{diag:square-ordering}
\begin{equation}\label{diag:square-ordering-sym}
\begin{tabular}{c c c c c c c}
      $e_1 \otimes e_1$     &               &                       &               &                       &               &   \\
                            &               &                       &               &                       &               &   \\
     $e_2 \otimes_s e_1$    & $\rightarrow$  & $e_2 \otimes e_2$     &               &                       &               &    \\
                            &               &                       &               &                       &               & \\
     $e_3 \otimes_s e_1$    & $\rightarrow$  & $e_3 \otimes_s e_2$   & $\rightarrow$  & $e_3 \otimes e_3$     &               &   \\
                            &               &                       &               &                       &               &  \\
    $e_4 \otimes_s e_1$     & $\rightarrow$ & $e_4 \otimes_s e_2$ & $\rightarrow$  & $e_4 \otimes_s e_3$   & $\rightarrow$  & $\dots$ 
\end{tabular}
\end{equation}

\begin{theorem}[{\cite{grecu:2005}}]\label{thm:product-basis-sym}
Let $\{e_i\}_{i\in\N}$ be a basis of a Banach space $E$.  Then the sequence $e_i \otimes_s e_j$ with the above ordering is a basis for both  $\symprojtenprod{E}$ and $\syminjtenprod{E}$. 
\end{theorem}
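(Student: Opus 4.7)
The plan is to reduce the symmetric case to the non-symmetric case of \cref{thm:product-basis} via the symmetrization projection. The map $e \otimes e' \mapsto e \otimes_s e' = \tfrac12(e \otimes e' + e' \otimes e)$ extends by linearity and continuity to a projection
$$\pi_s \colon \projtenprod{E}{E} \twoheadrightarrow \symprojtenprod{E}$$
of norm $\leq 1$ that restricts to the identity on $\symprojtenprod{E}$; this is precisely the complementation statement referenced in the remark after Floret's representation theorem. Analogously, $\pi_s$ is a continuous projection $\injtenprod{E}{E} \twoheadrightarrow \syminjtenprod{E}$, so the two cases can be handled in parallel.

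Fix $\omega \in \symprojtenprod{E}$. By \cref{thm:product-basis}, there is a unique expansion
$$\omega = \sum_{k=1}^\infty \alpha_k\, e_{p(k)} \otimes e_{q(k)}$$
with convergence in $\projtenprod{E}{E}$ along the square ordering~\eqref{diag:square-ordering}. Since $\pi_s$ is continuous and $\pi_s(\omega) = \omega$, one obtains
$$\omega = \sum_{k=1}^\infty \alpha_k\, e_{p(k)} \otimes_s e_{q(k)}$$
in $\symprojtenprod{E}$. The $n$-th $L$-block of~\eqref{diag:square-ordering} contains the $2n-1$ indices with $\max(p,q) = n$: both $(i,n)$ and $(n,i)$ for $i<n$ occur and collapse under $\pi_s$ to the same symmetric tensor $e_n \otimes_s e_i$, while $(n,n)$ occurs once. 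Setting $\beta_{(i,j)} \defeq \alpha_{(i,j)} + \alpha_{(j,i)}$ for $j<i$ and $\beta_{(i,i)} \defeq \alpha_{(i,i)}$, the image of the $n^2$-th partial sum under $\pi_s$ coincides with the $\binom{n+1}{2}$-th partial sum in the symmetric ordering~\eqref{diag:square-ordering-sym}, giving convergence along the subsequence of complete $L$-blocks.

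To upgrade to convergence along every symmetric partial sum, I would invoke the basis-constant argument underlying \cref{thm:product-basis} in~\cite{ryan2002book}: since the partial-sum projections of the basis $\{e_i\}$ in $E$ are uniformly bounded, the oscillations of the partial sums in $\projtenprod{E}{E}$ within a single $L$-block are dominated by a fixed multiple of the tail of the projective series, and this bound survives applying $\pi_s$ (of norm $1$). Uniqueness of the symmetric coefficients is inherited from the non-symmetric case: any vanishing symmetric series $\sum_m \beta_m\, e_{i(m)} \otimes_s e_{j(m)} = 0$ embeds into $\projtenprod{E}{E}$ as $\sum \tfrac{\beta_m}{2}(e_{i(m)} \otimes e_{j(m)} + e_{j(m)} \otimes e_{i(m)}) = 0$, which by uniqueness of the square-ordered expansion in \cref{thm:product-basis} forces every $\beta_m$ to vanish. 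The injective case is handled identically with $\eps$ in place of $\pi$.

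The main obstacle is precisely this oscillation control within an $L$-block, i.e.\ verifying that the symmetric partial sums are uniformly bounded and Cauchy (not only along the sparse subsequence $\binom{n+1}{2}$), because the symmetric ordering~\eqref{diag:square-ordering-sym} zigzags across the lower triangle of indices in a way that does not match the traversal in~\eqref{diag:square-ordering}. Once the basis-constant estimate is established, everything else is a formal transfer from \cref{thm:product-basis} through the continuous projection~$\pi_s$.
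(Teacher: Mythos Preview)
The paper does not supply its own proof of this theorem; it is quoted as a result from~\cite{grecu:2005} and stated without argument in the appendix. There is therefore nothing in the paper to compare your proposal against.

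On the substance of your outline: the reduction via the symmetrization projection $\pi_s$ is the natural route and is essentially how Grecu and Ryan organise things, but you have correctly located the real work. Your argument as written only yields convergence of the symmetric partial sums along the subsequence $\binom{n+1}{2}$, i.e.\ at the end of each complete row in~\eqref{diag:square-ordering-sym}. To conclude that $\{e_i \otimes_s e_j\}$ is a Schauder basis you must show that \emph{every} partial-sum projection in the symmetric ordering is uniformly bounded, and this does not follow formally from applying $\pi_s$ to the square-ordered partial sums in $\projtenprod{E}{E}$: an intermediate symmetric partial sum (say, stopping at $e_n \otimes_s e_m$ with $m<n$) is not the $\pi_s$-image of any single square-ordered partial sum from~\eqref{diag:square-ordering}. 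One has to go back to the basis projections $P_n$ on $E$ and express the symmetric partial-sum operators directly as combinations of $P_i \otimes P_j$ acting on $\symprojtenprod{E}$, then bound their norms uniformly using the basis constant of $\{e_i\}$. Your uniqueness argument has a similar gap: embedding a convergent symmetric series into $\projtenprod{E}{E}$ and invoking \cref{thm:product-basis} presupposes that the embedded series converges in the square ordering, which you have not established. Both issues are handled in~\cite{grecu:2005} by working with the coordinate projections directly rather than by pushing forward through $\pi_s$.
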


%%%%%%%%%%%%%%%
\section{Proof of \cref{prop:g-alpha}} \label{app:proof}
\begin{proof}
\begin{enumerate}[(i)]
    \item \label{part-i}
    Fix $y \in X$ and consider the function $x \mapsto c_\alpha(x,y)$. The following estimate holds for any $x,x' \in X$ such that $d(x,y) \leq d(x',y)$
    \begin{eqnarray*}
        \abs{e^{-d^{2\alpha}(x,y)} - e^{-d^{2\alpha}(x',y)}} &=& e^{-d^{2\alpha}(x,y)} \abs{1 - e^{-(d^{2\alpha}(x',y)-d^{2\alpha}(x,y))}} \\
        &=& e^{-d^{2\alpha}(x,y)} \left(1 - e^{-(d^{2\alpha}(x',y)-d^{2\alpha}(x,y))}\right) \\
        &\leq& e^{-d^{2\alpha}(x,y)} (d^{2\alpha}(x',y)-d^{2\alpha}(x,y)) \\
        &\leq& e^{-d^{2\alpha}(x,y)} (d^{\alpha}(x',y)+d^{\alpha}(x,y)) (d^{\alpha}(x',y)-d^{\alpha}(x,y)) \\
        &\leq& e^{-d^{2\alpha}(x,y)} (d^{\alpha}(x',y)+d^{\alpha}(x,y)) d^{\alpha}(x,x'),
    \end{eqnarray*}
    where in the second line we used the inequality $e^t-1 \geq t$ valid for all $t \in \R$, and in the last line we use the reverse triangle inequality with respect to the metric $d^\alpha$. Proceeding similarly in the case $d(x',y) \leq d(x,y)$, we get
    \begin{equation}\label{eq:est-exp-kernel}
        \abs{e^{-d^{2\alpha}(x,y)} - e^{-d^{2\alpha}(x',y)}} \leq e^{-(\min\{d(x,y),d(x',y)\})^{2\alpha}} (d^{\alpha}(x',y)+d^{\alpha}(x,y)) d^{\alpha}(x,x').
    \end{equation}
    Dividing this estimate by $d^{\alpha}(x,x')$ we get 
    \begin{equation*}
        \sup_{p,p' \in X } \frac{\abs{e^{-d^{2\alpha}(p,y)} - e^{-d^{2\alpha}(p',y)}}}{d^{\alpha}(x,x')} \leq \sup_{x,x' \in X } e^{-(\min\{d(x,y),d(x',y)\})^{2\alpha}} (d^{\alpha}(x',y)+d^{\alpha}(x,y)) \leq c_\alpha,
    \end{equation*}
    where the constant $c_\alpha$ depends of the diameter of $X$. Hence, $c_\alpha(\cdot,y) \in \Lip(X^\alpha)$. Similarly, dividing~\eqref{eq:est-exp-kernel} by $d^{\gamma}(x,x')$ with $0 < \gamma < \alpha$, we get
    \begin{equation*}
        \frac{\abs{e^{-d^{2\alpha}(x,y)} - e^{-d^{2\alpha}(x',y)}}}{d^{\gamma}(x,x')} \leq e^{-(\min\{d(x,y),d(x',y)\})^{2\alpha}} (d^{\alpha}(x',y)+d^{\alpha}(x,y)) d^{\alpha-\gamma}(x,x') \to 0
    \end{equation*}
    as $d(x,x') \to 0$. Therefore, $c_\alpha(\cdot,y) \in \lip(X^\gamma)$ for all $0 < \gamma < \alpha$.
    
    Using the canonical identification of $\Lip(X^\alpha \times X^\alpha)$ with the space of vector-valued Lip\-schitz functions  $\Lip(X^\alpha,\Lip(X^\alpha))$ and repeating the computations with appropriate minor modifications, we obtain the claim.
    
    \item  Fix $y \in X$ and let $x,x' \in X$ be such that  $d(x',y) \leq d(x,y)$ and $d^\alpha(x,y) = d^\alpha(x,x') + d^\alpha(x',y)$ (that is, $x'$ lies on a $d^\alpha$-geodesic connecting $x$ and $y$). Then
    \begin{eqnarray*}
        \abs{e^{-d^{2\alpha}(x',y)} - e^{-d^{2\alpha}(x,y)}} &=& e^{-d^{2\alpha}(x,y)} \abs{e^{-(d^{2\alpha}(x',y)-d^{2\alpha}(x,y))}-1} \\
        &=& e^{-d^{2\alpha}(x,y)} \left(e^{-(d^{2\alpha}(x',y)-d^{2\alpha}(x,y))}-1 \right) \\
        &\geq& e^{-d^{2\alpha}(x,y)} (d^{2\alpha}(x,y)-d^{2\alpha}(x',y)) \\
        &=& e^{-d^{2\alpha}(x,y)} (d^{\alpha}(x,y)+d^{\alpha}(x',y)) \, (d^{\alpha}(x,y)-d^{\alpha}(x',y)).
    \end{eqnarray*}
    Dividing both sides by $d^{\beta}(x,x')$, $\alpha\leq\beta\leq1$, we get
    \begin{eqnarray*}
        \frac{\abs{e^{-d^{2\alpha}(x',y)} - e^{-d^{2\alpha}(x,y)}}}{d^{\beta}(x,x')} &\geq& e^{-d^{2\alpha}(x,y)} (d^{\alpha}(x,y)+d^{\alpha}(x',y)) \frac{d^{\alpha}(x,y)-d^{\alpha}(x',y)}{d^{\beta}(x,x')} \\
        &=& e^{-d^{2\alpha}(x,y)} (d^{\alpha}(x,y)+d^{\alpha}(x',y)) \frac{d^{\alpha}(x,x')}{d^{\beta}(x,x')}.
    \end{eqnarray*}
    
    Consider two sequences $\{x_n,x_n'\}_{n\in\N} \subset X$ such that $d(x_n,y) \to 0$ while $d(x_n',y) \leq d(x_n,y)$ and $d^\alpha(x_n,y) = d^\alpha(x_n,x_n') + d^\alpha(x_n',y)$. That is, $x_n,x_n' \to y$ along the $d^\alpha$-geodesic connecting $x$ and $y$. Then we have
    \begin{eqnarray*}
        \sup_{p,p' \in X } \frac{\abs{e^{-d^{2\alpha}(p,y)} - e^{-d^{2\alpha}(p',y)}}}{d^{\beta}(x,x')} &\geq& \limsup_{n\in\N} \left(e^{-d^{2\alpha}(x_n,y)} (d^{\alpha}(x_n,y)+d^{\alpha}(x_n',y)) \frac{d^{\alpha}(x_n,x_n')}{d^{\beta}(x_n,x_n')} \right) \\
        &=& 
        \infty
    \end{eqnarray*}
    for $\alpha<\beta\leq1$. Hence, $c_\alpha(\cdot,y) \notin \Lip(X^\beta)$. Similarly, taking $\beta=\alpha$, we get
    \begin{eqnarray*}
    \sup_{\substack{p,p' \in X \\ 0 < d(p,p') < \delta}} \frac{\abs{e^{-d^{2\alpha}(p,y)} - e^{-d^{2\alpha}(p',y)}}}{d^{\beta}(x,x')} &\geq& \limsup_{n\in\N} \left(e^{-d^{2\alpha}(x_n,y)} (d^{\alpha}(x_n,y)+d^{\alpha}(x_n',y)) \frac{d^{\alpha}(x_n,x_n')}{d^{\alpha}(x_n,x_n')} \right) \\
    &=&
    \tilde c_\alpha>0 \quad \text{for all $\delta>0$}, 
    \end{eqnarray*}
    where the constant $\tilde c_\alpha$ depends of the diameter of $X$. Therefore, $c_\alpha(\cdot,y) \notin \lip(X^\alpha)$.

    Using the canonical identification of $\Lip(X^\alpha \times X^\alpha)$ with $\Lip(X^\alpha,\Lip(X^\alpha))$, we extend this to the function $c_\alpha(\cdot,\cdot)$, which completes the proof.     
\end{enumerate}

\end{proof}

\begin{remark}
One can see from the proof that the result actually holds for any metric space $X$ (which has to be path-connected for the second statement).
\end{remark}

\begin{corollary}
As a corollary, we get the following  estimate valid for all $x,x',y \in X$ and any $0<\alpha\leq 1$
\begin{eqnarray*}
        (d^{\alpha}(x',y)+d^{\alpha}(x,y)) e^{-(\max\{d(x,y),d(x',y)\})^{2\alpha}} &\leq& \frac{\abs{e^{-d^{2\alpha}(x,y)} - e^{-d^{2\alpha}(x',y)}}}{ d^{\alpha}(x,x')} \\ 
        &\leq& (d^{\alpha}(x',y)+d^{\alpha}(x,y)) e^{-(\min\{d(x,y),d(x',y)\})^{2\alpha}}.
    \end{eqnarray*}
\end{corollary}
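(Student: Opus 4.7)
The plan is to compare $|e^{-a^2} - e^{-b^2}|$ with $a^2-b^2$ via direct integration of $e^{-t}$, then use the factorisation $a^2-b^2 = (a-b)(a+b)$ together with the reverse triangle inequality for the metric $d^\alpha$ (which is a genuine metric since $0<\alpha\leq 1$). This avoids the ad-hoc inequalities $1-e^{-t}\leq t$ and $e^t-1\geq t$ used in the proof of Proposition~\ref{prop:g-alpha} and collapses the two-sided estimate into a single integral identity.

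\textbf{Step 1 (envelope via integration).} Set $a \defeq d^\alpha(x,y)$ and $b \defeq d^\alpha(x',y)$ and assume without loss of generality $a\geq b$. Since $e^{-b^2}-e^{-a^2}=\int_{b^2}^{a^2}e^{-t}\,\d t$ and $e^{-a^2}\leq e^{-t}\leq e^{-b^2}$ throughout $[b^2,a^2]$, we obtain
\begin{equation*}
(a+b)(a-b)\,e^{-a^2}\;\leq\;|e^{-a^2}-e^{-b^2}|\;\leq\;(a+b)(a-b)\,e^{-b^2}.
\end{equation*}
Since $\max\{a^2,b^2\}=(\max\{d(x,y),d(x',y)\})^{2\alpha}$ and analogously for the minimum, this is exactly
\begin{equation*}
(d^\alpha(x,y){+}d^\alpha(x',y))\,|d^\alpha(x,y){-}d^\alpha(x',y)|\,e^{-(\max)^{2\alpha}}\;\leq\;\bigl|e^{-d^{2\alpha}(x,y)}-e^{-d^{2\alpha}(x',y)}\bigr|,
\end{equation*}
with the corresponding upper bound featuring $e^{-(\min)^{2\alpha}}$.

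\textbf{Step 2 (upper bound in the Corollary).} The reverse triangle inequality for $d^\alpha$ gives $|d^\alpha(x,y)-d^\alpha(x',y)|\leq d^\alpha(x,x')$. Dividing the upper half of Step~1 by $d^\alpha(x,x')$ and bounding the resulting ratio $|d^\alpha(x,y)-d^\alpha(x',y)|/d^\alpha(x,x')$ above by $1$ delivers precisely the upper bound claimed in the Corollary. This is the estimate already obtained in part~(i) of the proof of Proposition~\ref{prop:g-alpha}.

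\textbf{Step 3 (lower bound and the main obstacle).} Applying the same division to the lower half of Step~1 yields
\begin{equation*}
\frac{|e^{-d^{2\alpha}(x,y)}-e^{-d^{2\alpha}(x',y)}|}{d^\alpha(x,x')}\;\geq\;(d^\alpha(x,y){+}d^\alpha(x',y))\,e^{-(\max)^{2\alpha}}\cdot\frac{|d^\alpha(x,y){-}d^\alpha(x',y)|}{d^\alpha(x,x')}.
\end{equation*}
To recover the Corollary's lower bound the trailing ratio would have to be $\geq 1$, but by Step~2 it is always $\leq 1$, with equality iff $x'$ lies on a $d^\alpha$-geodesic between $x$ and $y$. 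So the Corollary as written cannot hold for arbitrary $x,x',y\in X$: e.g.\ take $x,x'$ close together but nearly equidistant from~$y$, and the LHS remains of order $(2d^\alpha(x,y))e^{-d^{2\alpha}(x,y)}$ while the middle term vanishes. The estimate becomes correct, and then follows immediately from Step~1, under either of the natural repairs: (a) replace $d^\alpha(x,x')$ in the denominator by $|d^\alpha(x,y)-d^\alpha(x',y)|$ (matching the mean-value content of Step~1), or (b) restrict to $d^\alpha$-collinear configurations in which $d^\alpha(x,y)=d^\alpha(x,x')+d^\alpha(x',y)$, matching the geodesic triples actually used in part~(ii) of the proof of Proposition~\ref{prop:g-alpha} from which the lower estimate was extracted. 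I would therefore restate the Corollary in one of these two forms; Step~1 then constitutes a complete proof.
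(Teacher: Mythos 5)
Your analysis is correct, and you have in fact caught an error in the statement itself. The paper offers no separate proof of this corollary: both inequalities are meant to be read off from the computations in the proof of \cref{prop:g-alpha} in Appendix~B. There, the upper estimate is derived for arbitrary $x,x',y$ (via $1-e^{-t}\leq t$ and the reverse triangle inequality for $d^\alpha$, exactly the content of your Steps~1--2), but the lower estimate is derived \emph{only} for configurations with $d^\alpha(x,y)=d^\alpha(x,x')+d^\alpha(x',y)$, where $\abs{d^\alpha(x,y)-d^\alpha(x',y)}=d^\alpha(x,x')$. The corollary's claim that the two-sided bound is ``valid for all $x,x',y\in X$'' is therefore not supported by the paper's own computation, and your counterexample is decisive: for $x\neq x'$ with $d(x,y)=d(x',y)$ (e.g.\ $y=1/2$, $x=1/4$, $x'=3/4$ in $[0,1]$) the middle quantity vanishes while the left-hand side is strictly positive. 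Either of your repairs --- replacing the denominator by $\abs{d^\alpha(x,y)-d^\alpha(x',y)}$, or restricting to $d^\alpha$-collinear triples --- restores a true statement consistent with what \cref{prop:g-alpha} actually uses.

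Your Step~1 is also a slightly cleaner route than the paper's: writing
\begin{equation*}
e^{-b^2}-e^{-a^2}=\int_{b^2}^{a^2}e^{-t}\,\d t, \qquad a\defeq d^\alpha(x,y),\ b\defeq d^\alpha(x',y),\ a\geq b,
\end{equation*}
and bounding the integrand by its endpoint values yields both halves of the envelope
$(a+b)(a-b)e^{-a^2}\leq \abs{e^{-a^2}-e^{-b^2}}\leq (a+b)(a-b)e^{-b^2}$ in one stroke, whereas the paper obtains them separately from $e^t-1\geq t$ and $1-e^{-t}\leq t$ (which are precisely the two endpoint bounds of that integral). The only cosmetic imprecision is your ``iff'' in Step~3: the ratio equals $1$ also when $x$ lies on a $d^\alpha$-geodesic between $x'$ and $y$, though under your normalisation $a\geq b$ only the case you name occurs. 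Modulo restating the corollary as you propose, your argument is complete.
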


\end{document}